\title{Orthogonal root numbers of tempered parameters}
\author{David Schwein}
\date{5 July 2021}
\email{schwein@umich.edu}
\newcounter{result}
	\newtheorem{conjecture}[result]{Conjecture}
	\newtheorem{corollary}[result]{Corollary}
	\newtheorem{lemma}[result]{Lemma}
	\newtheorem{theorem}[result]{Theorem}
	\newtheorem{theoremx}{Theorem} 
	\newtheorem{lemmax}[theoremx]{Lemma} 
\theoremstyle{definition}
	\newtheorem{definition}[result]{Definition}
	\newtheorem{example}[result]{Example}
	\newtheorem{remark}[result]{Remark}
	\renewcommand\C{\mathbb C}
	\newcommand\F{\mathbb F}
	\renewcommand\G{\mathbb G}
	\newcommand\N{\mathbb N}
	\newcommand\Q{\mathbb Q}
	\newcommand\R{\mathbb R}
	\newcommand\Z{\mathbb Z}
	\newcommand\cal\mathcal
	\renewcommand\frak\mathfrak
	\newcommand\tn\textnormal
	\newcommand\ul\underline
	\DeclareMathOperator\Ad{Ad}
	\DeclareMathOperator\Aut{Aut}
	\DeclareMathOperator\Cl{Cl}
	\DeclareMathOperator\fieldchar{char}
	\DeclareMathOperator\coker{coker}
	\DeclareMathOperator\Hom{Hom}
	\DeclareMathOperator\Ind{Ind}
	\DeclareMathOperator\Lie{Lie}
	\DeclareMathOperator\ord{ord}
	\DeclareMathOperator\sgn{sgn}
	\DeclareMathOperator\WD{WD}
	\DeclareMathOperator\GL{GL}
	\DeclareMathOperator\GPin{Pin}
	\DeclareMathOperator\Pin{Pin}
	\DeclareMathOperator\SO{SO}
	\DeclareMathOperator\Spin{Spin}
	\DeclareMathOperator\SL{SL}
	\newcommand\defeq{\stackrel{\tn{def}}{=}}
	\newcommand\into{\hookrightarrow}
	\newcommand\xto{\xrightarrow}
	\newcommand\rep{{r}}
	\newcommand\HC{1}
\begin{document}
\begin{abstract}
We show that an orthogonal root number
of a tempered $L$-parameter~$\varphi$
decomposes as the product of two other numbers:
the orthogonal root number of the principal parameter
and the value on a certain involution
of Langlands's central character for~$\varphi$.
The formula resolves a conjecture of Gross and Reeder
and computes root numbers of Weil-Deligne representations
arising in the work of Hiraga, Ichino, and Ikeda
on the Plancherel measure.
\end{abstract}

\maketitle
\tableofcontents

\section{Introduction}
To every representation of the absolute Galois group~$\Gamma\!_k$
of a local field~$k$, or more generally,
of its Weil-Deligne group $\WD_k$,
we can attach various local constants,
the $\gamma$-, $L$-, and $\varepsilon$-factors,
each a function of a complex parameter~$s$.
Of these, the $\varepsilon$-factor
has the simplest description
as a function of~$s$:
it is an exponential function,
$s\mapsto a\cdot b^{s-1/2}$.
Understanding this local constant
thus amounts to understanding
the base~$b$ and constant term~$a$.

The base of the $\varepsilon$-factor
(or depending on one's choice of terminology,
its logarithm) is known as the \emph{Artin conductor},
taken as $1$ for $k$ archimedean.
This quantity measures the ramification
of the representation.
Although the Artin conductor has its own subtleties,
especially in the presence of wild ramification,
there are good formulas to compute it
\cite[Chapter~VI]{serre79},
formulas that make precise the sense in which
the conductor measures ramification.

The constant term~$a$ is known as the \emph{root number}.
We denote it here by $\omega$,
an additive function of Weil-Deligne representations.
The choice of exponent $s-1/2$ ensures
that the root number is a complex number of modulus one,
so we can refer to it informally as the sign
of the $\varepsilon$-factor.
This sign is much more subtle than
the Artin conductor.

Here is one example of its subtlety.
The problem of computing a root number
generalizes the problem of computing
the sign of a Gauss sum.
For quadratic Gauss sums,
the sign is determined by a congruence
condition on the modulus,
a classical result of Gauss
and the most difficult of
the basic properties of these sums.
For cubic Gauss sums,
the situation is much more complicated:
there is no congruence condition
to describe the sign,
and in fact, the sign is randomly
and uniformly distributed
on the unit circle
\cite{heath-brown_patterson79}.

In other words, for as simple
a class of Galois representations as
the cubic characters,
the root number exhibits great complexity.
Fortunately, there is a special class
of Galois representations whose root numbers
are more amenable to computation:
the orthogonal representations.
After all, the orthogonal characters
are precisely the quadratic characters.

Using formal properties of root numbers,
it is easy to see that the root number
of an orthogonal representation
is a fourth root of unity,
and the square of the root number
can be described in terms of the determinant
of the representation.
Ultimately, then, computing
an orthogonal root number is a matter
of distinguishing between two square roots.
Deligne calculated the distinction
by expressing the root number
of an orthogonal representation
in terms of the second Stiefel-Whitney class
of the representation \cite{deligne76}.
Stiefel-Whitney classes are a notion from algebraic topology,
but they can be reinterpreted purely in terms
of group cohomology, as the pullback of
a certain universal cohomology class.
In this way, Deligne's formula reduces computing
the root number of an orthogonal representation
to a problem in group cohomology.

In the Langlands program we generalize
the study of Weil-Deligne representations
to the study of $L$-parameters
for a quasi-split reductive $k$-group~$G$.
To extend the definition of the local constants
to this setting we reduce to
the case of the general linear group
by composing an $L$-parameter $\WD_k\to{}^LG$
with a complex representation
$\rep\colon{}^LG\to\GL(V)$
and computing the local constants
of the representation $\rep\circ\varphi$.
Deligne's formula suggests the possibility
of computing the root numbers
$\omega(\varphi,\rep)\defeq\omega(\rep\circ\varphi)$
for orthogonal~$\rep$, in other words,
the \emph{orthogonal root numbers} of~$\varphi$.
The hope is a formula for $\omega(\varphi,\rep)$
that incorporates information about the $L$-parameter~$\varphi$
and the complex orthogonal representation~$\rep$.

In a 2010 paper, the direct inspiration for this article,
Gross and Reeder gave a conjectural formula
for a particular class of orthogonal root numbers
and proved the formula when $G$ is split.
Motivated in part by a conjecture
of Hiraga, Ichino, and Ikeda
on the formal degree of discrete series
\cite[Conjecture~1.4]{hiraga_ichino_ikeda08},
they took the adjoint representation
$\Ad\colon{}^LG\to\GL(\hat{\frak g})$,
an orthogonal representation,
and set about computing $\omega(\varphi,\tn{Ad})$.
Their conjectural answer has three factors.

First, there is a recipe, due to Langlands,
that constructs from the $L$-parameter $\varphi$
a character $\chi_\varphi$ of the center of~$G$.
At the same time, we can
conjecturally assign to $\varphi$
a finite set $\Pi_\varphi$ of 
smooth irreducible representations of~$G(k)$.
The set $\Pi_\varphi$ is called
the \emph{$L$-packet} of~$\varphi$
and the assignment $\varphi\mapsto\Pi_\varphi$
is called the \emph{local Langlands correspondence}.
It is expected that all elements of $\Pi_\varphi$
have the same central character,
and further, that this character is $\chi_\varphi$.

Second, Gross and Reeder evaluated this central character
on a certain \emph{canonical involution} $z_\tn{Ad}$
in the center of~$G$, defined as
the value of the sum of the positive coroots on~$-1$.

Third, when $k$ is nonarchimedean
the $L$-group admits a particular distinguished parameter
called the \emph{principal parameter}
$\varphi_\tn{prin}\colon\WD_k\to{}^LG$.
This parameter is trivial on the Weil group,
and its restriction to the Deligne $\SL_2$ corresponds,
via the Jacobson-Morozov theorem,
to the sum of elements in a pinning of~$\widehat G$,
a nilpotent element of the Lie algebra.
The $L$-packet of the principal parameter
captures the Steinberg representation.
When $k$ is archimedean,
we define the principal parameter
to be the trivial crossed homomorphism
$W_k\to\widehat G$.

\begin{conjecture}[{\cite[Conjecture~8.3]{gross_reeder10}}]
\label{thm2}
If $k$ is nonarchimdean of characteristic zero,
the center of~$G$ is anisotropic,
and the parameter $\varphi\colon\WD_k\to{}^LG$
is discrete then
\[
\frac{\omega(\varphi,\tn{Ad})}%
{\omega(\varphi_\tn{prin},\tn{Ad})}
= \chi_\varphi(z_\tn{Ad}).
\]
\end{conjecture}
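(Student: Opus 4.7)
The plan is to apply Deligne's cohomological formula for orthogonal root numbers, which expresses $\omega(\rho)$ for an orthogonal Weil-Deligne representation $\rho$ in terms of the second Stiefel-Whitney class $w_2(\rho)$ together with $\det\rho$. Applied to $\Ad\circ\varphi$ and $\Ad\circ\varphi_{\tn{prin}}$, whose determinant characters coincide (the determinant of $\Ad$ factors through a character of $\Gamma\!_k$ independent of the parameter), the ratio in the statement reduces to a difference of Stiefel-Whitney classes. Concretely, if $E\to{}^LG$ denotes the $\{\pm1\}$-central extension obtained by pulling back the Pin extension $\Pin(\hat{\frak g})\to O(\hat{\frak g})$ along $\Ad$, then Deligne's formula identifies
\[
\frac{\omega(\varphi,\Ad)}{\omega(\varphi_{\tn{prin}},\Ad)}
= \varphi^*[E]\cdot\varphi_{\tn{prin}}^*[E]^{-1},
\]
where both sides live in $\{\pm1\}$ after applying the natural pairing with the fundamental class of~$\WD_k$.

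The first key step is to produce a splitting of $E$ over the principal $\SL_2\subset\widehat G$. By the Jacobson-Morozov theorem the principal nilpotent lies in a canonical $\SL_2$-triple built from the pinning of~$\widehat G$, and one should check by direct computation with the Clifford algebra of $\hat{\frak g}$ that the composition of this embedding with $\Ad$ admits a canonical lift to $\Pin(\hat{\frak g})$. This yields $\varphi_{\tn{prin}}^*[E]=1$, reducing the conjecture to showing $\varphi^*[E]=\chi_\varphi(z_{\tn{Ad}})$.

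The second key step is to compute $[E]$ on the centralizer of~$\varphi$, and ultimately on the center $Z(\widehat G)^{\Gamma\!_k}$ through which Langlands's central character is defined. For a central element $z=\beta(-1)$ with $\beta$ a cocharacter into the center, $\Ad(z)$ acts on $\hat{\frak g}$ as $\pm 1$ on root spaces indexed by parity of $\langle\alpha,\beta\rangle$, and its Pin lift differs from the identity by a sign counting, modulo $2$, the pairs of root spaces on which $\Ad(z)$ acts by $-1$. A combinatorial count in the root datum shows that the resulting character of $Z(\widehat G)^{\Gamma\!_k}$ equals evaluation at $z_{\tn{Ad}}=\bigl(\sum_{\alpha>0}\alpha^\vee\bigr)(-1)$. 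Combined with Langlands's recipe for $\chi_\varphi$ via the abelianized parameter, this completes the identification.

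The main obstacle, I expect, is the sign bookkeeping in both steps: the canonical splitting of $E$ over the principal $\SL_2$ must be produced with the normalization that makes Deligne's formula output the correct ambient constant, and the character of $Z(\widehat G)^{\Gamma\!_k}$ extracted from the Pin pullback must be shown to match evaluation at $z_{\tn{Ad}}$ rather than any twist by the determinant or by a quadratic character of $\Gamma\!_k$. Both are in principle combinatorial calculations in the root datum, but they must be carried out in a manner that respects the Galois action, so as to extend from the center to the full image of the Weil group under~$\varphi$; this is what ultimately makes the discreteness (or, more generally, temperedness) of~$\varphi$ enter the argument, via the compactness of $\varphi(W_k)$ modulo the center.
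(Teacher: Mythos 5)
Your step 1 contains the essential gap. Splitting the pulled-back pin extension over the principal $\SL_2\subset\widehat G$ is automatic ($\SL_2(\C)$ is simply connected), but it does not give $\varphi_\tn{prin}^*[E]=1$, because it ignores the Weil-group component of the principal parameter: by definition $\varphi_\tn{prin}(w)=1\rtimes w$, so $\Ad\circ\varphi_\tn{prin}$ restricted to $W_k$ is the Galois representation on $\hat{\frak g}$, and for non-split quasi-split $G$ its second Stiefel--Whitney class (equivalently the ratio $\omega(\varphi_\tn{prin},\Ad)/\omega(\varphi_\tn{prin},\det\Ad)$) is in general nontrivial. For instance, for quasi-split $SU_5$ the outer involution acts on $\hat{\frak g}$ with a $14$-dimensional $(-1)$-eigenspace, so $w_2=\binom{14}{2}w_1(\eta)^2=w_1(\eta)^2$, which is nonzero for suitable quadratic $\eta$. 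Thus your reduction ``$\varphi^*[E]=\chi_\varphi(z_\tn{Ad})$'' drops exactly the Galois factor that the conjecture keeps on the left-hand side; the paper never kills this term, but rather identifies $\varphi^*p^*(\Ad|_{W_k})^*(c_\tn{pin})$ with the principal-parameter contribution (and explicitly regards computing $\omega(\varphi_\tn{prin},\Ad)$ as a separate, hard problem). Relatedly, your way of disposing of the Deligne $\SL_2$ is not the relevant mechanism: what is needed is that orthogonal Weil--Deligne root numbers are unchanged by restriction to $W_k$, after which one works with $W_k$ alone.

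Step 2 is also not yet a proof. First, there is a duality confusion: $z_\tn{Ad}=\bigl(\sum_{\alpha>0}\alpha^\vee\bigr)(-1)$ lies in $Z(G)(k)$, not in $Z(\widehat G)^{\Gamma\!_k}$, so ``the resulting character of $Z(\widehat G)^{\Gamma\!_k}$ equals evaluation at $z_\tn{Ad}$'' does not typecheck; the combinatorial computation you describe only produces the spin character $e_\tn{Ad}$ of $\pi_1(\widehat G)$ (one ingredient). Second, and more seriously, nothing in your outline explains why the class $\varphi^*[E]\in\tn H^2(W_k,\{\pm1\})$ factors as (Galois term)$\,\times\,$(a term depending only on the center), nor how a class in Weil-group cohomology becomes the value of the automorphic character $\chi_\varphi$ at a rational central element. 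In the paper this is where all the work lies: a cohomological lemma for semidirect products splits $\tn H^2_\tn{Borel}({}^LG,\{\pm1\})$ into a $W_k$-part and a $\widehat G$-part, producing the factorization before any pullback; then $\tn H^2(W_k,X^*(Z))$ is identified with characters of the Harish--Chandra subgroup $Z(k)^\HC$ via the Langlands correspondence for tori and the universal-cover coboundary, a nontriviality statement is proved using the structure of $\tn H^3(W_k,\Z)$, and a reduction to connected center matches Langlands's construction of $\chi_\varphi$. None of this is ``combinatorics in the root datum respecting the Galois action.'' Finally, temperedness does not enter through compactness of $\varphi(W_k)$ modulo center (that is discreteness); it enters because it makes $\Ad\circ\varphi$ bounded, which is what guarantees an $\R$-structure and hence the validity of the Stiefel--Whitney reformulation of Deligne's theorem.
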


The goal of this article is to verify Gross and Reeder's conjecture.
Actually, we will prove a more general version
that relaxes the base field to an arbitrary local field,
the adjoint representation
to an arbitrary orthogonal representation,
and discreteness of the parameter to temperedness.
We generalize Gross and Reeder's
canonical involution $z_\tn{Ad}$ to an involution~$z_\rep$
(see \Cref{sec:cc:inv}) that depends only on
the restriction of~$\rep$ to~$\widehat G$.

The original motivation for our generalization
was to compute the adjoint $\varepsilon$-factor
arising in a conjecture of Hiraga, Ichino, and Ikeda
describing the Plancherel measure
\cite[Conjecture~1.5]{hiraga_ichino_ikeda08},
of which the formal degree conjecture
that motivated Gross and Reeder is a special case.
In the more general conjecture,
the adjoint representation of~${}^LG$ is replaced
by a certain ``relative adjoint representation''
of the $L$-group of a Levi.
From this perspective it is natural,
and in the end costs little,
to relax the adjoint representation
to an arbitrary orthogonal representation.

\begin{theoremx} \label{thm1}
Let $\rep$ be an orthogonal representation of~${}^LG$
and $\varphi\colon\WD_k\to{}^LG$ a tempered parameter.
Then
\[
\frac{\omega(\varphi,\rep)}%
{\omega(\varphi_\tn{prin},\rep)}
= \chi_\varphi(z_\rep).
\]
\end{theoremx}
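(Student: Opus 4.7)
The plan is to apply Deligne's cohomological formula for orthogonal root numbers, referenced in the introduction, which expresses $\omega(V)$ for an orthogonal Weil-Deligne representation $V$ in terms of the pullback of the universal second Stiefel-Whitney class along the classifying map $\WD_k \to O(V)$, together with a known determinantal correction. Applied to $\rep\circ\varphi$ and $\rep\circ\varphi_\tn{prin}$, which share the same target, the ratio $\omega(\varphi,\rep)/\omega(\varphi_\tn{prin},\rep)$ becomes an element of $H^2(\WD_k,\{\pm 1\})$ built by differencing two $w_2$-pullbacks. The target quantity $\chi_\varphi(z_\rep)$ is itself cohomological, being the value of the cocycle $\varphi$ pushed to the center of~$\widehat G$ and evaluated on a specific involution, so the problem reduces to matching two $\pm 1$-valued classes.

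The second step is a reduction from tempered to discrete parameters via Levi subgroups. A tempered $\varphi$ factors through ${}^LM$ for some Levi $M\subseteq G$ as a discrete parameter $\varphi_M$, and all three objects in the identity behave functorially under this factorization: root numbers are additive along the branching $\rep|_{{}^LM}=\bigoplus_i \rep_i$, Langlands's central character is compatible with Levi embeddings, and $z_\rep$, which by construction depends only on $\rep|_{\widehat G}$, relates to the Levi versions $z_{\rep_i}$ via the inclusion of coroot systems $\Phi^\vee(M)\subseteq\Phi^\vee(G)$. The principal parameters of $G$ and $M$ differ by a pinning-dependent sign that must be tracked. Once these compatibilities are established, the theorem on $G$ for tempered $\varphi$ follows from the discrete case on Levis.

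For a discrete parameter I would decompose $\rep\circ\varphi$ into weight spaces for a maximal torus $\widehat T\subseteq\widehat G$ containing the image of the bounded part of $\varphi$, and compute Stiefel-Whitney classes weight by weight. Each pair $\{\lambda,-\lambda\}$ of nontrivial weights contributes a quadratic symbol built from $\varphi$ evaluated at the associated cocharacter, while the principal parameter's $\SL_2$ factor contributes a term whose cancellation against the corresponding part of $\rep\circ\varphi$ leaves behind an expression involving the half-sum of positive coroots. Reassembling the contributions, one recovers $\chi_\varphi$ evaluated on a central element built from a canonical sum of weight cocharacter lifts, and the paper's construction of $z_\rep$ via the pinning of $\widehat G$ should be tailored precisely so that this element coincides with $z_\rep$.

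The main obstacle, I expect, is the final identification of the central element emerging from the weight-by-weight computation with $z_\rep$ as defined in the paper. The computation naturally produces a sum of cocharacter lifts, but matching it to the specific involution built from $\rep|_{\widehat G}$ requires careful sign bookkeeping through the $\Pin$-versus-$\SO$ obstruction attached to each weight pair. A secondary difficulty is that the archimedean and nonarchimedean cases must be treated uniformly: the Deligne $\SL_2$ is present only in the latter, so its interaction with the orthogonal structure of $\rep$ has to be packaged in a way that makes the archimedean specialization (where $\varphi_\tn{prin}$ is trivial) an automatic degeneration rather than a separate argument.
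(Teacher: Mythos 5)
Your opening step (Deligne's formula, recast cohomologically) matches the paper's, but the core of your plan has genuine gaps. First, the weight-by-weight computation for a discrete parameter presupposes that the bounded part of $\varphi$ has image inside a maximal torus $\widehat T\subseteq\widehat G$; for discrete (indeed for most tempered) parameters this is false --- discreteness means precisely that the image lies in no proper Levi, let alone a torus --- so $\rep\circ\varphi$ admits no decomposition into $\widehat T$-weight spaces and the per-weight quadratic symbols you want to sum never materialize. Second, the Levi reduction creates a problem the paper never faces: the denominator in the theorem is the root number of the principal parameter of $G$, which does not factor through ${}^LM$, and its relation to the Levi's principal parameter is not ``a pinning-dependent sign''; no such comparison is established or obviously available, and nothing in your sketch supplies it. Third, your expectation that the Deligne $\SL_2$ of $\varphi_\tn{prin}$ produces the half-sum term is misplaced: orthogonal root numbers are unchanged by restriction to the Weil group, so the $\SL_2$ factor is invisible throughout; in the paper the quantity $2\rho_\rep$ arises from the spin character of $\rep|_{\widehat G}$, not from the principal $\SL_2$, and $\omega(\varphi_\tn{prin},\rep)$ is deliberately left uncomputed as the ``Galois factor.''

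The paper's actual route avoids any case analysis in $\varphi$. It proves a single identity in $\tn H^2_\tn{Borel}({}^LG,\{\pm1\})$, namely $\rep^*(c_\tn{pin})=e_{\rep,*}(c_G)\cdot p^*\rep|_{W_k}^*(c_\tn{pin})$, by a general lemma on extensions and semidirect products (\Cref{thm4}); the parameter enters only by pullback, and temperedness is used only to guarantee that $\rep\circ\varphi|_{W_k}$ is bounded, which is what the group-cohomological form of Deligne's theorem (\Cref{thm19}) requires. The term $p^*\rep|_{W_k}^*(c_\tn{pin})$ becomes $\omega(\varphi_\tn{prin},\rep)$ (up to determinant factors, which cancel because $\det\rep$ is trivial on the connected group $\widehat G$) precisely because $\rep\circ\varphi_\tn{prin}|_{W_k}=\rep|_{W_k}\circ p\circ\varphi$. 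Finally, identifying $\sgn\bigl(\varphi^*e_{\rep,*}(c_G)\bigr)$ with $\chi_\varphi(z_\rep)$ is not mere sign bookkeeping: it requires the spin-character formula $e_\rep(\lambda)=(-1)^{\langle\lambda,2\rho_\rep\rangle}$, the identification of $\tn H^2(W_k,X^*(Z))$ with the character group of the Harish-Chandra subgroup $Z(k)^\HC$, a nontriviality statement for the pushforward to $\{\pm1\}$-coefficients (resting on the structure of $\tn H^3(W_k,\cdot)$), and Langlands's construction of $\chi_\varphi$ via an embedding into a group with connected center. Your proposal offers no substitute for these steps, so the final identification ``should coincide with $z_\rep$'' remains an unsupported hope rather than an argument.
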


The \namecref{thm1} does not fully compute
the orthogonal root number $\omega(\varphi,\rep)$:
instead, it disentangles
the root number into an automorphic factor,
$\chi_\varphi(z_\rep)$,
and a Galois factor, $\omega(\varphi_\tn{prin},\rep)$.
If one wanted to use \Cref{thm1}
to pin down an orthogonal root number precisely,
it seems that the main challenge would be to compute
the orthogonal root number of the principal parameter.
In general I expect no better resolution to this problem
than the rough answer provided by Clifford theory,
though for special classes of representations,
such as the adjoint representation
\cite[Equation~(21)]{gross_reeder10},
it might be possible to say more.

Although our work is informed by
the local Langlands correspondence,
everything here takes place on the Galois side.
A stronger version of the theorem would assert
that $\chi_\varphi(z_\rep)$ is actually
the value on~$z_\rep$ of the central character
of the $L$-packet of~$\varphi$;
this is much more difficult
because it requires some knowledge of $L$-packets.
Lapid proved the stronger version of \Cref{thm1}
for generic irreducible representations
of certain classical groups \cite{lapid04}.

Gross and Reeder proved \Cref{thm2} for split~$G$
by an argument in group cohomology.
This article is an outgrowth of an observation
that their argument can be generalized
in various directions.

To relax the split assumption in Gross and Reeder's proof,
we generalize, in \Cref{thm4} of \Cref{sec:gc},
the basic lemma from group cohomology
that underlies their proof,
taking into account the Galois action on~$\widehat G$.
Since the pin extension of a complex orthogonal group
is not topologically split,
we must use Borel cohomology here
instead of continuous (group) cohomology.
With this modification,
\Cref{thm4} proves equal
two particular Borel cohomology classes
in $\tn H_\tn{Borel}^2({}^LG,\{\pm1\})$.
\Cref{thm1} then follows from the equation
by pullback along the parameter~$\varphi$,
once we properly identify the factors of
the pulled-back equation.
Most of this article is devoted to
identifying these factors.

Two of the factors are root numbers,
and their recognition as such passes through Deligne's theorem.
Gross and Reeder already reformulated the theorem
in the language of group cohomology
for determinant-one orthogonal representations of Galois type,
and it is mostly a matter of collecting
definitions from the algebraic topology literature,
in \Cref{sec:sw}, to extend their reformulation
to tempered orthogonal representations.

The third factor of the pulled-back equation
is the value of the central
character~$\chi_\varphi$ on the involution~$z_\rep$.
Identifying this factor requires several detours
that are surely well known to experts in the field.

Any spin representation of a complex reductive group
gives rise to a character of its topological
fundamental group.
The first detour, in \Cref{sec:spin},
is to compute this character.
Here we offer a small correction
to an exercise of Bourbaki.

The second detour, in \Cref{sec:cc}, is to
compute the second cohomology group of the Weil group
with coefficients in the character lattice of a $k$-torus,
generalizing a standard computation for
the absolute Galois group.
It turns out that this cohomology group
is the character group of the Harish-Chandra
subgroup of the torus.
In \Cref{app:karpuk} we extend these results,
for $k$ nonarchimedean of characteristic zero,
to any finite type $k$-group of multiplicative type,
using a generalization of Tate duality
due to Karpuk.

Finally, in the brief \Cref{sec:synth}
we weave together these disparate threads
to prove \Cref{thm1},
and then explain the connection with the conjectures
of Hiraga, Ichino, and Ikeda.

\subsection{Notational conventions}
\label{sec:intro:not}
Let $k$ be a nonarchimedean local field
of residue characteristic~$p$
and characteristic different from~$2$.
The latter assumption is no great loss because
\Cref{thm1} is trivially satisfied if $+1 = -1$.
Let $\Gamma\!_k$ be the absolute Galois group of~$k$,
let $W_k$ be the Weil group of~$k$,
and let 
\[
\WD_k \defeq
\begin{cases}
W_k &\tn{if $k$ is archimedean} \\
W_k\times\SL_2(\C) &\tn{if not}
\end{cases}
\]
be the Weil-Deligne group of~$k$.
In practice we can immediately forget
$\WD_k$ and work with $W_k$ because
root numbers of orthogonal Weil-Deligne
representations are unaffected
by restriction to the Weil group
\cite[Section~2.3]{gross_reeder10}.
For $k$ nonarchimedean, let $I_k\subset W_k$
be the inertia subgroup.

We reserve the letter~$G$ for a group,
of two kinds: either a reductive group
or a topological group.
When $G$ is a topological group,
we assume it to be Hausdorff.
When $G$ is a reductive group,
we assume it to be connected and quasi-split.

To best align with the statement of the key \Cref{thm4}
we work with the Weil form of the $L$-group,
${}^LG\defeq\widehat G\rtimes W_k$.
The choice of the Weil form over the Galois form
is not essential because in practice,
we can replace the Weil-group factor
of this semidirect product with the Galois group
of any extension of~$k$ that is
large enough to contain the splitting field of~$G$
and to trivialize the $L$-parameter
relevant to the problem at hand.

For us, a \emph{representation} of the $L$-group~${}^LG$
is a representation in the sense of Borel's
Corvallis article \cite[(2.6)]{borel_corvallis2},
that is, a finite-dimensional complex vector space~$V$
together with a continuous homomorphism 
${}^LG\to\GL(V)$ whose restriction to
$\widehat G$ is a morphism of complex algebraic groups.
Similarly, a \emph{representation}
of a complex reductive group is assumed algebraic.

In general, the $\varepsilon$-factor, and hence the root number,
depends not only on a Weil-Deligne representation
but also on a nontrivial additive character of~$k$
and a Haar measure on~$k$.
Since there are formulas that explain
the dependence of these factors on the character and the measure,
nothing is lost in fixing them.
We follow Gross and Reeder in computing
$\varepsilon$-factors with respect to
the Haar measure that assigns $\cal O_k$ measure one and
an additive character that is trivial on $\cal O_k$
and nontrivial on the inverse of some uniformizer.

In this article we use three kinds of cohomology:
the singular cohomology of a topological space,
denoted by $\tn H^\bullet_\tn{sing}$;
the continuous cohomology of a topological group,
denoted by $\tn H^\bullet$;
and the Borel cohomology of a topological group,
denoted by $\tn H_\tn{Borel}^\bullet$,
whose definition we review in \Cref{sec:gc:bc}.

Given any group~$A$ of order two,
let $\sgn\colon A\to\{\pm1\}$ denote
the canonical isomorphism
to the group~$\{\pm1\}$ of order two.

\subsection{Acknowledgments}
I am grateful to Jack Carlisle
for discussing the cohomology of classifying spaces,
to Peter Dillery for discussing
the cohomology of the Weil group,
to my advisor, Tasho Kaletha,
for discussing the contents of this article
and providing detailed feedback on it,
and to Karol Koziol for pointing me
to Flach's article \cite{flach08}.

This research was supported by
the National Science Foundation RTG
grant DMS 1840234.

\section{Group cohomology} \label{sec:gc}
This section collects general facts on group cohomology
that inform the rest of this article.
In \Cref{sec:gc:ge}, we review the relationship
between extensions and cohomology for discrete groups.
In \Cref{sec:gc:bc} we explain how the relationship
behaves in the presence of topology.
And in \Cref{sec:gc:kl} we state and prove
the main cohomological lemma, \Cref{thm4},
underlying the proof of \Cref{thm1}.
The remaining sections of the article will
flesh out the connection
between \Cref{thm1} and \Cref{thm4}.

\subsection{Group extensions} \label{sec:gc:ge}
In this subsection we work in the category of discrete groups.
An \emph{extension} of the group~$G$ by the group~$A$
is an exact sequence
\begin{center}
\begin{tikzcd}
1 \rar & A \rar & E \rar & G \rar & 1,
\end{tikzcd}
\end{center}
permitting one to identify $A$ with a subgroup of~$E$
and $G$ with the quotient of~$G$ by this subgroup.
We call $A$, $E$, and $G$, the first, second, and third
terms of the extension, respectively.
We always assume that the extension is \emph{abelian},
in other words, that $A$ is an abelian group.
In fact, in our application all extensions
are \emph{central}, that is, having $A$ as a central
subgroup of~$G$.
Since $A$ is abelian, the conjugation action
of~$G$ on~$E$ descends to an action of~$G$ on~$A$:
that is, $A$ is a $G$-module.
Conversely, starting from a $G$-module~$A$,
an extension of $G$ by~$A$
is an extension of $G$ by the abelian group~$A$
with the property that the $G$-action on~$A$
induced by the extension agrees with the given action.

A \emph{morphism} of extensions is a commutative diagram
\begin{center}
\begin{tikzcd}
1 \rar & A \dar\rar & E \dar\rar & G \dar\rar & 1 \\
1 \rar & A' \rar & E' \rar & G' \rar & 1.
\end{tikzcd}
\end{center}
As for extensions, we can speak of the first, second,
and third terms of a morphism,
namely, the maps $A\to A'$, $E\to E'$, and $G\to G'$,
respectively.
The isomorphisms in this category 
are precisely the morphisms whose terms
are all isomorphisms.
An easy diagram chase shows that a morphism
is an isomorphism as soon as its first
and third terms are isomorphisms.
For this reason, we often restrict attention
to the isomorphisms whose first and third terms
are equalities;
we call such an isomorphism an \emph{equivalence}.
Besides the isomorphisms,
there are two kinds of morphisms of interest.

First, given two $G$-modules $A$ and~$A'$
and a $G$-equivariant
homomorphism $\alpha\colon A\to A'$,
we can construct the morphism of extensions
\begin{center}
\begin{tikzcd}
1 \rar & A \dar{\alpha}\rar & E \dar\rar & G \dar[equals]\rar & 1 \\
1 \rar & A' \rar & \alpha_*(E) \rar & G \rar & 1
\end{tikzcd}
\end{center}
in which $\alpha_*(E)$ is the cokernel
of the map $A\to A'\rtimes E$
sending $a$ to $\alpha(a)a$.
We call the bottom extension (or sometimes, its second term)
the \emph{pushout} of the top extension.%
\footnote{Our pushout is usually not isomorphic to
the pushout in the category of groups.
The category-theoretic pushout is the amalgamated free product.}
The pushout morphism is universal
in the sense that any morphism with first term $\alpha$ 
from the top extension to an extension
of $G$ by~$A'$ factors through the pushout morphism.

Second, given a homomorphism $\gamma\colon G'\to G$,
we can construct the morphism of extensions
\begin{center}
\begin{tikzcd}
1 \rar & A \dar[equals]\rar & \gamma^*(E) \dar\rar &
	G' \dar{\gamma}\rar & 1 \\
1 \rar & A \rar & E \rar & G \rar & 1
\end{tikzcd}
\end{center}
in which $\gamma^*(E)$ is the pullback (of sets,
hence of groups) of the right square.
We call the top extension (or sometimes, its second term)
the \emph{pullback} of the bottom extension.
The pullback morphism is universal in the sense that
any morphism with third term $\gamma$
from an extension of $A$ by~$G'$
to the bottom extension factors through
the pullback morphism.

The theory of group extensions is relevant to us
because of its connection to group cohomology,
which relates to root numbers, in turn,
by Deligne's theorem.

Given an extension
\begin{center}
\begin{tikzcd}
1 \rar & A \rar & E \rar & G \rar & 1,
\end{tikzcd}
\end{center}
let $s\colon G\to E$ be a set-theoretic section of $E\to G$.
The formula
\[
z_s(g,g')=s(g)s(g')s(gg')^{-1}
\]
defines a $2$-cocyle $z_s\in\tn Z^2(G,A)$.
The cohomology class of the resulting cocycle
is independent of the choice of section:
any two such sections differ by a function $G\to A$,
and the coboundary of this function exhibits
a cohomology between the cocycles.

Conversely, given a $2$-cocycle $z\in\tn Z^2(G,A)$,
define the group extension
\begin{center}
\begin{tikzcd}
1 \rar & A \rar & A\boxtimes_zG \rar & G \rar & 1
\end{tikzcd}
\end{center}
in which $A\boxtimes_zG=A\times G$ with multiplication
\[
(a,g)\cdot(a',g') = (a\cdot{}^ga\cdot z(g,g'),gg').
\]
The equivalence class of the resulting extension
depends only on the cohomology class of the cocycle:
any function $G\to A$ whose coboundary exhibits
a cohomology between two different cocycles
gives rise to an equivalence of the corresponding extensions.

In summary, given a group $G$ and a $G$-module~$A$,
we have constructed a canonical bijection
between equivalence classes
of extensions of~$G$ by~$A$ 
and the cohomology group~$\tn H^2(G,A)$.
The bijection is compatible with pushforward
and pullback of cohomology classes and extensions.

The dictionary between extensions and cohomology classes
nicely answers, or rather, reformulates,
a natural question in the theory of group extensions:
when does the diagram
\begin{center}
\begin{tikzcd}
1 \rar & A \dar{\alpha}\rar & E \rar & G \dar{\gamma}\rar & 1 \\
1 \rar & A' \rar & E' \rar & G' \rar & 1
\end{tikzcd}
\end{center}
extend to a morphism of extensions?
The answer to the question is a special case
of our key lemma, \Cref{thm4},
and is also used in the proof of the lemma.

To answer the question, use the universal
properties of pullback and pushforward
to extend the candidate morphism to the diagram
\begin{center}
\begin{tikzcd}
1 \rar & A \dar{\alpha}\rar & E \dar\rar & G \dar[equals]\rar & 1 \\
1 \rar & A' \dar[equals]\rar & \alpha_*(E) \rar & G \dar[equals]\rar & 1 \\
1 \rar & A' \dar[equals]\rar & \gamma^*(E') \dar\rar & G \dar{\gamma}\rar & 1 \\
1 \rar & A' \rar & E' \rar & G' \rar & 1
\end{tikzcd}
\end{center}
It follows that the original diagram can be extended
to a morphism if and only if the middle extensions
in the diagram are isomorphic.
In other words, letting $c\in\tn H^2(G,A)$
and $c'\in\tn H^2(G',A')$ denote the cohomology classes
classifying the top and bottom extensions,
the original diagram can be extended to a morphism
if and only if
\[
\alpha_*(c) = \gamma^*(c') 
\qquad\tn{in $\tn H^2(G,A')$.}
\]

\subsection{Borel cohomology} \label{sec:gc:bc}
A sequence of topological groups
\begin{center}
\begin{tikzcd}
1 \rar & A \rar & E \rar & G \rar & 1
\end{tikzcd}
\end{center}
is a \emph{topological extension} of~$G$ by~$A$
if $A\to E$ is a closed subgroup
and the induced map from the cokernel
of $A\to E$ to~$G$,
where the source has the quotient topology,
is an isomorphism.
The pullback and pushout constructions
from \Cref{sec:gc:ge} work just as well
in this setting, provided all maps
in question are continuous.

Classifying topological extensions
is more subtle than classifying 
extensions of discrete groups,
however, because not every continuous surjection
of topological spaces admits a continuous section.
If we were to carry out the work of \Cref{sec:gc:ge}
in the category of topological groups,
where all maps are required to be continuous,
we would find that the continuous cohomology
$\tn H^2(G,A)$ classifies extensions of $G$ by $A$
that are \emph{topologically split}, that is,
whose second term is a direct product $G\times A$
as a topological space.
The collection of topologically split extensions
is much too small for most purposes.
For instance, the universal-cover group extension of a Lie group
is never topologically split,
but we need a theory that can see the spin extension
of the special orthogonal group because of
its great relevance to the study of Stiefel-Whitney classes.

To capture the topological extensions that
are not topologically split one must enlarge
the continuous cohomology group.
The correct enlargement is known as
\emph{Borel cohomology},%
\footnote{Borel cohomology is named after
Émile, not Armand.
It is sometimes also called
\emph{Moore cohomology}.}
a variant of discrete group cohomology
in which one requires that cochains
be Borel measurable.
I refer the reader
to Stasheff's survey article
\cite{stasheff78}
as well as Moore's papers on the subject
\cite{moore64a,moore64b,moore76a,moore76b}
for more information on Borel cohomology.

\begin{theorem} \label{thm22}
Let $G$ and $A$ be separable 
locally compact groups with $A$ abelian.
There is a canonical natural isomorphism
between $\tn H_\tn{Borel}^2(G,A)$
and the set of equivalence classes
of topological extensions of $G$ by~$A$.
\end{theorem}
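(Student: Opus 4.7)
The plan is to mimic the construction from \Cref{sec:gc:ge} for discrete groups, replacing arbitrary set-theoretic choices by Borel-measurable ones. Two inputs from the topological setting are essential. First, a classical theorem of Mackey guarantees that every continuous surjection of separable locally compact groups admits a Borel-measurable section, so the cocycle construction goes through without loss. Second, Moore's main structural theorem on Borel cohomology shows that a standard Borel group equipped with a Borel-measurable multiplication carries, under mild hypotheses, a unique Polish topology making the multiplication continuous and recovering the given Borel structure; this is what allows the reverse construction to produce a genuine topological extension rather than merely a Borel one.

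For the forward direction, I would start from a topological extension and choose, by Mackey's theorem, a Borel section $s\colon G\to E$. The formula $z_s(g,g')=s(g)s(g')s(gg')^{-1}$ defines a Borel $2$-cocycle on $G$ with values in $A$, and two such sections differ by a Borel function $G\to A$ whose coboundary is the difference of the associated cocycles. Hence the resulting class in $\tn H_\tn{Borel}^2(G,A)$ depends only on the topological extension, and in fact only on its equivalence class, since an equivalence of extensions transports one Borel section to another.

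For the reverse direction, given a Borel $2$-cocycle~$z$, I would form the set $A\boxtimes_zG\defeq A\times G$, equipped with the twisted multiplication $(a,g)\cdot(a',g')=(a\cdot{}^ga'\cdot z(g,g'),gg')$, producing a separable Borel group. Moore's theorem then supplies a unique compatible Polish topology, and one checks that $A$ is a closed subgroup with quotient topologically $G$, yielding a topological extension; cohomologous cocycles produce equivalent extensions via the same Borel function that exhibits the cohomology. This step I expect to be the main obstacle, because verifying the topological, rather than merely measure-theoretic, properties of $A\boxtimes_zG$ is where the content lies and where Moore's deep theorem is indispensable.

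The two constructions are mutually inverse by the same diagram chase as in the discrete case, once a Borel section of $A\boxtimes_zG\to G$ is chosen to recompute the cocycle. Naturality with respect to a continuous $G$-equivariant homomorphism $\alpha\colon A\to A'$ and a continuous homomorphism $\gamma\colon G'\to G$ follows from the universal properties of the pushout and pullback constructions recalled in \Cref{sec:gc:ge}, which carry over verbatim to the topological setting by reapplying Mackey's theorem to obtain the Borel section needed to compare the relevant cocycles.
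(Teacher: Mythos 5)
Your proposal is correct and follows essentially the same route as the paper, which simply delegates both key inputs to Mackey's thesis: the existence of a Borel-measurable section (Mackey's Th\'eor\`eme~3) for the forward direction, and the existence and uniqueness of a compatible locally compact topology on $A\boxtimes_zG$ (Mackey's Th\'eor\`eme~2) for the reverse. The only discrepancy is attribution: the topologization result you credit to Moore is, in the paper's citation, also due to Mackey.
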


\begin{proof}
Mackey proved this result in his thesis \cite{mackey57}.
The same construction as in \Cref{sec:gc:ge}
yields the bijection,
with the following additional argument.
Given an extension of $G$ by~$A$,
one must find a section 
whose associated cocycle is Borel measurable.
This is Mackey's Théorème~3, and
its proof shows that the section
need only be Borel-measurable.
Conversely, given a Borel-measurable
cocycle $z:G\times G\to A$,
one must show endow the extension
$A\boxtimes_z G$ with a locally compact topology
making it an extension of topological groups.
This is Mackey's Théorème~2.
A uniqueness statement in that theorem
ensures that the topology on the extension
can be recovered from any measurable
cocycle classifying it.
\end{proof}

\begin{example} \label{thm31}
Using the fact that $\C^\times$ is connected
and $\{\pm1\}$ is discrete,
it is easy to show that the continuous cohomology group
$\tn H^2(\C^\times,\{\pm1\})$ is trivial.
On the other hand, since $\C^\times=W_\C$,
we know \cite[(4.2)]{deligne76}
that $\tn H^2_\tn{Borel}(\C^\times,\{\pm1\})\simeq\{\pm1\}$.
The nontrivial cohomology class here represents
the topologically non-split squaring extension
\begin{center}
\begin{tikzcd}
1 \rar &
\{\pm1\} \rar &
\C^\times \rar{z\mapsto z^2} &
\C^\times \rar &
1.
\end{tikzcd}
\end{center}
\end{example}

Since Borel cohomology is not widely used
in the Langlands program,
we point out several relevant properties.

First, in the nonarchimedean case,
the pullback of a Borel cocycle
along an $L$-parameter
is automatically continuous.

\begin{lemma}
Let $W$ and~$G$ be topological groups,
let $A$ be a continuous $G$-module,
and let $\varphi:W\to G$ be a continuous homomorphism.
If $\varphi$ factors through a discrete quotient of~$W$
then pullback by~$\varphi$ induces a map
\[
\varphi^*:\tn H_\tn{Borel}^i(G,A) \to \tn H^i(W,A).
\]
\end{lemma}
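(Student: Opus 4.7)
The plan is to show that pulling back a Borel-measurable cocycle along $\varphi$ automatically yields a continuous cocycle on~$W$, so that the usual cochain-level pullback descends to a well-defined map on cohomology.

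Concretely, write $\varphi$ as the composition $W \xrightarrow{q} W/N \xrightarrow{\bar\varphi} G$, where $q$ is the quotient map onto the discrete topological group $W/N$ and $\bar\varphi$ is continuous. Given an $i$-cochain $z \colon G^i \to A$, the pullback $\varphi^*z \defeq z \circ \varphi^i$ factors as
\[
W^i \xrightarrow{q^i} (W/N)^i \xrightarrow{\bar\varphi^i} G^i \xrightarrow{z} A.
\]
The first map $q^i$ is continuous because $q$ is. The composition $z \circ \bar\varphi^i$ is a map out of the discrete space $(W/N)^i$, hence continuous for \emph{any} set-theoretic $z$, regardless of measurability. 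Thus $\varphi^*z$ is continuous.

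Applying this observation both to a Borel cocycle representing a class in $\tn H^i_\tn{Borel}(G,A)$ and to a Borel cochain witnessing a coboundary relation, we conclude that $\varphi^*$ sends Borel cocycles to continuous cocycles and Borel coboundaries to continuous coboundaries. Since pullback commutes with the coboundary operator at the cochain level, this induces a well-defined homomorphism $\varphi^* \colon \tn H^i_\tn{Borel}(G,A) \to \tn H^i(W,A)$.

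There is no real obstacle here; the only point requiring a moment's care is to verify that a Borel-measurable cochain on~$G$ really does pull back to a function on the discrete space $(W/N)^i$, which is immediate from the factorization of~$\varphi$. The argument uses nothing about the topology of~$G$ or~$A$ beyond continuity of~$\bar\varphi$ and the action of~$G$ on~$A$.
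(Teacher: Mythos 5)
Your argument is correct and is essentially the paper's own proof: the paper observes that the pulled-back cochain is constant on cosets of an open subgroup and hence continuous, which is exactly your factorization through the discrete quotient $W/N$. Your additional remark that coboundaries also pull back to continuous cochains (so the map is well defined on cohomology) is a fine, if routine, elaboration of the same idea.
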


\begin{proof}
Pullback along~$\varphi$ takes
an arbitrary cocycle $G^n\to A$
to a cocycle $W^n\to A$ that is constant on cosets
of an open subgroups.
So the pulled-back cocycle is continuous.
\end{proof}

Second, in the archimedean case,
Borel $\tn H^1$ works just as well
as continuous $\tn H^1$
in the local Langlands correspondence.

\begin{lemma} \label{thm32}
Let $G$ and $A$ be separable completely metrizable
topological groups, in other words, Polish groups,
and let $A$ carry a continuous $G$-module structure.
The natural map
\[
\tn H^1(G,A) \to \tn H^1_\tn{Borel}(G,A)
\]
is an isomorphism.
\end{lemma}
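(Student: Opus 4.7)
The plan is to argue that at the level of $1$-cocycles and $1$-coboundaries, the continuous and Borel complexes already coincide, so no nontrivial identification of cohomology classes is required beyond matching up these subgroups of the respective cochain spaces.

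For $1$-coboundaries this agreement is automatic: a $0$-cochain is simply an element $a\in A$, since measurability is vacuous for a function out of a point, and its coboundary is the function $g\mapsto{}^g\!a\cdot a^{-1}$. Hence every Borel $1$-coboundary is already a continuous $1$-coboundary, and the natural map on cohomology will be injective.

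For $1$-cocycles I would show that every Borel $1$-cocycle is automatically continuous. Given such a cocycle $z\colon G\to A$, I would form the semidirect product $A\rtimes G$, which is a Polish topological group because $A$ and $G$ are Polish and the $G$-action on $A$ is continuous. Define $\tilde z\colon G\to A\rtimes G$ by $\tilde z(g)=(z(g),g)$. Then $\tilde z$ is Borel measurable, and the $1$-cocycle identity for $z$ is precisely the assertion that $\tilde z$ is a group homomorphism splitting the projection $A\rtimes G\to G$. Invoking the classical theorem that a Borel measurable homomorphism between Polish groups is automatically continuous --- a consequence of Pettis's lemma, and a standard tool in the Borel cohomology literature --- yields the continuity of $\tilde z$, hence of $z$. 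This establishes surjectivity.

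The main obstacle is simply citing the automatic continuity theorem in the appropriate form; once that is in hand, the rest is a short formal manipulation. A secondary point is to verify that $A\rtimes G$ is genuinely a Polish topological group, but this follows routinely from the hypothesis that $A$ is a continuous $G$-module together with the Polish hypotheses on $A$ and $G$.
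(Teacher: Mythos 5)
Your proposal is correct and follows essentially the same route as the paper: both reduce the statement to the Banach--Pettis automatic continuity theorem for Borel measurable homomorphisms between Polish groups by encoding a crossed homomorphism $z\colon G\to A$ as the section $g\mapsto(z(g),g)$ of the projection $A\rtimes G\to G$. Your write-up in fact states the semidirect-product correspondence in the correct direction (a homomorphism $G\to A\rtimes G$ splitting the projection), and the observation that coboundaries come from $0$-cochains, where measurability is vacuous, matches the paper's implicit treatment of injectivity.
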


Results of this kind are known as \emph{automatic continuity}.

\begin{proof}
A theorem of Banach and Pettis \cite{pettis50},
nicely explained in Rosendal's overview
of automatic continuity \cite[Theorem~2.2]{rosendal09},
proves the result when the action of $G$ on~$A$ is trivial,
in which case $\tn H^1(G,A) = \Hom_\tn{cts}(G,A)$.
In general, use their theorem together with
the fact that a crossed homomorphism
$G\to A$ is the same as a homomorphism
$A\rtimes G\to G$ that restricts
to the identity on~$G$.
\end{proof}

\begin{remark}
It would be interesting to see
if Clausen and Scholze's
condensed mathematics \cite{clausen_scholze19}
offers a replacement to Borel cohomology.
We will not pursue this idea here.
\end{remark}

\subsection{The key lemma} \label{sec:gc:kl}
The following \namecref{thm4} is
the main technical tool underpinning
the proof of \Cref{thm1}.
There is nothing essential in its use of Borel cohomology.

\begin{lemmax} \label{thm4}
Let $A$, $A'$, $E$, $G$, $G'$, and~$W$
be topological groups with $A$ and~$A'$ abelian.
Assume that either all groups are discrete or
all groups are separable and locally compact.
Let the group $W$ act continuously on the groups~$G$ and~$E$
by group automorphisms,
let $E\to G$ be a continuous $W$\!-equivariant homomorphism,
and let
\begin{center}
\begin{tikzcd}
1 \rar &
A \rar\dar{\alpha} &
E \rar\dar{\varepsilon} &
G \rar\dar{\gamma} &
1 \\

1 \rar &
A' \rar &
E' \rar &
G' \rar &
1
\end{tikzcd}
\end{center}
be a morphism of topological extensions.
Let $f\colon W\to G'$ be a continuous homomorphism
such that the map $\gamma f\colon G\rtimes W\to G'$
is a homomorphism.
Consider the diagram
\begin{center}
\begin{tikzcd}
1 \rar &
A \rar\dar{\alpha} &
E\rtimes W \rar &
G\rtimes W \rar\dar{\gamma f} &
1  &
(c) \\

1 \rar &
A' \rar &
E' \rar &
G' \rar &
1 &
(c').
\end{tikzcd}
\end{center}
with top and bottom extensions classified
by the cohomology classes $c$ and~$c'$,
respectively,
and let $p\colon G\rtimes W\to W$ denote
the canonical projection.
Then
\[
(\gamma f)^*(c') = \alpha_*(c)\cdot p^* f^*(c').
\]
\end{lemmax}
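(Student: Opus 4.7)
The plan is to prove the equality in $\tn H^2_\tn{Borel}(G\rtimes W, A')$ at the level of explicit Borel cocycle representatives, following the dictionary set up in \Cref{sec:gc:ge} and \Cref{sec:gc:bc}. First I would choose normalized Borel sections $s\colon G\to E$ and $s'\colon G'\to E'$. Using the section $(g,w)\mapsto(s(g),w)$ of the top extension, the classifying cocycle admits the explicit formula
\[
c((g_1,w_1),(g_2,w_2)) = s(g_1)\cdot{}^{w_1}s(g_2)\cdot s(g_1\cdot{}^{w_1}g_2)^{-1},
\]
while $\alpha_*c$, $p^*f^*c'$, and $(\gamma f)^*c'$ arise from applying $\alpha=\varepsilon|_A$ and composing $c'$ with $f\circ p$ or $\gamma f$.

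The key observation is that the natural Borel lift $\xi(g,w) = \varepsilon(s(g))\,s'(f(w))$ of $\gamma f$ and the tautological section $\sigma(g,w) = s'(\gamma(g)f(w))$ are both Borel sections of the pullback extension $(\gamma f)^*E'\to G\rtimes W$, since each projects to $\gamma(g)f(w)$. Their pointwise ratio
\[
\beta(g,w) = \varepsilon(s(g))\,s'(f(w))\,s'(\gamma(g)f(w))^{-1}
\]
lies in $A'$ and defines a Borel $1$-cochain on $G\rtimes W$. The heart of the argument is the direct verification, by expansion in $E'$, that the coboundary $\partial\beta$ equals $(\gamma f)^*c'\cdot\bigl((\alpha_*c)\cdot p^*f^*c'\bigr)^{-1}$ as $A'$-valued $2$-cocycles. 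The hypothesis that $\gamma f$ is a homomorphism---equivalently $f(w)\gamma(g)f(w)^{-1} = \gamma({}^w g)$---enters crucially here, because it exhibits $s'(f(w_1))\,\varepsilon(s(g_2))\,s'(f(w_1))^{-1}$ and $\varepsilon({}^{w_1}s(g_2))$ as two lifts to $E'$ of the same element of $G'$, differing by a controllable factor of $A'$.

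The hard part will be the combinatorial bookkeeping in this $E'$-level expansion: many noncommuting factors must be shuffled---leveraging the centrality of $A'$ in $E'$, which lets $A'$-valued subexpressions commute past arbitrary elements of $E'$---so that the $\varepsilon$-terms regroup into $\alpha_*c$, the $s'\!\circ\!f$-terms into $p^*f^*c'$, and the $s'\!\circ\!\gamma f$-terms into $(\gamma f)^*c'$. When $W$ is trivial the argument specializes to the classical extendability criterion $\alpha_*c=\gamma^*c'$ recalled at the end of \Cref{sec:gc:ge}, so the lemma may be viewed as that criterion upgraded to incorporate the auxiliary semidirect-product $W$-action. The passage to the Borel setting requires only that each manipulation preserve Borel measurability, which is automatic since $s$, $s'$, and $\beta$ are Borel.
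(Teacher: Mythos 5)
Your first steps are fine: $\xi(g,w)=\varepsilon(s(g))\,s'(f(w))$ and $\sigma(g,w)=s'(\gamma(g)f(w))$ are indeed two Borel lifts of $\gamma f$ to $E'$, so their cocycles $z_\xi$ and $z_\sigma$ both represent $(\gamma f)^*(c')$ and differ exactly by $\partial\beta$ with $\beta=\xi\sigma^{-1}$. The gap is the claim that the remaining expansion ``regroups'' into $\alpha_*(c)\cdot p^*f^*(c')$. Carrying the expansion out (with $A'$ central and normalized sections) one finds, as an identity of $2$-cochains,
\[
z_\xi = D\cdot \alpha_*(c)\cdot p^*f^*(c'),
\qquad
D\bigl((g_1,w_1),(g_2,w_2)\bigr)=\delta(w_1,g_2),
\quad
\delta(w,g)\defeq s'(f(w))\,\varepsilon(s(g))\,s'(f(w))^{-1}\,\varepsilon\bigl({}^{w}s(g)\bigr)^{-1}\in A'.
\]
The hypothesis that $\gamma f$ is a homomorphism gives $f(w)\gamma(g)f(w)^{-1}=\gamma({}^wg)$ only in $G'$; nothing in the setup lifts this identity to $E'$, so $\delta$ is a genuinely nontrivial Borel function in general (in the application: conjugation in $\Pin(V)$ by a lift of $\rep(w)$ need not agree with $\varepsilon\circ({}^w\cdot)$, only up to sign). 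Hence your asserted identity $\partial\beta=(\gamma f)^*(c')\cdot\bigl(\alpha_*(c)\cdot p^*f^*(c')\bigr)^{-1}$ at the cocycle level amounts to $D=(\partial\beta)^2$, i.e.\ to $\delta\equiv 1$ when $A'=\{\pm1\}$ --- an unwarranted extra hypothesis; in general the two sides differ by the cross-term~$D$.

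Showing that the class of $D$ vanishes in $\tn H^2_\tn{Borel}(G\rtimes W,A')$ is not bookkeeping: it \emph{is} the lemma, since $[D]$ equals the difference of the two sides of the asserted equation. Note that $D$ restricts trivially along both $G$ and $W$ (it is $1$ whenever $w_1=1$ or $g_2=1$), and the paper disposes of exactly such classes by a structural argument rather than by exhibiting a primitive: it uses the split sequence $\tn H^2_\tn{Borel}(W,A')\xto{p^*}\tn H^2_\tn{Borel}(G\rtimes W,A')\xto{i^*}\tn H^2_\tn{Borel}(G,A')$ with section $s^*$, checks $s^*$ of both sides using $f=(\gamma f)\circ s$, and checks $i^*$ of both sides using the classical extendability criterion $\gamma^*(c')=\alpha_*(i^*c)$ supplied by the existence of~$\varepsilon$. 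So either adopt that reduction to $i^*$ and $s^*$, or supply a separate argument producing a Borel primitive for~$D$; as written, no amount of shuffling in $E'$ will close the computation. (A smaller point: your manipulations use centrality of $A'$ in $E'$, which the lemma does not assume, though it does hold in the intended application.)
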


The assumption on the topologies
of the groups involved implies,
by \Cref{thm22} in the locally compact case
and the discussion of \Cref{sec:gc:ge}
in the discrete case,
that the relevant extensions
are classified by Borel cohomology classes.

\begin{proof}
The proof rests on an understanding
of $\tn H_\tn{Borel}^2(G\rtimes W,A)$.
The semidirect product fits into
a split short exact sequence
\begin{center}
\begin{tikzcd}
1 \rar &
G \rar{i} &
G \rtimes W \rar{p} &
W \rar \lar[bend left,swap,dashed]{s} &
1
\end{tikzcd}
\end{center}
in which $i\colon G\to G\rtimes W$ is the canonical inclusion,
$p\colon G\rtimes W\to W$ is the canonical projection,
and $s\colon W\to G\rtimes W$ is the canonical inclusion.
This sequence dualizes to a split exact sequence
of abelian groups
\begin{center}
\begin{tikzcd}
1 \rar &
\tn H_\tn{Borel}^2(W,A) \rar{p^*} &
\tn H_\tn{Borel}^2(G\rtimes W,A) \rar{i^*}
\lar[bend left,swap,dashed]{s^*} &
\tn H_\tn{Borel}^2(G,A).
\end{tikzcd}
\end{center}
The map $p^*$ realizes $\tn H_\tn{Borel}^2(W,A)$
as a direct summand of $\tn H_\tn{Borel}^2(G\rtimes W,A)$
with a canonical complement, the kernel of~$s^*$.
The map $p^*s^*$ is projection onto the summand,
and the map $i^*$ identifies its complement,
the kernel of~$s^*$, with a subgroup of $\tn H_\tn{Borel}^2(G,A)$.
To prove the lemma, it therefore suffices to show that
$s^*(\gamma f)^*(c') =  f^*(c')$ and that
$i^*(\gamma f)^*(c') = i^*\alpha_*(c)$.
The first equation follows
from the identity $ f = (\gamma f)\circ s$.
The second equation amounts to showing that
$\gamma^*(c') = \alpha_*i^*(c)$,
and this is a consequence of the existence of~$\varepsilon$.
\end{proof}

Later, in \Cref{sec:cc:inv},
we need the following compatibility condition.

\begin{lemma} \label{thm16}
In the setting of \Cref{thm4},
a Borel-measurable crossed homomorphism $\varphi\colon W\to G$
maps under the coboundary $\tn H_\tn{Borel}^1(W,G)
\to\tn H_\tn{Borel}^2(W,A)$
to the pullback of~$c$ along the homomorphism
$\varphi\cdot\tn{id} \colon W\to G\rtimes W$.
\end{lemma}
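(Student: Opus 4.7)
The plan is to produce explicit Borel-measurable $2$-cocycles representing the two sides of the claimed equation and observe that, with compatible choices of section, they coincide on the nose. For the left-hand side I would unwind the coboundary $\delta\colon\tn H^1_\tn{Borel}(W,G)\to\tn H^2_\tn{Borel}(W,A)$ attached to the $W$-equivariant extension $1\to A\to E\to G\to 1$. Fix a Borel-measurable set-theoretic section $s\colon G\to E$, which exists by the theorem of Mackey invoked in the proof of \Cref{thm22}. Given a Borel-measurable crossed homomorphism $\varphi\colon W\to G$, the composite $\tilde\varphi\defeq s\circ\varphi\colon W\to E$ is a Borel-measurable lift, and its coboundary
\[
d\tilde\varphi(w_1,w_2)=\tilde\varphi(w_1)\cdot{}^{w_1}\tilde\varphi(w_2)\cdot\tilde\varphi(w_1w_2)^{-1}
\]
lies in~$A$, since modulo~$A$ it equals $d\varphi=1$, and represents $\delta[\varphi]$ by definition of the connecting map.

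For the right-hand side I would compute $(\varphi\cdot\tn{id})^*(c)$ using the induced Borel section $\sigma\colon G\rtimes W\to E\rtimes W$, $(g,w)\mapsto(s(g),w)$. A short semidirect-product calculation identifies the resulting cocycle for~$c$ as
\[
c\bigl((g_1,w_1),(g_2,w_2)\bigr)=s(g_1)\cdot{}^{w_1}s(g_2)\cdot s(g_1\cdot{}^{w_1}g_2)^{-1}\in A.
\]
Substituting $(g_i,w_i)=(\varphi(w_i),w_i)$ and applying the crossed-homomorphism identity $\varphi(w_1)\cdot{}^{w_1}\varphi(w_2)=\varphi(w_1w_2)$ collapses this expression into exactly $d\tilde\varphi(w_1,w_2)$.

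The only subtlety I would check carefully is that the two $W$-actions on~$A$ implicit in these formulas agree: the action obtained by restricting the $W$-action on~$E$ (used in $\delta$) and the action encoded in the semidirect product $E\rtimes W$ (used in the pullback cocycle). These are literally the same by construction, so the two cocycles are equal as Borel functions $W\times W\to A$, not merely cohomologous, and the lemma follows. I do not anticipate any substantive obstacle; the only nontrivial ingredient is the existence of a Borel-measurable section, which is already the standing hypothesis in the setting of Borel cohomology.
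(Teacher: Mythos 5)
Your proof is correct and follows essentially the same route as the paper: choose a Borel-measurable section $s\colon G\to E$, represent the coboundary of $\varphi$ via the lift $s\circ\varphi$, represent $c$ by the cocycle attached to the section $(s,\tn{id})\colon G\rtimes W\to E\rtimes W$, and observe that pulling back along $\varphi\cdot\tn{id}$ and using the crossed-homomorphism identity yields the same function on the nose. The explicit check that the two $W$-actions on $A$ coincide is a reasonable extra sentence but adds nothing beyond the paper's argument.
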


By \Cref{thm32}, we could have written
``continuous'' here instead of ``Borel-measurable''.

\begin{proof}
Let $s\colon G\to E$ be a Borel-measurable
section of $E\to G$
and let $\widetilde\varphi=s\circ\varphi$
be a lift of $\varphi$ to~$E$.
The coboundary of $\varphi$
is given by the formula \cite[Chapter~I.5.6]{serre02}
\[
(w,w') \mapsto \widetilde\varphi(w)
\cdot{}^w\widetilde\varphi(w')
\cdot\widetilde\varphi(ww')^{-1}.
\]
On the other hand, the class~$c$ is represented
by the $2$-cocycle
\[
(g,w;g',w') \mapsto s(g)w\cdot s(g')w'
\cdot(s(g\cdot{}^wg')ww')^{-1}
= s(g)\cdot{}^ws(g')\cdot s(g\cdot{}^wg')^{-1}
\]
corresponding to the section
$(s,\tn{id})\colon G\rtimes W\to E\rtimes W$.
Pulling back this function along~$\varphi$
replaces $g$ by $\varphi(w)$ and recovers
the coboundary of~$\varphi$.
\end{proof}

For our application of \Cref{thm4} to the proof of \Cref{thm1},
the group~$W$ is the Weil group,
the top extension is the universal cover of the dual group,
the bottom extension is the universal cover
of a complex orthogonal group,
and the morphism between them arises
from the given orthogonal representation
$\rep\colon {}^LG\to\tn O(V)$:
\begin{equation} \label{thm11}
\begin{tikzcd}
1 \rar &
\pi_1(\widehat G) \rar\dar{e_\rep} &
\widehat G_\tn{univ} \rar\dar &
\widehat G \rar\dar{\rep|_{\widehat G}} &
1 \\
1 \rar &
\{\pm1\} \rar &
\Pin(V) \rar &
\tn O(V) \rar &
1 &
(c_\tn{pin}).
\end{tikzcd}
\end{equation}
Here $e_\rep$ is the ``spin character'',
which we study in \Cref{sec:spin},
and the class $c_\tn{pin}$
classifies the bottom extension.
Let $c_G\in\tn H^2_\tn{Borel}({}^LG,\pi_1(\widehat G))$
classify the extension $\widehat G_\tn{univ}\rtimes W_k$
of ${}^LG$ by~$\pi_1(\widehat G)$,
as in the \namecref{thm4}.

With this setup,
we prove the theorem by pulling back the conclusion
of \Cref{thm4} along the given $L$-parameter.
After pullback, the quotient of the cohomology classes
$\rep^*(c_\tn{pin})$ and $p^*\rep|_{W_k}^*(c_\tn{pin})$
becomes a quotient of root numbers
and the cohomology class $e_{\rep,*}(c_G)$
becomes the value of a central character on an involution.
The goal of the remainder of the article
is to explain these identifications,
thereby proving \Cref{thm1}.

\section{Stiefel-Whitney classes} \label{sec:sw}
Deligne's formula for orthogonal root numbers
is a key technical tool supporting
the main results of this article.
To use his formula effectively,
we need a workable definition
of the second Stiefel-Whitney class.
The goal of this largely expository section
is to explain how to interpret
in terms of group cohomology
the second Stiefel-Whitney class
of a bounded complex orthogonal representation
of a countable discrete group.
In the end, the class is
the pullback of a certain pin-group extension.

This interpretation is surely well known
to the experts, and special cases
have already appeared in the literature,
for instance, in a paper of
Gunarwardena, Kahn, and Thomas
on real orthogonal representations of finite groups
\cite{gunarwardena_kahn_thomas89}.
We generalize their results
by working with countable discrete groups
instead of finite groups
and complex representations
instead of real representations.

\subsection{Classifying spaces}
One way to construct characteristic classes
is to pull them back from universal
cohomology classes of a certain classifying space.
In this subsection, we review the theory
of classifying spaces,
loosely following Mitchell's notes on
classifying spaces \cite{mitchell11}
and Section~6 of Stasheff's
survey article \cite{stasheff78}.

Let $G$ be a Lie group.
We assume $G$ to be second countable
but we do not assume $G$ to be connected.
So $G$ could be a complex reductive group
or a countable discrete group.

Let $P\to B$ be a principal $G$-bundle.
Although $G$ is a manifold
we do not impose any smoothness
or differentiability assumption on bundles:
they are simply continuous.
If $P$ is weakly contractible,
that is, having trivial homotopy groups,
then we call $B$ a \emph{classifying space} for~$G$
and $P$ a \emph{universal $G$-bundle}.
The bundle is universal in the following sense:
for every CW-complex~$X$, the canonical map 
from the set of homotopy classes of maps $X\to B$
to the set of equivalence classes
of principal $G$-bundles over~$X$,
defined by pulling back
the universal $G$-bundle, is a bijection.
As the universal property makes reference
only to homotopy classes of maps,
a classifying space for~$G$ is defined uniquely
only up to homotopy equivalence.

It turns out, though this is not clear from the definition,
that classifying spaces exist for every~$G$.
Often we can construct the classifying space by hand.
For example, the classifying space
of a compact orthogonal group of rank~$n$
is the Grassmannian of $n$-planes in~$\R^{\oplus\N}$.
But for a general group such a geometric construction is difficult,
and we can instead construct the classifying space
by simplicial methods
\cite[Chapter 16, Section 5]{may99}. 
The simplicial construction of classifying spaces
makes clear their functoriality:
a homomorphism $\rep\colon G\to H$
of topological groups gives rise
to a map $B\rep\colon BG\to BH$.
Functoriality also follows from Yoneda's Lemma,
without needing to choose a specific model
for the classifying space:
the map $B\rep$
represents the balanced product functor
$P\mapsto H\times_G P$
from principal $G$-bundles
to principal $H$-bundles.

The homotopy class of the classifying space~$BG$
depends only on homotopy type of the group~$G$
in the following sense:
any group homomorphism $G\to H$ that is a homotopy equivalence
induces a homotopy equivalence of classifying spaces.
At the same time, a theorem of Iwasawa
and Malcev \cite[Section~7]{samelson52}
implies that the inclusion into~$G$
of a maximal compact subgroup~$K$ is a homotopy equivalence.
Hence the map $BK\to BG$ is a homotopy equivalence.

Universal characteristic classes live
in the singular cohomology of classifying spaces.
To translate characteristic classes into the language
of group cohomology, therefore,
we should strive to interpret the singular cohomology
of a classifying space in terms of group cohomology.
In his thesis \cite{wigner73},
Wigner gave such an interpretation
using Borel cohomology.

\begin{theorem} \label{thm3}
Let $A$ be a discrete abelian group.
There is a canonical natural isomorphism
\[
\tn H_\tn{sing}^i(B(\cdot),A)
\simeq H_\tn{Borel}^i(\cdot,A)
\]
of contravariant functors
from the category of Lie groups
to the category of abelian groups.
\end{theorem}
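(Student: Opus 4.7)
The plan is to construct a canonical natural transformation from Borel cohomology to the singular cohomology of the classifying space and then show it is an isomorphism by reducing to low degrees through long exact sequences in the coefficient variable. This is essentially Wigner's approach in his thesis.

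First, I would build the comparison map using the bar construction model of~$BG$, the simplicial space whose space of $n$-simplices is~$G^n$ with face and degeneracy maps given by multiplication, projection, and insertion of the identity. A normalized Borel-measurable $n$-cochain $G^n\to A$ evaluates on the standard cell coordinates to give a simplicial cochain on this model, and the cocycle and coboundary conditions match on the two sides. Naturality in~$G$ is immediate from functoriality of the bar construction.

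Second, I would check the low-degree agreement by hand. In degree zero, both groups reduce to~$A$. In degree one, both sides classify continuous homomorphisms $G\to A$: the Borel side by the Banach--Pettis automatic-continuity theorem already invoked in \Cref{thm32}, and the singular side because $\tn H^1_\tn{sing}(BG,A)=\Hom(\pi_1(BG),A)=\Hom(\pi_0(G),A)$ for discrete $A$. To promote this to higher degrees I would use dimension shifting: embed the discrete abelian group~$A$ into a discrete divisible, hence injective, group~$I$, and form the induced topological coefficient module of Borel maps $G\to I$ with translation action. Both cohomology theories vanish on induced modules in positive degrees (Shapiro's lemma for the Borel side; triviality of the induced bundle over~$BG$ for the singular side), so the two long exact sequences attached to the short exact sequence $0\to A\to I\to I/A\to 0$ can be compared and the isomorphism propagated by induction on~$i$.

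The main technical obstacle is producing a coefficient module that is simultaneously acyclic for both theories while still being manageable enough to prove the required vanishing; the discreteness of~$A$ is crucial for keeping the measurability under control. A more conceptual alternative, which I would pursue if the direct argument became unwieldy, would be to invoke Segal and Mitchison's construction of Eilenberg--MacLane objects in the category of topological spaces and exhibit both functors as homotopy classes of maps $BG\to K(A,i)$; this identifies the two theories structurally rather than through explicit cochains.
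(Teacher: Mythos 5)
There is a genuine gap, and it sits exactly where the real content of the theorem lies. Your comparison map does not exist as described: when $G$ is not discrete, the geometric realization of the bar construction is not a CW complex with one cell per point of $G^n$, so a Borel-measurable (in particular, discontinuous) cochain $G^n\to A$ does not evaluate to a simplicial, cellular, or singular cochain on $BG$ in any evident way. What one actually has is the hypercohomology spectral sequence of the simplicial space $[n]\mapsto G^n$ with coefficients in the constant sheaf $\ul A$, whose $E_1$-page involves the sheaf cohomology of the spaces $G^n$, not functions on them; an edge map exists for \emph{continuous} cochains, but continuous cohomology is precisely what fails to compute $\tn H^\bullet_\tn{sing}(BG,A)$ (compare \Cref{thm31}: $\tn H^2(\C^\times,\{\pm1\})=0$ while $\tn H^2_\tn{sing}(B\C^\times,\{\pm1\})\neq 0$). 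Bridging measurable cochains and the sheaf-theoretic description of $\tn H^\bullet_\tn{sing}(BG,\ul A)$ is the hard step, and your first paragraph assumes it away.

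The dimension-shifting step is also not sound as stated. If $I$ is a discrete divisible group, then $\tn H^i_\tn{sing}(BG,I)$ and $\tn H^i_\tn{Borel}(G,I)$ do not vanish in positive degrees (take $G=\C^\times$, $I=\Q/\Z$: $BG\simeq\C\tn P^\infty$ has nontrivial even-degree cohomology), so the sequence $0\to A\to I\to I/A\to 0$ does not efface anything. If instead you efface with the coinduced module of Borel maps $G\to I$, the Borel-side vanishing is indeed available (Moore's effaceability theorems), but this module is not discrete, so the singular side of your comparison no longer makes sense as stated, and ``triviality of the induced bundle over $BG$'' is not a vanishing statement for any cohomology of $BG$ that your comparison map reaches; making the topological side work with such coefficients (via Segal--Mitchison-type resolutions or soft sheaves on $BG$) is again the crux. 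This is why the paper does not reprove the statement: it quotes Wigner's thesis, which identifies Borel cohomology with the sheaf cohomology $\tn H^\bullet(BG,\ul A)$ (his Theorem~4, via exactly this kind of resolution argument carried out carefully), and then only adds the soft observation that for a Lie group $BG$ is homotopy equivalent to a CW complex, so sheaf cohomology of $\ul A$ agrees with singular cohomology. Your closing suggestion to argue via Segal--Mitchison Eilenberg--MacLane objects is the viable route, but it is the substance of the proof rather than a fallback.
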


\begin{proof}
Let $G$ be a Lie group.
Wigner defined cohomology groups
$\tn H_\tn{Wig}^\bullet(G,A)$
which he showed to agree with
the Borel cohomology groups
$\tn H_\tn{Borel}^\bullet(G,A)$.
Wigner's Theorem~4 shows that
$\tn H^\bullet(G,A)$ agrees with the 
cohomology of the constant sheaf
$\ul A$ on the classifying space~$BG$.
Since $G$ is a Lie group,
$BG$ is sufficiently nice
(homotopy equivalent to a CW complex, say)
that this sheaf cohomology
agrees with the singular cohomology
$\tn H_\tn{sing}^i(BG,A)$ \cite{sella16}.
\end{proof}

\subsection{Definition for vector bundles}
Let $X$ be a CW complex.
The theory of Stiefel-Whitney classes
assigns to a rank-$n$ real vector bundle~$V$
over~$X$ a family of cohomology classes
\[
w_i(V)\in\tn H^i_\tn{sing}(X,\F_2),
\qquad i = 0,1,\dots,n.
\]
These characteristic classes,
along with others like the Chern classes,
provide a powerful algebraic framework
for computations with vector bundles.
The standard source for the subject
is Milnor and Stasheff's book
on characteristic classes \cite{milnor_stasheff74};
for our application, Chapters~4 to~9 and~14
are especially relevant.

Stiefel-Whitney classes
are characterized abstractly,
via a series of axioms:
unitality, naturality,
multiplicativity of the total class,
and nontriviality.
To show that these axioms do indeed define
a collection of cohomology classes,
and that this collection is unique,
one defines the classes
as the pullback of certain universal cohomology classes
on a classifying space.
The naturality axiom forces such a description.

The classifying space $B\!\GL_n(\R)$
is the Grassmannian of $n$-planes in~$\R^{\oplus\N}$,
topologized as a direct limit, and
the universal bundle over~$B\!\GL_n(\R)$
is just the tautological bundle over the Grassmannian,
whose fiber over a point is the $n$-plane
the point represents.
The $\F_2$ singular cohomology ring of
this infinite Grassmannian is a graded polynomial ring
with one generator for each $i=1,2,\dots,n$.
We call the generator in degree~$i$ the $i$th
\emph{universal singular Stiefel-Whitney class}.
Given a rank-$n$ real vector bundle~$V$
on~$X$ classified by the map $f_V\colon X\to B\!\GL_n(\R)$,
we define the $i$th Stiefel-Whitney class of~$V$
as the image under the pullback
\[
f_V^*: \tn H^i_\tn{sing}(B\!\GL_n(\R),\F_2)
\to \tn H^i_\tn{sing}(X,\F_2)
\]
of the $i$th universal singular Stiefel-Whitney class.

Since the compact orthogonal group~$\tn O_n$
of rank~$n$ is a maximal compact subgroup of~$\GL_n(\R)$,
the classifying spaces of the two groups
are homotopy equivalent to each other
and real vector bundles are classified by maps to~$B\tn O_n$.
In the literature one often works 
with the classifying spaces of this maximal compact subgroup
instead of the ambient general linear group,
as we do here.

\subsection{Definition for representations}
It is now clear how to define the Stiefel-Whitney classes
of a real representation.
Let $w_{i,\tn{univ}}\in
\tn H^i_\tn{Borel}(\GL_n(\R),\{\pm1\})$
denote the Borel cohomology class corresponding,
via \Cref{thm3}, to the $i$th universal
singular Stiefel-Whitney class.
We call $w_{i,\tn{univ}}$
the \emph{$i$th universal Stiefel-Whitney class}.
Set $w_{i,\tn{univ}} \defeq 0$ if $i<0$ or $i>n$.

\begin{definition}
Let $G$ be a Lie group.
The \emph{$i$th Stiefel-Whitney class}
of a real representation
$\rep:G\to\GL_n(\R)$ is the cohomology class
\[
w_i(\rep) \defeq \rep^*(w_{i,\tn{univ}})
\in \tn H^i_\tn{Borel}(G,\{\pm1\}).
\]
\end{definition}

This definition is not enough for us, however.
Since the relevant representations of $L$-groups are complex,
not real, we need to define the Stiefel-Whitney classes
of a complex representation, the goal of this subsection.
In outline, to make the definition we simply require
that base change from $\R$ to~$\C$
leave Stiefel-Whitney classes unchanged.
This stipulation defines Stiefel-Whitney classes
for all complex representations that descend to~$\R$,
in particular, the bounded orthogonal representations.

An \emph{$\R$-structure} on a complex vector space~$V$
is a real subspace $V_0\subseteq V$
such that the map $V_0\otimes_\R\C\to V$
is an isomorphism.
An \emph{$\R$-structure} on a complex representation
$(\rep,V)$ is a real representation $(\rep_0,V_0)$
such that $V_0$ is an $\R$-structure on~$V$
and $\rep$ factors through~$\rep_0$.
An \emph{isomorphism} of $\R$-structures
on a complex representation is an isomorphism
of the corresponding real representations.

It is not the case that every complex orthogonal
representation admits an $\R$-structure:
for instance, the two-dimensional representation of~$\Z$
that sends~$1$ to the diagonal matrix
with entries $(2i,-i/2)$
is orthogonal but does not admit an $\R$-structure
because its character takes imaginary values.
However, if the representation is
in addition \emph{bounded},
meaning the closure of its image is compact,
then it does admit an $\R$-structure.

\begin{lemma}\label{thm5}
Every bounded complex orthogonal representation
admits an $\R$-structure.
\end{lemma}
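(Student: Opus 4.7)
The plan is to manufacture the $\mathbb{R}$-structure from the orthogonal form together with a suitably chosen Hermitian form. Since the representation $\rep\colon G\to\tn O(V,B)$ is bounded, the closure of $\rep(G)$ in $\GL(V)$ is compact; averaging an arbitrary positive definite Hermitian inner product over this compact group with respect to Haar measure produces a $G$-invariant positive definite Hermitian form $H$ on $V$. From this point on, the task is to cook up a $G$-equivariant antilinear involution $\sigma\colon V\to V$; the fixed subspace $V_0 := V^\sigma$ will automatically be $G$-invariant and will satisfy $V_0\otimes_\R\C\simeq V$, yielding the desired $\R$-structure.

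The bilinear form $B$ induces a $\C$-linear isomorphism $V\to V^\vee$ while $H$ induces an antilinear isomorphism $V\to V^\vee$; composing them gives a $G$-equivariant $\C$-antilinear operator $J\colon V\to V$ uniquely characterized by $B(v,w)=H(v,Jw)$. The key identities are then two: first, that $J^2$ is self-adjoint with respect to $H$, which follows from the symmetry of $B$ combined with the Hermitian property of $H$; and second, that $J^2$ is positive, which follows from the chain
\[
H(v,J^2v)=B(v,Jv)=B(Jv,v)=H(Jv,Jv)>0 \qquad (v\neq 0).
\]

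Because $J^2$ is $G$-equivariant, positive, and self-adjoint, it admits a unique positive self-adjoint square root $P$, which is automatically $G$-equivariant. Uniqueness of this square root also forces $JPJ^{-1}=P$, since conjugation by $J$ preserves positivity, self-adjointness, and the property of squaring to $J^2$. Setting $\sigma := JP^{-1}$ then gives $\sigma^2 = JP^{-1}JP^{-1} = J^2P^{-2} = \tn{id}$, so $\sigma$ is the desired $G$-equivariant antilinear involution.

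The only subtle point is the bookkeeping of antilinear operators and the verification that the square root $P$ commutes with $J$ in the appropriate sense; this is the main technical hurdle, but it is a routine consequence of the functional calculus applied to the positive self-adjoint operator $J^2$. All other ingredients—averaging to obtain $H$, constructing $J$, and passing to the fixed subspace of $\sigma$—are essentially automatic.
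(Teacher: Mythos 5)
Your argument is correct, but it is a genuinely different proof from the paper's. The paper's proof is structural and very short: the closure of the image is a compact subgroup of the complex orthogonal group, hence conjugate into a maximal compact subgroup, which is the compact real orthogonal group $\tn O_n\subset\GL_n(\R)$; realizability over $\R$ is then immediate. You instead run the classical Frobenius--Schur/polar-decomposition argument: average to get an invariant positive definite Hermitian form $H$, combine it with the symmetric form $B$ to get an equivariant antilinear $J$ with $J^2$ positive self-adjoint, and normalize $\sigma=JP^{-1}$ to get an equivariant antilinear involution whose fixed points give the $\R$-structure. Your computations check out (the chain $H(v,J^2v)=B(v,Jv)=B(Jv,v)=H(Jv,Jv)$ is exactly right), and the one step you flag as delicate, $JPJ^{-1}=P$, is most cleanly settled in finite dimensions by noting that $P$ is a real-coefficient polynomial in $J^2$ (interpolate $\sqrt{\;\cdot\;}$ at the positive eigenvalues), and $J$ commutes with $J^2$ and with real scalars; the uniqueness-of-square-root route also works but requires some care with adjoints of antilinear maps. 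You also implicitly use, as the paper does explicitly, that the closure of the image is a group, so Haar measure is available. What each approach buys: the paper's proof is a one-liner granted Cartan's theory of maximal compact subgroups, while yours is self-contained, avoids that structure theory, and moreover produces an $\R$-structure on which $B$ restricts to a real positive definite form (note $B(v,w)=H(v,Pw)$ for $v,w\in V_0$), which is a pleasant bonus in view of the anisotropic real quadratic space appearing in \Cref{thm36}, though the lemma itself does not require it.
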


\begin{proof}
An exercise in point-set topology shows
that the closure of the image of
the representation is again a group,
and by hypothesis the closure is compact.
Hence the representation factors through
some compact orthogonal group~$\tn O_n$,
where $n$ is the complex dimension
of the representation, because $\tn O_n$
is a maximal compact subgroup of
the complex orthogonal group.
\end{proof}

The base change problem appears in many
other settings than representation theory.
For example, we can study the base change
of varieties from $\R$ to~$\C$.
In that setting, it can happen that $\R$-varieties
are nonisomorphic but become isomorphic
upon base change to~$\C$.
In our setting the story is simpler:
any two real forms of a (semisimple)
representation must be isomorphic.

\begin{lemma} \label{thm35}
Let $G$ be a group and
$\rep:G\to\GL(V)$ a finite-dimensional complex representation.
If $\rep$ is semisimple then any two
of its $\R$-structures are isomorphic.
\end{lemma}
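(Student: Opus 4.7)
The plan is to interpret the space of $\R$-linear $G$-equivariant maps $V_0\to V_0'$ as a real form of the $\C[G]$-endomorphisms of $V$, and then exploit Zariski density to extract an isomorphism. First I would fix the canonical identifications $V_0\otimes_\R\C \xrightarrow{\sim} V \xleftarrow{\sim} V_0'\otimes_\R\C$ built into the definition of an $\R$-structure, and set $W \defeq \Hom_{\R[G]}(V_0,V_0')$, a finite-dimensional real vector space. The standard compatibility of Hom with flat base change yields a canonical $\C$-linear isomorphism
\[
W\otimes_\R\C \xrightarrow{\sim} \Hom_{\C[G]}(V_0\otimes_\R\C,\, V_0'\otimes_\R\C) = \Hom_{\C[G]}(V,V),
\]
exhibiting $W$ as a real form of the finite-dimensional complex vector space $\Hom_{\C[G]}(V,V)$.

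Second, I would invoke semisimplicity: by Schur's lemma the complex algebra $\Hom_{\C[G]}(V,V)$ decomposes as a product $\prod_\pi\tn{Mat}_{n_\pi}(\C)$ of matrix algebras indexed by the isotypic components of $V$. The subset $U\subseteq\Hom_{\C[G]}(V,V)$ of invertible elements is therefore Zariski-open (being the complement of the zero locus of a product of determinants) and nonempty (containing the identity endomorphism of $V$ coming from the fixed identifications above).

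Third, I would appeal to the elementary fact that a real form of a complex affine space is Zariski-dense: if $w_1,\ldots,w_m$ is an $\R$-basis of $W$, it is also a $\C$-basis of $W\otimes_\R\C$, and a complex polynomial vanishing on the $\R$-span of the $w_i$ must vanish identically, since the real points of $\C^m$ are Zariski-dense in $\C^m$. Consequently $W\cap U$ is nonempty, and any element of this intersection gives the desired $G$-equivariant $\R$-linear isomorphism $V_0\xrightarrow{\sim} V_0'$. No step presents a genuine obstacle; the argument amounts to the Hilbert-90 vanishing of $\tn H^1(\tn{Gal}(\C/\R),\Aut_{\C[G]}(V))$ written out concretely, valid because semisimplicity reduces $\Aut_{\C[G]}(V)$ to a product of general linear groups.
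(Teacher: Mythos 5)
Your argument is correct, and it takes a genuinely different (more hands-on) route than the paper. The paper's proof is purely cohomological: it classifies the $\R$-structures on $\rep$ by the nonabelian Galois cohomology set $\tn H^1(\Gamma\!_\R,\Aut_G(\rep))$, uses semisimplicity and Schur's lemma to identify $\Aut_G(\rep)$ with a product of complex general linear groups, and then quotes Hilbert's Theorem~90. You instead unwind that cohomological vanishing into an explicit construction: base change identifies $\Hom_{\R[G]}(V_0,V_0')\otimes_\R\C$ with $\Hom_{\C[G]}(V,V)$, the invertible locus there is nonempty and Zariski-open, and Zariski density of a real form produces an invertible $G$-equivariant real map $V_0\to V_0'$ (which is then an isomorphism by equality of dimensions). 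This is essentially the classical density proof of the Noether--Deuring theorem, and as you half-observe, it is the standard proof of the relevant generalized Hilbert~90 made concrete. One interesting byproduct: your proof does not actually need semisimplicity --- invertibility of an element of $\Hom_{\C[G]}(V,V)$ as an endomorphism of $V$ is a nonempty Zariski-open condition (nonvanishing of $\det$ restricted to that subspace) whether or not the algebra decomposes into matrix blocks, so the matrix-algebra decomposition is cosmetic and your argument proves the uniqueness of $\R$-structures for arbitrary finite-dimensional representations. In the paper's formulation semisimplicity is what makes $\Aut_G(\rep)$ a product of general linear groups so that Hilbert~90 applies in its simplest form; your approach buys generality and self-containedness, while the paper's is shorter and matches the cohomological language used throughout the article.
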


\begin{proof}
The isomorphism classes of
$\R$-structures on $\rep$ are classified
by the Galois cohomology set
$\tn H^1(\Gamma\!_\R,\Aut_G(\rep))$.
Since $\rep$ is semisimple,
$\Aut_G(\rep)$ is a product
of complex general linear groups.
Hence the cohomology set is trivial
by Hilbert's Theorem~90.
\end{proof}

By the \namecref{thm35},
the following definition
does not depend on the choice of $\R$-structure.

\begin{definition}
Let $G$ be a Lie group and
let $(\rep,V)$ be a semisimple complex representation of~$G$
that admits an $\R$-structure $(\rep_0,V_0)$.
The \emph{$i$th Stiefel-Whitney class} of~$(\rep,V)$ is
\[
w_i(\rep) \defeq w_i(\rep_0)
\in \tn H^i_\tn{Borel}(G,\{\pm1\}).
\]
\end{definition}

We are most interested in the second Stiefel-Whitney class
because of its appearance in Deligne's theorem on root numbers.

\begin{lemma} \label{thm36}
Let $V_0$ be a real anisotropic quadratic space
and let $V \defeq V_0\otimes_\R\C$,
a complex quadratic space.
There exists an extension~$\Pin(V)$
of $\tn O(V)$ by~$\{\pm1\}$,
called the (complex) \emph{pin group},
with the following property: the class
$c_\tn{pin}\in\tn H^2_\tn{Borel}(\tn O(V),\{\pm1\})$
that classifies $\Pin(V)$ is the pullback of
the second universal Stiefel-Whitney class
\[
w_{2,\tn{univ}}\in
\tn H^2_\tn{Borel}(\tn O(V_0),\{\pm1\})
\simeq \tn H^2_\tn{Borel}(\GL(V_0),\{\pm1\}).
\]
\end{lemma}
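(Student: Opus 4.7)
The plan is to reduce the lemma to the classical identification of the real Pin extension with the second Stiefel-Whitney class, then to transport this identification across the homotopy equivalence $\tn O(V_0)\hookrightarrow\tn O(V)$ arising from passage to a maximal compact subgroup.

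First, I construct $\Pin(V)$ by the usual Clifford algebra recipe in the complex setting: if $\tn{Cl}(V)$ denotes the complex Clifford algebra, take $\Pin(V)$ to be the subgroup of $\tn{Cl}(V)^\times$ generated by vectors of~$V$ of unit norm. The twisted conjugation action of $\Pin(V)$ on~$V$ lands in $\tn O(V)$ and has kernel $\{\pm 1\}$, giving a topological (in fact complex Lie) central extension.

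Second, since $\tn{Cl}(V_0)$ embeds naturally into $\tn{Cl}(V)$, the real Pin group $\Pin(V_0)$ embeds in $\Pin(V)$ and fits into a morphism of central extensions
\begin{center}
\begin{tikzcd}
1 \rar & \{\pm 1\} \rar\dar[equals] & \Pin(V_0) \rar\dar & \tn O(V_0) \rar\dar & 1 \\
1 \rar & \{\pm 1\} \rar & \Pin(V) \rar & \tn O(V) \rar & 1.
\end{tikzcd}
\end{center}
By the dictionary of \Cref{sec:gc:ge} (extended to topological extensions via \Cref{thm22}), the existence of this morphism says precisely that the pullback of $c_\tn{pin}$ along $\tn O(V_0)\hookrightarrow\tn O(V)$ is the Borel class classifying the real Pin extension.

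Third, anisotropy of~$V_0$ makes $\tn O(V_0)$ a compact real form, hence a maximal compact subgroup, of~$\tn O(V)$. By the Iwasawa--Malcev theorem cited earlier in the discussion of classifying spaces, the inclusion $\tn O(V_0)\hookrightarrow\tn O(V)$ is a homotopy equivalence, so the induced map of classifying spaces $B\tn O(V_0)\to B\tn O(V)$ is as well. Via \Cref{thm3} this yields the isomorphism $\tn H^2_\tn{Borel}(\tn O(V),\{\pm 1\})\simeq\tn H^2_\tn{Borel}(\tn O(V_0),\{\pm 1\})$ named in the lemma and identifies $c_\tn{pin}$ with the class of the real Pin extension.

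The remaining input, and the real content of the proof, is the classical identification of the real Pin extension class in $\tn H^2_\tn{Borel}(\tn O(V_0),\{\pm 1\})\simeq\tn H^2_\tn{sing}(B\tn O(V_0),\F_2)$ with the second Stiefel-Whitney class of the tautological representation, which by definition is $w_{2,\tn{univ}}$. This is standard (see \cite{milnor_stasheff74}); the main obstacle—to the extent that there is one—is simply translating it cleanly through Wigner's isomorphism of \Cref{thm3}. Given the groundwork laid in the preceding subsections this is a matter of careful bookkeeping rather than genuine new work.
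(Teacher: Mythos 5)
Your proposal is correct and is essentially the paper's argument: both proofs reduce, via anisotropy of~$V_0$ (so that $\tn O(V_0)$ is a maximal compact subgroup of $\tn O(V)$ and restriction is an isomorphism on Borel $\tn H^2$ by Wigner's theorem), to the classical fact that the real Clifford-theoretic pin extension of $\tn O(V_0)$ is classified by $w_{2,\tn{univ}}$ — the paper gets the complex group by base change of the real algebraic pin group, while you build $\Pin(V)$ directly inside $\Cl(V)$ and spell out the homotopy-equivalence step that the paper leaves implicit. Two small corrections: that classical identification is not in Milnor--Stasheff \cite{milnor_stasheff74} (cite instead Fr\"ohlich's Appendix~I \cite{frohlich85} or the introduction of \cite{gunarwardena_kahn_thomas89}, as the paper does), and your ``generated by unit-norm vectors'' recipe must be taken with the sign convention giving Conrad's $\Pin$ (class $w_{2,\tn{univ}}$) rather than the Atiyah--Bott--Shapiro variant $\Pin^-$ (class $w_{2,\tn{univ}}+w_{1,\tn{univ}}^2$), since the two are inequivalent extensions even over~$\C$.
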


We call the class
$c_\tn{pin}\in\tn H^2_\tn{Borel}(\tn O(V),\{\pm1\})$
the \emph{pin class}.

\begin{proof}
Let $\Pin(V_0)$ be the extension of $\tn O(V_0)$
by~$\{\pm1\}$ classified by~$w_{2,\tn{univ}}$.
It is well-known that $\Pin(V_0)$
is (the rational points of)
the standard (real) algebraic pin group
defined using the Clifford algebra $\Cl(V_0)$;
see, for instance, Appendix~I of \cite{frohlich85}
or the introduction to \cite{gunarwardena_kahn_thomas89}.

To be specific, the group $\GPin(V_0)$ is
the stabilizer of the subspace $V_0\subseteq\Cl(V_0)$
under the conjugation action of
the units group $\Cl(V_0)^\times$ on~$\Cl(V_0)$.
The main anti-involution of~$\Cl(V_0)$
is the automorphism~$\alpha$ induced
by the order-reversing automorphism
$v_1\otimes\cdots\otimes v_d
\mapsto v_d\otimes\cdots\otimes v_1$
of the tensor algebra on~$V_0$.
The spinor norm is the homomorphism $\Cl(V_0)\to\C^\times$
sending $x\in\Cl(V_0)$ to $x\alpha(x)$.
Finally, $\Pin(V_0)$ is the kernel of the spinor norm.

It is now clear that we may
take as $\Pin(V)$ the complex algebraic group
obtained from (the algebraic group underlying)
$\Pin(V_0)$ by base change from $\R$ to~$\C$.
\end{proof}

It follows from the \namecref{thm36} that
the second Stiefel-Whitney class
of a bounded complex orthogonal representation
$r:G\to\tn O(V)$ is classified
by the pullback of group extensions $r^*\Pin(V)$.
This conclusion is our final reformulation
of the second Stiefel-Whitney class
in the language of group cohomology.

\begin{remark}
There are two variant and non-isomorphic
definitions of the pin group,
stemming from the fact that
the elements $w_{2,\tn{univ}}$ and $w_{1,\tn{univ}}^2$
of $\tn H^2_\tn{Borel}(\tn O(V),\{\pm1\})$ are distinct.
Conrad's SGA~3 article \cite{conrad14},
whose notation for the Pin group agrees with ours,
nicely explains the difference;
Remarks C.4.9 and C.5.1 are especially relevant.
The other variant of the pin group,
which we do not use here,
is due to Atiyah, Bott, and Shapiro
\cite{atiyah_bott_shapiro63},
and is denoted by $\Pin^-$ in Conrad's article.
The definition is similar to ours but one
modifies the spinor norm by a sign twist.
\end{remark}

\subsection{Deligne's theorem}
We start by defining the Stiefel-Whitney classes
of a complex orthogonal representation
of the Weil group.
When the field~$k$ is archimedean,
this task is already complete because
$W_\R$ and~$W_\C$ are Lie groups.
When $k$ is nonarchimedean,
we use the fact that every complex representation
of~$W_k$ factors through a discrete quotient.

\begin{definition} \label{thm33}
Let $k$ be nonarchimedean,
let $V$ be a complex quadratic space,
and let $\rep\colon W_k\to\tn O(V)$ be a
complex orthogonal representation of~$W_k$.
The $i$th \emph{Stiefel-Whitney class} of $(\rep,V)$
is the image of $w_i(\rep)$ under the inflation map
\[
\tn H^i(W_k/\ker\rep,\{\pm1\})
\to \tn H^i(W_k,\{\pm1\}).
\]
\end{definition}

The evident compatibility of Stiefel-Whitney classes
with inflation shows that we are free to replace $\ker\rep$
by any open subgroup of~$W_k$ on which $\rep$ is trivial.
We can now state our reformulation of Deligne's theorem
on root numbers.

\begin{theorem} \label{thm19}
Let $\rep\colon W_k\to\tn O(V)$ be a bounded
complex orthogonal representation
and let $c_\tn{pin}\in\tn H_\tn{Borel}^2(\tn O(V),\{\pm1\})$
be the pin class of \Cref{thm36}.
Then
\[
\frac{\omega(\rep)}{\omega(\det\rep)}
= \sgn\rep^*(c_\tn{pin}).
\]
\end{theorem}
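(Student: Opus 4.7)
The plan is to reduce to Deligne's theorem on orthogonal root numbers by recognizing the right-hand side as the sign of the second Stiefel-Whitney class of~$\rep$.

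First I would dispatch the topological subtleties. When $k$ is archimedean, $W_k$ is a Lie group and the Borel cohomology framework of \Cref{sec:gc:bc} applies directly. When $k$ is nonarchimedean, $\rep$ factors through a discrete quotient~$Q$ of $W_k$, and functoriality of pullback expresses $\rep^*(c_\tn{pin})$ as the inflation to $W_k$ of the pullback of $c_\tn{pin}$ along $Q \to \tn O(V)$, a class in ordinary discrete group cohomology $\tn H^2(Q, \{\pm 1\})$.

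Next, since $\rep$ is bounded, \Cref{thm5} supplies an $\R$-structure $(\rep_0, V_0)$ on $(\rep, V)$, unique up to isomorphism by \Cref{thm35}. The representation factors through $\tn O(V_0) \hookrightarrow \tn O(V)$, and by \Cref{thm36} the pin class $c_\tn{pin}$ corresponds, under the base-change isomorphism $\tn H^2_\tn{Borel}(\tn O(V), \{\pm 1\}) \simeq \tn H^2_\tn{Borel}(\tn O(V_0), \{\pm 1\})$, to the second universal Stiefel-Whitney class. Pulling back along~$\rep$ therefore yields
\[
\rep^*(c_\tn{pin}) = w_2(\rep_0) = w_2(\rep) \in \tn H^2(W_k, \{\pm 1\}),
\]
where the last equality is the definition of $w_2$ for a complex representation, extended via \Cref{thm33} in the nonarchimedean setting. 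I would then invoke Deligne's theorem \cite{deligne76} in the form $\omega(\rep)/\omega(\det\rep) = \sgn w_2(\rep)$, which combined with the identification above gives the desired formula.

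The main burden is bookkeeping: ensuring the universal Stiefel-Whitney class travels correctly through Wigner's isomorphism (\Cref{thm3}), the maximal-compact-subgroup homotopy equivalence $B\tn O(V_0)\simeq B\!\GL(V_0)$, the real-to-complex base change of the pin extension, and pullback along~$\rep$. These are exercises in composing the definitions already gathered in \Cref{sec:sw}, and no new machinery beyond what has already been developed is needed.
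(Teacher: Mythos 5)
Your proposal is correct and takes essentially the same route as the paper: identify $\rep^*(c_\tn{pin})$ with $w_2(\rep)$ via \Cref{thm36} (with the $\R$-structure handled by \Cref{thm5} and \Cref{thm35}), then conclude by Deligne's theorem. The only presentational difference is that the paper applies Deligne's Proposition~5.2 to the dimension-zero, trivial-determinant virtual representation $\rep - \det\rep - (\dim\rep-1)\cdot\tn{triv}$ rather than quoting the derived form $\omega(\rep)/\omega(\det\rep)=\sgn w_2(\rep)$ outright, but this is the same argument.
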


The statement of the \namecref{thm19} uses that
the group $\tn H^2(W_k,\{\pm1\})$ is cyclic of order two.
The function $\sgn$ was defined in \Cref{sec:intro:not};
it uniquely identifies
this group with the group~$\{\pm1\}$.

\begin{proof}
Applying Deligne's theorem
\cite[Proposition~5.2]{deligne76}
to the virtual representation
\[
\rep - \det\rep - (\dim\rep-1)\cdot\tn{triv}
\]
of dimension zero and determinant
shows that the lefthand side
of the equation equals $\sgn(w_2(\rep))$.
And by \Cref{thm36},
$w_2(r) = r^*(c_\tn{pin})$.
\end{proof}

\section{Spin lifting} \label{sec:spin}
Let $G$ be a complex reductive group,
identified with its set of $\C$-points.
In our application~$G$ will be
the Langlands dual of a reductive $k$-group,
but in this section we restrict attention
to complex groups so there is no need
to decorate~$G$ with a hat.

Consider an (algebraic, complex)
orthogonal representation $\rep\colon G\to\tn O(V)$.
As $G$ is (by assumption) connected,
the representation~$\rep$ factors
through the identity component $\SO(V)$ of~$\tn O(V)$,
and we may without loss of generality
replace $\tn O(V)$ by~$\SO(V)$ as the target of~$r$.

The special orthogonal group~$\SO(V)$ is not simply connected.
Supposing that $\dim V>2$,
its universal cover $p\colon \Spin(V)\to\SO(V)$, a double cover,
is an algebraic group called the \emph{spin group}.
We can construct the spin group as a subgroup of
the units group of a Clifford algebra,
though for our purposes, we can understand the group
using the combinatorics of root systems alone.
When $\dim V=2$, so that $\SO(V)=\G_{\tn m}$
is a one-dimensional torus, we define
$\Spin(V)=\G_{\tn m}$ with double cover
$p\colon \Spin(V)\to\SO(V)$ the squaring map.

The existence of the spin group creates
a dichotomy in the orthogonal representations
$\rep\colon G\to\SO(V)$ of~$G$:
either the representation lifts to the spin group
or it does not.
We can study and refine this lifting question
by passing to the universal cover~$G_\tn{univ}$ of~$G$:
\[
G_\tn{univ} = \frak z\times G_\tn{sc}
\]
where $\frak z$ is the Lie algebra of the center~$Z$ of~$G$
and $G_\tn{sc}$ is the simply-connected cover
of the derived subgroup of~$G$.
Standard facts about covering spaces imply
that $\rep$ lifts to a homomorphism
$\rep_\tn{univ}\colon G_\tn{univ}\to\Spin(V)$.
Restricting $\rep_\tn{univ}$
to the kernels of the projections
yields the following commutative diagram,
which is essentially equivalent to \eqref{thm11}:
\[
\begin{tikzcd}
1 \rar &
\pi_1(G) \rar\dar{e_\rep} &
G_\tn{univ} \rar\dar{\rep_\tn{univ}} &
G \rar\dar{\rep} &
1 \\
1 \rar &
\{\pm1\} \rar &
\Spin(V) \rar &
\SO(V) \rar &
1.
\end{tikzcd}
\]
Then $\rep$ lifts to the spin group
if and only if the character
$e_\rep\colon \pi_1(G)\to\{\pm1\}$,
which we call the \emph{spin character} of~$\rep$,
is trivial.

Our goal in this section is to give
a formula for the spin character of a representation
in terms of its weights.
We start in \Cref{sec:spin:crit}
with a criterion for~$\rep$
to lift to the spin group.
From this criterion we then
deduce, in \Cref{sec:spin:char},
a formula for the spin character.

All that is new here is our exposition:
this calculation forms part of the canon
of the representation theory of compact Lie groups.
With that said, there is a small discrepancy
between our answer and Bourbaki's,
which I believe to be an error on Bourbaki's part.
This discrepancy is discussed in \Cref{sec:spin:bourbaki}.

Given a representation $(\rep,V)$ of~$G$
and a maximal torus~$T$,
let $\Phi(V)\subseteq X^*(T)$
denote the set of weights of~$T$ on~$V$.

\subsection{Lifting criterion} \label{sec:spin:crit}
The goal of this subsection is to describe
which orthogonal representations of~$G$
lift to the spin group.
We start with the essential case where $G=T$ is a torus;
the general case reduces easily to this one.

Let $S$ be a maximal torus of~$\SO(V)$
and let $\widetilde S\subseteq\Spin(V)$
be its preimage in the spin group, again a maximal torus.
Any homomorphism $T\to\SO(V)$ can be conjugated to take values in~$S$,
after which point the lift to the spin group,
if it exists, factors through~$\widetilde S$.
Passing to character lattices,
the lifting question
reduces to the algebra question
of whether the homomorphism
$X^*(S)\to X^*(T)$ can be extended
to the group~$X^*(\widetilde S)$,
which contains~$X^*(S)$ as an index-two
subgroup because $\widetilde S\to S$ is a double-cover.

\begin{center}
\begin{tikzcd}
&
\widetilde S \dar{p} \\
T \rar{\rep} \urar[dashed]{\widetilde\rep} &
S
\end{tikzcd}
\qquad\qquad
\begin{tikzcd}
&
X^*(\widetilde S) \dlar[swap,dashed]{\widetilde\rep^*} \\
X^*(T) &
X^*(S) \uar[swap]{p^*} \lar[swap]{\rep^*}
\end{tikzcd}
\end{center}

To solve this extension problem,
we need to understand the double cover $\widetilde S\to S$
as well as the relationship between
the homomorphism $T\to S$ and the weights
of the original orthogonal representation.
Let $n$ be the rank of~$\Spin(V)$,
so that $\dim V=2n$ or $2n+1$.
Fix a set~$I$ of cardinality~$\dim V$
equipped with an involution~$i\mapsto -i$
that fixes no element of~$I$ when $\dim V$ is even
and exactly one element of~$I$, which we denote by~$0$,
when $\dim V$ is odd.

First, the double cover.
Choose an $I$-indexed basis $e=(e_i)_{i\in I}$ of~$V$
for which $\langle e_i,e_j\rangle = [i = -j]$;%
\footnote{Here $[\cal P]$ is the Iverson bracket
popularized by Knuth \cite{knuth92}:
it equals $0$ if the property~$\cal P$ is false
and $1$ if $\cal P$ is true.}
we call such a basis a \emph{Witt basis}.
Let $S_e$ denote the group of
$s=(s_i)_{i\in I}\in\G_{\tn m}^I$ such that
$s_i\cdot s_{-i} = 1$ and such that,
when $\dim V$ is odd, $s_0=1$.
The rank-$n$ torus~$S_e$ acts on~$V$ by
\[
s\cdot e_i \defeq s_ie_i,
\]
realizing $S_e$ as a subgroup of~$\SO(V)$.
Evidently $S_e$ is a maximal torus,
and every other maximal torus of~$\SO(V)$ arises
from a Witt basis~$e$ by this construction.
Passing to the character lattice,
describing a basis of~$X^*(S_e)$ requires a choice
of gauge $p\colon I\setminus\{0\}\to\{\pm1\}$,
that is, a negation-equivariant function.
Given~$p$, say $i>0$ if $p(i)=+1$
and $i<0$ if $p(i)=-1$,
for $i\in I$.
The characters $f_i\colon s\mapsto s_i$
for $i>0$ give a basis for~$X^*(S)$.

We can use the Clifford-algebra description of $\Spin(V)$,
or even easier, the Bourbaki root-system tables
\cite[Planches]{bourbaki_lie4-6},
to work out the character lattice of~$X^*(\widetilde S)$.
Taking $f_p=(f_i)_{i>0}$ as a basis for $X^*(S)_\Q$, 
the character lattice of~$X^*(\widetilde S)$ is the set
of elements of $\tfrac12 X^*(S)$ whose coordinates
are all integers or all half-integers.
In particular, $X^*(\widetilde S)$ is generated
by $X^*(S)$ and the vector
$\tilde f_p = \tfrac12\sum_{i>0} f_i$.
It follows that for $A$ an abelian group,
a homomorphism $X^*(S)\to A$ extends to a homomorphism
$X^*(\widetilde S)\to A$ if and only if,
letting $a_i$ denote the image of~$f_i$,
the sum $\sum_{i>0} a_i$ lies in~$2A$.
If this property is satisfied then $\tilde f_p$
can map to any element whose double is $\sum_{i>0} a_i$.
When $A$ is $2$-torsionfree there is
at most one such element,
hence at most one extension.

Consequently, $\widetilde S$ can be described as
the quotient $S/B$ where $B$ is the group of
$(\varepsilon_i)_{i\in I}\in\{\pm1\}^I$
with $\varepsilon_i = \varepsilon_{-i}$,
$\varepsilon_0=1$,
and $\prod_{i>0}\varepsilon_i = 1$.
In this description the twofold cover
$\widetilde S\to S$ is induced
by the squaring map on~$S$.

Next, the weights.
Since $V$ is an orthogonal representation of~$T$,
its set of weights is negation-invariant.
Let $p\colon \Phi(V)\setminus\{0\}\to\{\pm1\}$ be a gauge.
Using the gauge we can write down an orthogonal decomposition
\[
V = V_0\oplus\bigoplus_{\alpha>0}(V_\alpha\oplus V_{-\alpha})
\]
in which $V_\alpha$ is the orthogonal complement
of $V_{-\alpha}$ when $\alpha\neq0$.
At this point the gauge is only a notational convenience
since the summands in the decomposition do not depend on it.
Choose a Witt basis $(e_i)_{i\in I}$ for~$V$
consisting of weight vectors of~$S$
and let $\alpha_i$ denote the weight of~$e_i$.
If $\dim V$ is odd then $e_0$ is of weight~$0$.
The map $X^*(S)\to X^*(T)$ dual to the homomorphism $T\to S$
sends the basis vector~$f_i$ to the character~$\alpha_i$.

These two analyses combine to a criterion
for the map $T\to S$ to lift to $\widetilde S$.
Choose a gauge~$p$ on~$X^*(T)$ and define the element
\begin{equation} \label{thm8}
\rho_\rep\defeq\frac12\sum_{\alpha>0}
(\dim V_\alpha)\alpha \in \frac12 X^*(T).
\end{equation}
Although $\rho_\rep$ depends on the choice of gauge,
we will only ever use it in a way that is
independent of the choice of gauge.
Now our criterion is this:
the representation~$\rep$ lifts
to the spin group if and only if $\rho_\rep\in X^*(T)$.

When $G$ is no longer abelian, we can reduce
the spin-lifting problem to the abelian case
using the observation that all the obstructions
to lifting lie in a maximal torus.

\begin{lemma} \label{thm9}
Let $f\colon G\to H$ be a homomorphism
of complex reductive groups,
$T\subseteq G$ a split maximal torus of~$G$,
and $\widetilde H\to H$ an isogeny.
Then $f$ lifts to~$\widetilde H$
if and only if $f|_T$ lifts to~$\widetilde H$.
\end{lemma}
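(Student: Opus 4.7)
My plan is to reformulate the lifting question as a question about the splitting of a pullback extension and reduce that splitting to a condition visible on the maximal torus, via the observation that the kernel of the isogeny lies in the center. The direction ``$f$ lifts implies $f|_T$ lifts'' is immediate by restriction, so I focus on the converse.

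Pull back the isogeny $\widetilde H\to H$ along~$f$ to form the extension
\[
1 \to A \to \widetilde G \to G \to 1,
\]
where $\widetilde G \defeq G\times_H\widetilde H$ and $A\defeq\ker(\widetilde H\to H)$, central in $\widetilde G$ because it is central in~$\widetilde H$. A lift of~$f$ to~$\widetilde H$ is the same data as a section of $\widetilde G\to G$. Since $G$ is connected, the image of a section must lie in the identity component~$\widetilde G^\circ$; since $\widetilde G^\circ\to G$ has finite kernel $A\cap\widetilde G^\circ$ and source and target have the same dimension, a section exists if and only if $A\cap\widetilde G^\circ=1$. Pulling back further along the inclusion $T\into G$, the same argument applied to $f|_T$ shows that $f|_T$ lifts if and only if $A\cap\widetilde T^\circ=1$, where $\widetilde T\defeq T\times_G\widetilde G$ is the preimage of~$T$ in~$\widetilde G$.

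Since $\widetilde T^\circ\subseteq\widetilde G^\circ$, it only remains to show that $A\cap\widetilde G^\circ\subseteq\widetilde T^\circ$. The group $\widetilde G^\circ$ is connected and isogenous to the reductive group~$G$, hence is itself reductive. Because $\widetilde T$ is a finite-index extension of the torus~$T$ by the finite group~$A$, its identity component $\widetilde T^\circ$ is a subtorus of $\widetilde G^\circ$ surjecting onto~$T$, and dimension considerations force $\widetilde T^\circ$ to be a maximal torus of~$\widetilde G^\circ$. Now $A\cap\widetilde G^\circ$ lies in the center of~$\widetilde G^\circ$, which for a connected reductive group is contained in every maximal torus, in particular in~$\widetilde T^\circ$. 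This yields the desired containment and completes the proof.

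The main obstacle I anticipate is the structural bookkeeping in the last step: confirming that $\widetilde G^\circ$ is genuinely reductive and that $\widetilde T^\circ$ is a maximal torus of the correct rank. These are standard consequences of the theory of isogenies, ultimately resting on the fact that any closed connected subgroup of finite index in a complex torus is the whole torus, but worth being careful about since $\widetilde H\to H$ is not assumed to be an isogeny of simply connected groups and $f$ is not assumed surjective.
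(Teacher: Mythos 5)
Your proof is correct, but it takes a genuinely different route from the paper's. The paper disposes of the lemma in two lines of topology: an isogeny of complex reductive groups is a covering map in the analytic topology, so the lifting criterion for covering spaces reduces everything to fundamental groups, and the surjectivity of $\pi_1(T)\to\pi_1(G)$ makes the image of $\pi_1(G)$ under $f_*$ equal to the image of $\pi_1(T)$ under $(f|_T)_*$. You instead stay entirely in the algebraic category: you pull back the isogeny to a central extension $1\to A\to\widetilde G\to G\to 1$, characterize the existence of a lift as $A\cap\widetilde G^\circ=1$, and deduce the nontrivial direction from the containment $A\cap\widetilde G^\circ\subseteq\widetilde T^\circ$, which you get because $\widetilde T^\circ$ is a maximal torus of the reductive group $\widetilde G^\circ$ and the center of a connected reductive group lies in every maximal torus. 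In effect, that last fact plays the role that $\pi_1(T)\twoheadrightarrow\pi_1(G)$ plays in the paper. What your argument buys: the lift you produce is manifestly a morphism of algebraic groups (the paper's covering-space lift is a priori only continuous/holomorphic and its algebraicity is left implicit), and your proof is in the ``spirit of SGA~3'' that the paper explicitly declines, so it adapts more readily to other base fields (with extra care in positive characteristic, where isogeny kernels can be infinitesimal). What the paper's argument buys is brevity. Two small points of bookkeeping in your write-up, neither a gap: centrality of $A$ in $\widetilde H$ uses that $\widetilde H$ is connected (the usual convention for an isogeny), but in any case $A\cap\widetilde G^\circ$ is a finite normal subgroup of the connected group $\widetilde G^\circ$ and hence automatically central there, which is all you use; and in the ``if'' direction of your splitting criterion one should note that the bijective morphism $\widetilde G^\circ\to G$ is an isomorphism of algebraic groups (true in characteristic zero), so that its inverse really furnishes the section.
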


The \namecref{thm9}
is true over much more general bases
than the complex numbers.
Counter to the spirit of SGA~3,
we prove it using the analytic topology on~$G$.

\begin{proof}
Isogenies are covering spaces.
This claim follows from the lifting criterion
for covering spaces \cite[Proposition~1.33]{hatcher02}
together with the surjectivity of
the map $\pi_1(T)\to\pi_1(G)$
\cite[Section~4.6]{bourbaki_lie9}.
\end{proof}

Combining \Cref{thm9} with our analysis
of the abelian case completely solves
the problem of lifting an orthogonal representation
to the spin group.

\begin{theorem} \label{thm12}
Let $G$ be a reductive group,
$T\subseteq G$ a maximal torus,
and $(\rep,V)$ an orthogonal representation.
Then $\rep$ lifts to the spin group
if and only if $\rho_\rep\in X^*(T)$.
\end{theorem}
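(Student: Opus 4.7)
The plan is to assemble the \namecref{thm12} from two ingredients already developed in this subsection: the general reduction of a lifting problem to a maximal torus provided by \Cref{thm9}, and the explicit character-lattice criterion worked out for the abelian case that precedes the statement.

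First I would apply \Cref{thm9} to the homomorphism $\rep\colon G\to\SO(V)$ and the isogeny $\Spin(V)\to\SO(V)$ (or the squaring isogeny $\G_{\tn m}\to\G_{\tn m}$ in the two-dimensional case). Every maximal torus of a complex reductive group is split, so $T$ meets the hypotheses of the \namecref{thm9}. It follows immediately that $\rep$ lifts to the spin group if and only if $\rep|_T$ does.

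Next I would invoke the torus analysis performed immediately before the statement. There one conjugates $\rep|_T$ so that it lands in the maximal torus~$S$ of~$\SO(V)$ attached to a Witt basis $(e_i)_{i\in I}$ of weight vectors for~$T$. That analysis shows that $X^*(\widetilde S)$ is generated over $X^*(S)$ by the half-sum $\tilde f_p=\tfrac12\sum_{i>0}f_i$, and that the pullback $\rep^*\colon X^*(S)\to X^*(T)$ sends the basis character $f_i$ to the weight $\alpha_i$ of~$e_i$. Consequently the extension problem for character lattices along $X^*(S)\hookrightarrow X^*(\widetilde S)$ is solvable precisely when $\tfrac12\sum_{i>0}\alpha_i\in X^*(T)$.

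The only real content beyond citing \Cref{thm9} is verifying that this half-sum agrees with the element $\rho_\rep$ defined in~\eqref{thm8}; but the Witt basis vectors of weight $\pm\alpha$ form a basis for $V_\alpha\oplus V_{-\alpha}$, so collecting terms by weight recovers the multiplicity $\dim V_\alpha$ appearing in~\eqref{thm8}. The resulting criterion is independent of the choice of gauge used to define $\rho_\rep$, since changing the sign of a weight $\alpha$ shifts $\rho_\rep$ by the integer element $(\dim V_\alpha)\alpha\in X^*(T)$. The main obstacle is thus purely bookkeeping: matching the two half-sums and confirming gauge-independence. Once this is checked, the equivalence $\rho_\rep\in X^*(T)$ drops out and the proof is complete.
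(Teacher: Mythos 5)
Your proposal is correct and is essentially the paper's own argument: the paper also deduces \Cref{thm12} by combining \Cref{thm9} with the character-lattice analysis of the torus case carried out in \Cref{sec:spin:crit}, where the criterion $\rho_\rep\in X^*(T)$ (with multiplicities built in via the Witt basis of weight vectors) is established. Your added remarks on matching the half-sum with $\rho_\rep$ and on gauge-independence are just the bookkeeping the paper leaves implicit.
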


\subsection{Comparison with Bourbaki}
\label{sec:spin:bourbaki}

Our \Cref{thm12} differs from at least one
work in the canon of Lie groups,
Chapter~9 of Bourbaki's \emph{Groupes et algèbres de Lie}
\cite{bourbaki_lie9}.
I believe there is a small error
in Bourbaki's account of the lifting criterion.
In light of the famous scrupulousness
with which the Bourbaki group prepared their treatises,
a few words are in order to explain the discrepancy.

Chapter~9 of Bourbaki's book studies compact connected Lie groups.
Although this setting is different from ours,
the algebraic setting, there is a standard dictionary 
\cite[Section~VIII.6--7]{serre01}
between the compact and algebraic settings,
and this dictionary gives a comparison
between our work and Bourbaki's.
Exercise~7a of Section~7 of \cite{bourbaki_lie9}
concerns the spin lifting question.
The difference between Bourbaki's answer and ours,
the quantity $\rho_\rep$ defined in \Cref{thm8},
is that we take into account the multiplicity of the weights,
in other words, the dimensions of the weight spaces,
while Bourbaki's analogue of~$\rho_\rep$,
\[
\frac12\sum_{0<\alpha\in\Phi(V)} \alpha,
\]
does not weight the sum by multiplicity.
In this subsection we give an example
explaining why Bourbaki's criterion is incorrect.

Our theory predicts that the double
of an orthogonal representation lifts to the spin group.
Here is an independent proof of this prediction.
It shows that Bourbaki's exercise cannot be correct,
as we explain after the proof.

\begin{lemma}\label{thm10}
Let $\rep\colon G\to\SO(V)$ be an orthogonal representation
of a reductive group~$G$.
Then $\rep\oplus\rep$ lifts to the spin group.
\end{lemma}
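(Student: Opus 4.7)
The plan is to show that the ``doubling'' homomorphism
\[
\delta \colon \SO(V) \to \SO(V \oplus V), \qquad g \mapsto g \oplus g,
\]
lifts to $\Spin(V \oplus V)$; composing $\rep$ with this lift yields the required lift of $\rep \oplus \rep$. So the problem is really about $\delta$, with no reference to $G$ or $\rep$ beyond this composition.

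The main step is to factor $\delta$ as the diagonal $\Delta \colon \SO(V) \to \SO(V) \times \SO(V)$ followed by the block-sum map $\oplus \colon \SO(V) \times \SO(V) \to \SO(V \oplus V)$. Since $\Spin(V \oplus V) \to \SO(V \oplus V)$ is a connected double cover of a connected complex Lie group, a continuous lift of $\delta$ exists if and only if the induced map
\[
\delta_* \colon \pi_1(\SO(V)) \to \pi_1(\SO(V \oplus V))
\]
vanishes. I would compute this by noting that either inclusion $\SO(V) \hookrightarrow \SO(V)\times\SO(V) \xto{\oplus} \SO(V\oplus V)$ is the standard stabilization, which for $\dim V \geq 3$ is a $\pi_1$-isomorphism onto $\Z/2\Z$ (via the fibration $\SO(V) \to \SO(V\oplus V) \to S(V\oplus V)$). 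So $\oplus_*$ is addition and $\delta_*$ sends each class to twice itself, namely zero. The cases $\dim V\leq 2$ (where $\pi_1(\SO(V))$ is $\Z$ or trivial) can be dispatched by direct inspection and give the same answer.

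The one subtlety to watch is that the lift must be algebraic, not merely continuous. I would handle this by pulling back the isogeny $\Spin(V\oplus V)\to\SO(V\oplus V)$ along $\delta$: the pullback is a finite étale double cover of $\SO(V)$, and the vanishing of $\delta_*$ means it is disconnected. Its identity component is then an algebraic subgroup projecting isomorphically onto $\SO(V)$, supplying an algebraic section. This is the step most likely to need care in the write-up, though the argument is short.

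Finally, and this is the punchline, I would remark on why the lemma exposes the error in Bourbaki's exercise. The set $\Phi(V \oplus V)$ equals $\Phi(V)$, so Bourbaki's version of $\rho_\rep$ is unchanged by doubling; his criterion therefore forces $\rep$ and $\rep\oplus\rep$ to lift together. But $\rep$ need not lift---take the standard representation of $\SO_3$---so any criterion depending only on the \emph{set} of weights (and not on their multiplicities) must fail.
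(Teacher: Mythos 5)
Your proof is correct, but it takes a genuinely different route from the paper's. Both arguments make the same initial reduction---everything comes down to lifting the doubling homomorphism $\delta\colon\SO(V)\to\SO(V\oplus V)$, since a lift of $\delta$ composed with $\rep$ lifts $\rep\oplus\rep$---but from there the paper stays inside the Clifford algebra: it factors $\delta$ through $\SO(V\oplus 0)\times\SO(0\oplus V)$ followed by the multiplication map, lifts that composite to $\Spin(V)\to\Spin(V\oplus0)\times\Spin(0\oplus V)\to\Spin(V\oplus V)$, and observes that the central element $-1$ of $\Spin(V)$ dies because each embedding of spin groups preserves the central $-1$ and $(-1)\cdot(-1)=+1$; the lift of $\delta$ then comes for free, with no fundamental-group computation and no low-dimensional case analysis. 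You detect the same ``$2=0$'' phenomenon one floor down, in $\pi_1$: writing $\delta$ as the diagonal followed by block sum, Eckmann--Hilton gives $\delta_*=i_{1*}+i_{2*}$ for the two block inclusions $i_1,i_2\colon\SO(V)\to\SO(V\oplus V)$, each $i_{j*}$ is the stabilization map (and $i_{1*}=i_{2*}$ automatically, both being the unique surjection onto $\Z/2\Z$), so $\delta_*=0$ and the covering-space lifting criterion applies. This is softer---it needs only simple connectedness of $\Spin(V\oplus V)$ and stability of $\pi_1$ of the orthogonal groups---and it meshes with the paper's own topological treatment of isogenies in \Cref{thm9}; like the paper's argument, it is independent of the criterion of \Cref{thm12}, which matters because the lemma's purpose is to refute Bourbaki's exercise without circularity. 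Your algebraicity step via the identity component of the pullback cover is sound and, as a bonus, settles without comment that the lift is a homomorphism; alternatively one can note that a continuous homomorphism lifting an algebraic one along an isogeny of complex algebraic groups is automatically algebraic, which is the convention the paper adopts silently. Two cosmetic points: the homogeneous space $\SO(V\oplus V)/\SO(V)$ is a simply connected Stiefel-type variety rather than a sphere, so either phrase the fibration argument accordingly or iterate one-step stabilizations; and in the low-dimensional cases recall the paper's convention that $\Spin$ of a two-dimensional space is $\G_{\tn m}$ with the squaring map, which only arises when $\dim V=1$, where $\SO(V)$ is trivial and there is nothing to prove.
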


\begin{proof}
It suffices to prove the claim in the case
where $\rep$ is the tautological (identity)
orthogonal representation of $G=\SO(V)$.
Consider the following commutative diagram,
in which the left horizontal arrows are diagonal inclusions,
the right horizontal arrows are multiplication maps,
and the vertical arrows are the canonical projection
from the spin group to the special orthogonal group.
\begin{center}
\begin{tikzcd}
\Spin(V) \dar \rar{\Delta} &
\Spin(V\oplus0)\times\Spin(0\oplus V) \dar \rar{\bullet} &
\Spin(V\oplus V) \dar \\
\SO(V) \rar{\Delta} &
\SO(V\oplus0)\times\SO(0\oplus V) \rar{\bullet} &
\SO(V\oplus V).
\end{tikzcd}
\end{center}
The claim amounts to showing that
the bottom horizontal composite arrow
lifts to $\Spin(V\oplus V)$.
Recall that the kernel of $\Spin(V)\to\SO(V)$
is negation in the Clifford algebra,
which we denote by~$-1$.
The claim is equivalent to the statement
that $-1$ lies in the kernel of
the horizontal top composite arrow.
And this statement follows from the fact that
for any isometric embedding $W\to V$ of quadratic spaces,
the induced map $\Spin(W)\to\Spin(V)$ sends
the center to the center.
This fact is a consequence of the Clifford-algebra
definition of the spin group:
the induced spin-group map
is the restriction of the induced Clifford
algebra map $\Cl(W)\to\Cl(V)$,
and this map is the identity on the copy of~$\C$
inside both algebras.
\end{proof}

It is clear that the tautological orthogonal
representation of~$\SO(V)$ does not lift
to the spin group: otherwise,
the spin cover of~$\SO(V)$ would split
and $\Spin(V)$ would be disconnected.
One the other hand, since $\Phi(V)=\Phi(V\oplus V)$
and Bourbaki's criterion is sensitive only
to the set of weights, that criterion would imply,
along with \Cref{thm10},
that the tautological representation does lift.

\subsection{Spin character}
\label{sec:spin:char}
In this subsection we build on our work
from \Cref{sec:spin:crit} to give
a formula for the spin character
of an orthogonal representation $\rep\colon G\to\SO(V)$.

First, let's review the construction
of the universal covering projection
of a complex torus~$T$.
The universal cover of~$T$ can be identified
with the Lie algebra~$\frak t$ and
the universal covering map $\frak t\to T$
is then the exponential map from the theory of Lie groups.
The universal cover is functorial
because formation of Lie algebras is functorial:
the morphism on universal covers
induced by a morphism of tori
is the differential of the morphism
on the Lie algebras.

For our purposes it is more useful,
however, to describe the universal cover $\frak t$
using cocharacter lattices.
The evaluation map $X_*(T)\otimes\C^\times\to T$
is an isomorphism, forming the tensor product over~$\Z$.
It is conventional to use exponential notation
for these tensors, writing
$a^\lambda$ for $\lambda\otimes a$.
Similarly, evaluation of the derivative at~$1$ gives
a canonical isomorphism $X_*(T)\otimes\C\to\frak t$.
In these coordinates, the exponential cover $\frak t\to T$
is simply the map $X_*(T)\otimes\C
\to X_*(T)\otimes\C^\times$ induced by
the exponential function
\[
\lambda\otimes a\mapsto \exp(2\pi ia)^\lambda.
\]
The universal cover map identifies its kernel,
$X_*(T)$, with the fundamental group of~$T$.
More concretely, the identification $X_*(T)\simeq\pi_1(T)$
is restriction of cocharacters
to the unit circle of~$\C^\times$.
In this tensor product model,
functoriality of the universal cover
follows from functoriality of the cocharacter lattice:
a homomorphism $f\colon T\to S$ of tori induces
a map $f_*\colon X_*(T)\to X_*(S)$,
and the induced map $f_\tn{univ}\colon T_\tn{univ}\to S_\tn{univ}$
is simply
\[
f_\tn{univ}\colon \lambda\otimes a\mapsto f_*(\lambda)\otimes a.
\]
Of particular importance are the isogenies~$f\colon T\to S$,
for which $f_\tn{univ}$ relates
two different descriptions of the same universal cover.

We first work out the character $e_\rep$
in the case where $G=T$ is a torus;
the general case follows immediately
from this special case.
Choose a gauge on~$X^*(T)\setminus\{0\}$
so that we may speak of positive and negative characters.
Retain the notation from \cref{sec:spin:crit},
so that $I$ indexes a Witt basis~$(e_i)_{i\in I}$
of~$V$ of weight vectors,
$S$ is the maximal torus of~$\SO(V)$ corresponding to the basis,
and $\widetilde S$ is its double cover in~$\Spin(V)$.
Here $X_*(S)=\ker(\Z^I\to\Z^{I/\pm})$,
a basis for $X_*(S)$ is $(f_i - f_{-i})_{i>0}$
where $(f_i)_{\in I}$ is the standard basis of $\Z^I$,
and $X_*(\widetilde S)$ is the set of elements
of $\tfrac12 X_*(S)$ whose $f_i$-coefficients
are either all integers or all half-integers.
Now consider the following diagram,
where $S'=S$ and the dashed arrow is squaring.
\begin{center}
\begin{tikzcd}
T_\tn{univ} \dar{\rep_\tn{univ}}\arrow{rrr} &&&
T \dar{\rep} \\
S_\tn{univ} \rar &
S' \rar \arrow[rr,dashed,bend left,"2"] &
\widetilde S \rar &
S
\end{tikzcd}
\end{center}
The map $X_*(S')=X_*(S)\to X_*(S)$ induced by squaring
is multiplication by~$2$.
Therefore,
under the identification $S_\tn{univ}=X_*(S)\otimes\C$,
the map $S_\tn{univ}\to X_*(S')\otimes\C^\times=S'$ is
\[
\lambda\otimes a\mapsto \exp(2\pi i a/2)^\lambda
\qquad\lambda\in X_*(S).
\]
At the same time, our work in \Cref{sec:spin:crit}
shows that the map $\rep_*\colon X_*(T)\to X_*(S)$ is
\[
\rep_*\colon \lambda\mapsto\sum_{i>0}
\langle\lambda,\alpha_i\rangle(f_i-f_{-i}).
\]
As $\alpha_{-i} = \alpha_i^{-1}$,
it follows that the map $X_*(T)\to\widetilde S$
is given by the formula
\[
\lambda\mapsto
\bigl(\exp(\pi i\langle\lambda,
\alpha_i\rangle)\bigr)_{i\in I},
\]
where the target element
is interpreted as a coset in~$S$
following the discussion in \Cref{sec:spin:crit}.
Belying the notational complexity of this formula,
every component of the tuple is~$\pm1$.
We know that the image in~$\widetilde S$
of this tuple lies in the center of~$\Spin(V)$,
an order-two subgroup of~$\widetilde S$.
To identify the image as~$+1$ or~$-1$,
we take the product of the elements of the tuple
with positive index~$i$.
The final formula, therefore, is
\[
e_\rep(\lambda)
= \prod_{i>0} \exp\bigl(\pi i\langle\lambda,
\alpha_i\rangle\bigr)
= \exp\bigl(\pi i\langle\lambda,2\rho_\rep\rangle\bigr)
= (-1)^{\langle\lambda,2\rho_\rep\rangle},
\qquad \lambda\in X_*(T)\simeq\pi_1(T).
\]

When $G$ is not a torus, we choose a split maximal torus~$T$
in~$G$ and use the fact that the inclusion $i\colon T\to G$
induces a surjection $\pi_1(T)\to\pi_1(G)$.
A diagram chase shows that
the spin character $e_{\rep\circ i}$
for the restriction of~$\rep$ to~$T$
factors through $e_\rep$.
In this way we reduce to the case where $G$ is a torus.

\begin{theorem} \label{thm24}
Let $G$ be a complex reductive group,
let $T\subseteq G$ be a maximal torus,
and let $\rep\colon G\to\SO(V)$ be an orthogonal representation.
The spin character $e_\rep\colon\pi_1(G)\to\{\pm1\}$
induced by~$\rep$ is given by the formula
\[
e_\rep(\lambda) = (-1)^{\langle\lambda,2\rho_\rep\rangle},
\]
where $\lambda\in X_*(T)\simeq\pi_1(G)$
and $\rho_\rep$ is defined in \eqref{thm8}.
\end{theorem}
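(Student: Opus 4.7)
The plan is to reduce to the case where $G=T$ is a torus and then perform an explicit cocharacter-lattice computation, mirroring the analyses already carried out in Sections \ref{sec:spin:crit} and the paragraph immediately preceding the theorem.

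For the reduction, I would choose a maximal torus $T\subseteq G$ and use the inclusion $i\colon T\into G$. Since $\pi_1(T)\to\pi_1(G)$ is surjective (a standard fact from the structure theory of compact Lie groups, cf.\ \cite[Section~4.6]{bourbaki_lie9}), it suffices to compute $e_\rep$ after pullback to $\pi_1(T)$. Functoriality of the universal cover together with functoriality of the spin lifting (both established earlier in this section) shows that this pullback is the spin character $e_{\rep\circ i}$ of the restricted representation, so I may assume from the outset that $G=T$ is a torus.

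In the torus case I would identify $T_\tn{univ}$ with $X_*(T)\otimes\C$, so the universal covering map becomes $\lambda\otimes a\mapsto \exp(2\pi ia)^\lambda$. On the target side, choose a gauge on $\Phi(V)\setminus\{0\}$ and a Witt basis of weight vectors, fixing the maximal torus $S\subseteq\SO(V)$ and its double cover $\widetilde S\subseteq\Spin(V)$ as in \Cref{sec:spin:crit}. The key fact is that, under the identification $S_\tn{univ}=X_*(S)\otimes\C$, the composite $S_\tn{univ}\to\widetilde S$ is the ``half-exponential'' $\lambda\otimes a\mapsto \exp(\pi ia)^\lambda$, because the cover $\widetilde S\to S$ corresponds on cocharacter lattices to doubling. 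Combined with the formula $\rep_*(\lambda)=\sum_{i>0}\langle\lambda,\alpha_i\rangle(f_i-f_{-i})$ derived in \Cref{sec:spin:crit}, chasing $\lambda\in X_*(T)\simeq\pi_1(T)$ through the diagram produces the tuple $(\exp(\pi i\langle\lambda,\alpha_i\rangle))_{i\in I}$ in $\widetilde S$, each of whose components is $\pm1$.

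This tuple lifts the identity of $S$ and so lies in the kernel $\{\pm1\}$ of $\Spin(V)\to\SO(V)$; to decide which sign, I would reduce it modulo the subgroup $B\subseteq\{\pm1\}^I$ from \Cref{sec:spin:crit}, which amounts to taking the product of components with positive index. This product equals $(-1)^{\sum_{i>0}\langle\lambda,\alpha_i\rangle}=(-1)^{\langle\lambda,2\rho_\rep\rangle}$ by the definition of $\rho_\rep$ in \eqref{thm8}, proving the formula. The main obstacle, and the reason to insist on the cocharacter picture rather than an abstract covering-space argument, is the accurate bookkeeping around the double cover $\widetilde S\to S$: one must determine precisely which element of the central $\{\pm1\}$ a given exponential lift lands on, and this uses essentially the explicit description of $X^*(\widetilde S)$ as the half-integer vectors of uniform parity. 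Once that identification is nailed down, the formula falls out by direct substitution.
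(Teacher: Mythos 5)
Your proposal is correct and follows essentially the same route as the paper: reduce to the torus case via the surjection $\pi_1(T)\to\pi_1(G)$, then compute on cocharacter lattices, identifying the cover $\widetilde S\to S$ with doubling so that the universal cover maps $\lambda\otimes a$ to $\exp(\pi i a)^\lambda$, and read off the sign as the product of the positive-index components of the tuple $(\exp(\pi i\langle\lambda,\alpha_i\rangle))_{i\in I}$. This matches the paper's argument in \Cref{sec:spin:char} step for step.
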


\section{Central characters and Weil cohomology} \label{sec:cc}
To apply \Cref{thm4} to prove \Cref{thm1},
we need to interpret the image
of the class $\varphi^*(c_G)$ under the map
\[
e_{\rep,*}: \tn H^2(W_k,\pi_1(\widehat G)) \to
\tn H^2(W_k,\{\pm1\}).
\]
It turns out that $\varphi^*(c_G)$
corresponds to Langlands's central character $\chi_\varphi$
-- conjecturally, the central character
of the $L$-packet of~$\varphi$
-- and that the map $e_{\rep,*}$ corresponds to evaluation
of the character on a certain involution~$z_\rep$.
The goal of this section is
to justify and explain these claims,
and ultimately, in \Cref{thm29}, to show that
$e_{\rep,*}\varphi^*(c_G)=\chi_\varphi(z_\rep)$.

Let $Z$ be the center of the quasi-split
reductive group~$G$.
The main difficulty is to interpret
the cohomology group
\[
\tn H^2(W_k,X^*(Z)).
\]
After reviewing the general setting
in which the cohomology of the Weil group
is computed, in \Cref{sec:cc:wc},
we show in \Cref{sec:cc:hc}
that when $Z$ is connected,
this group is the character group
of the Harish-Chandra subgroup
$Z(k)^\HC$ of~$Z(k)$.
When $\fieldchar k = 0$ and $k$ is nonarchimedean
this identification exists even for $Z$ disconnected,
as we explain in \Cref{app:karpuk}.
Although the center of~$G$
need not be connected in general,
Langlands's definition of~$\chi_\varphi$
permits us to reduce, in \Cref{sec:cc:ei},
to the connected-center case.
We conclude in \Cref{sec:cc:inv}
by defining the involution~$z_\rep$
and proving \Cref{thm29}.

In what follows, we use continuous cohomology
for the Weil group of a nonarchimedean field
and Borel cohomology for the Weil group
of an archimedean field.
The difference is sometimes elided in the notation
to avoid overburdening the reader.

\subsection{Cohomology of the Weil group}
\label{sec:cc:wc}
What kind of group cohomology should
we use to study the Weil group?
Over an archimedean field
the right answer is Borel cohomology,
as \Cref{thm31} shows.
Over a nonarchimedean field,
one%
\footnote{Lichtenbaum remarked
that Borel and continuous cohomology
agree in many cases, in particular,
when the coefficient group is discrete
and countable \cite[Remark~2.2]{lichtenbaum09}.
We will not use this result,
though it would slightly simplify the exposition.}
right answer is continuous cohomology.
Some care is required, however,
because the Weil group is not profinite,
only locally profinite.
For profinite groups we can easily reduce
most foundational problems to the setting of finite groups,
where topology is irrelevant, but this
is no longer the case for locally profinite groups.

Fortunately, Flach has written a nice article
\cite{flach08} that addresses technical concerns
in the continuous cohomology of the Weil group.
Flach first situates this cohomology
in a general topos-theoretic setting,
using theory from SGA~4,
and then shows that this general theory recovers
the usual definition of continuous cohomology
by continuous cochains.
One particularly useful consequence
of his work is the existence of the usual
long exact sequence for any short exact sequence
of topological modules whose quotient map
locally admits continuous sections
\cite[Lemma~6]{flach08}.
Another useful consequence is
the following \namecref{thm23}.

\begin{lemma} \label{thm23}
Let $V$ be a discrete $W_k$-module whose underlying
abelian group is uniquely divisible,
in other words, a $\Q$-vector space.
\begin{enumerate}
\item
If $k$ is nonarchimedean
then $\tn H^i(W_k,V) = 0$ 
for $i\geq 2$.
\item
If $k$ is archimedean then
$\tn H_\tn{Borel}^i(W_k,V) = 0$ 
for $i$ odd.
\end{enumerate}
\end{lemma}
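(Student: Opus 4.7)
The plan is to use, in each case, a Hochschild-Serre spectral sequence together with the vanishing of the cohomology of any finite or profinite group with coefficients in a $\Q$-vector space in positive degree.

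For~(1), I would apply Hochschild-Serre to the short exact sequence
\[
1 \to I_k \to W_k \to \Z \to 1,
\]
in which $\Z$ carries the discrete topology and a Frobenius lift provides a continuous section; this makes the spectral sequence available via Flach's framework~\cite{flach08}. Since $V$ is a $\Q$-vector space and the action of~$I_k$ commutes with multiplication by integers, every subspace of the form $V^U$ with $U \subseteq I_k$ open normal is itself a $\Q$-vector space. Restriction-corestriction on the finite quotient $I_k/U$ then forces $\tn H^q(I_k/U, V^U) = 0$ for $q \geq 1$, and passing to the limit yields $\tn H^q(I_k, V) = 0$ for $q \geq 1$. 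The spectral sequence therefore collapses to $\tn H^i(W_k, V) = \tn H^i(\Z, V^{I_k})$, which vanishes for $i \geq 2$ because $\Z$ has cohomological dimension one.

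For~(2), I would split into $k = \C$ and $k = \R$. If $k = \C$ then $W_\C = \C^\times$ is connected, so it acts trivially on the discrete module~$V$, and \Cref{thm3} identifies $\tn H^i_\tn{Borel}(W_\C, V)$ with $\tn H^i_\tn{sing}(B\C^\times, V) = \tn H^i_\tn{sing}(\C P^\infty, V)$, which vanishes in odd degrees by the standard computation of the cohomology of infinite complex projective space. If $k = \R$, I would then apply Hochschild-Serre (equivalently, the Serre spectral sequence for the fibration of classifying spaces) to
\[
1 \to W_\C \to W_\R \to \Z/2 \to 1.
\]
Since $V$, and hence each $\Z/2$-module $\tn H^q_\tn{Borel}(W_\C, V)$, is a $\Q$-vector space, the positive-$p$ rows of the $E_2$-page vanish, so the sequence degenerates to $\tn H^i_\tn{Borel}(W_\R, V) = \tn H^i_\tn{Borel}(W_\C, V)^{\Z/2}$, which by the $k = \C$ case vanishes for $i$ odd.

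The main obstacle I anticipate is technical rather than conceptual: confirming that Hochschild-Serre is available in the continuous and Borel settings for the specific topological groups at hand. Flach's paper provides the needed apparatus in the nonarchimedean case, while in the archimedean case the transfer to singular cohomology of classifying spaces via \Cref{thm3} reduces the question to the Serre spectral sequence of a fibration of CW complexes, which is standard.
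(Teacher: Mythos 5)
Your proposal is correct and follows essentially the same route as the paper: the Hochschild--Serre spectral sequence for $1\to I_k\to W_k\to\Z\to1$ (via Flach's framework) with vanishing of profinite cohomology on uniquely divisible coefficients in the nonarchimedean case, and the classifying space $\C P^\infty$ for $W_\C$ plus the Borel-cohomology Hochschild--Serre sequence for $1\to\C^\times\to W_\R\to\Gamma_{\!\R}\to1$ in the archimedean case. The only cosmetic difference is that you prove the vanishing of $\tn H^{q\geq1}(I_k,V)$ by restriction--corestriction over finite quotients, where the paper cites the standard reference.
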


\begin{proof}
First, assume $k$ is nonarchimedean.
There is a Hochschild-Serre spectral sequence
\[
\tn H^i(\Z,\tn H^j(I_k,V))
\implies \tn H^{i+j}(W_k,V)
\]
coming from the short exact sequence
$1 \to I_k \to W_k \to \Z \to 1$.
This spectral sequence is an instance of
a more general spectral sequence that Flach constructed
\cite[Corollary~6]{flach08}.
In the more general sequence,
the group $\tn H^j(I_k,V)$ is replaced
by a sheaf which may or may not be representable.
But since $V$ is discrete and
$W_k$ is locally profinite,
this sheaf is representable
\cite[Proposition~9.2]{flach08}
and there are no technical problems.

Since $V$ is uniquely divisible and $I_k$ is profinite,
$\tn H^j(I_k,V) = 0$ for $j\geq1$
\cite[Proposition~1.6.2]{neukirch_schmidt_wingberg08}.
Since $\Z$ has cohomological dimension one,
$\tn H^i(\Z,-)$ vanishes for $i\geq2$.
So all entries on the starting page of
the spectral sequence vanish except
possibly those in positions
$(0,0)$ and $(1,0)$.

Next, suppose $k$ is archimedean.
If $k=\C$ then $W_\C=\C^\times= S^1\times\R_{>0}$
has classifying space
the infinite-dimensional complex projective space.
Its integral, hence rational, cohomology
is known to be concentrated in even degrees.
If $k=\R$ then use the Hochschild-Serre spectral sequence
in Borel cohomology \cite[Theorem~9]{moore76a}
for the short exact sequence
$1\to\C^\times\to W_\R\to\Gamma\!_\R\to1$
together with the vanishing of $\tn H^{\geq1}(\Gamma\!_\R,-)$
on uniquely divisible groups to show that the map
$\tn H_\tn{Borel}^i(W_\R,V)
\to\tn H_\tn{Borel}^i(\C^\times,V)^{\Gamma\!_\R}$
is an isomorphism for $i\geq2$.
\end{proof}

\subsection{Harish-Chandra subgroup} \label{sec:cc:hc}
Recall that $G$ is a quasi-split reductive $k$-group.
Define the pairing $G(k)\otimes
\Hom_\tn{$k$-gp}(G,\G_{\tn m})\to\R$ by
\[
\langle g,\alpha\rangle = \ord_k(\alpha(g)),
\]
where $\ord_k\colon k^\times\to\R$
is the absolute value when $k$ is archimedean
and the discrete valuation when $k$ is nonarchimedean.
Currying yields a map
\[
\theta_G: G(k)\to \Hom(\Hom_\tn{$k$-gp}(G,\G_{\tn m}),\R).
\]
In particular, if $G=T$ is a torus
then the target of $\theta_G$ is $X_*(T)^{\Gamma\!_k}_\R$.
A character of $G(k)$ is \emph{unramified}
if it is inflated along $\theta_G$.

\begin{definition}
Let $G$ be a $k$-group.
The \emph{Harish-Chandra subgroup} $G(k)^\HC$ of~$G(k)$
is the kernel of the homomorphism~$\theta_G$.
\end{definition}

For example, if $G$ is finite
then $G(k)^\HC = G(k)$ because
roots of unity have valuation zero.
If $G=T$ is a torus
then the group $T(k)^\HC$ is compact,
hence profinite.
The Harish-Chandra subgroup is relevant 
because its character group is exactly
the cohomology group of interest to us,
as we now explain.

\begin{lemma} \label{thm30}
Let $T$ be a $k$-torus.
The image of the composite map
(taken in Borel cohomology if $k$ is archimedean)
\begin{center}
\begin{tikzcd}
\tn H^1(W_k,X^*(T)_\C) \rar &
\tn H^1(W_k,\widehat T) \rar{\tn{LLC}} &
\Hom_\tn{cts}(T(k),\C^\times),
\end{tikzcd}
\end{center}
in which the second map is
the local Langlands correspondence for tori,
is the group of unramified characters of~$T(k)$.
\end{lemma}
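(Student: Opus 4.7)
My plan is to analyze the long exact sequence in $W_k$-cohomology (continuous in the nonarchimedean case, Borel in the archimedean case) arising from the exponential short exact sequence of $W_k$-modules
\[
0 \to X^*(T) \to X^*(T)\otimes_\Z\C \to \widehat T \to 1,
\]
in which the middle term carries its standard complex-vector-space topology and the final arrow is $\lambda\otimes z \mapsto \lambda\otimes \exp(2\pi iz)$. The relevant portion of the long exact sequence is
\[
\tn H^1(W_k, X^*(T)_\C) \xrightarrow{\alpha} \tn H^1(W_k, \widehat T) \xrightarrow{\delta} \tn H^2(W_k, X^*(T)),
\]
so the image of the composite map in the statement equals $\ker\delta$, viewed via the LLC for tori as a subgroup of $\Hom_\tn{cts}(T(k), \C^\times)$. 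My goal is therefore to show that this kernel is precisely the group of unramified characters.

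Next, I would identify $\ker\delta$ with the set of continuous characters $\chi\colon T(k) \to \C^\times$ that admit a continuous additive lift $\widetilde\chi\colon T(k) \to \C$ along $\exp(2\pi i\cdot)$. The key point is naturality of the LLC with respect to the morphism $X^*(T)_\C \to \widehat T$: concretely, one establishes a companion isomorphism $\tn H^1(W_k, X^*(T)_\C) \cong \Hom_\tn{cts}(T(k), \C)$ that is compatible with LLC for tori via $\alpha$ on one side and pointwise exponentiation on the other. This companion can be proved by reduction to $T = \G_{\tn m}$ via Shapiro's lemma, where it is immediate from $W_k^\tn{ab} \cong k^\times$ together with the fact that a continuous homomorphism $k^\times \to \C$ is determined by its behavior on the maximal noncompact quotient of~$k^\times$.

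The final step is the purely topological observation that the characters admitting such a continuous $\C$-lift are exactly the unramified ones. Since $\C$ has no nontrivial compact subgroups, every continuous additive character $T(k)\to\C$ vanishes on every compact subgroup of $T(k)$, in particular on the compact group $T(k)^\HC$ (compact because $T(k)/T(k)^\HC$ sits as a discrete cocompact subgroup in the nonarchimedean case, or a closed subgroup in the archimedean case, of the real vector space $X_*(T)^{\Gamma\!_k}\otimes\R$); hence any $\chi$ admitting a continuous lift is unramified. Conversely, an unramified character factors as $\eta\circ\theta_T$ for a continuous character $\eta$ on a subgroup of $X_*(T)^{\Gamma\!_k}\otimes\R$, and simple-connectedness of this ambient real vector space provides a continuous lift of $\eta$ to $\C$, which pulls back to a continuous lift of $\chi$.

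The main obstacle will be the middle step: rigorously matching $\ker\delta$ with characters admitting a continuous $\C$-valued lift. Formal functoriality of LLC does much of the work, but constructing and verifying the companion isomorphism $\tn H^1(W_k, X^*(T)_\C) \cong \Hom_\tn{cts}(T(k), \C)$ requires care, since $X^*(T)_\C$ is not the Langlands dual of an algebraic torus in the traditional sense.
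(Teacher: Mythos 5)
Your overall strategy (identify the image with $\tn{LLC}(\ker\delta)$ for the exponential sequence, interpret that kernel as the characters of $T(k)$ admitting a continuous additive lift along $\exp(2\pi i\,\cdot)$, and then check topologically that these are exactly the unramified characters) is a legitimate alternative to the paper's argument, and your first and last steps are essentially sound. The gap is exactly where you anticipate it, and your proposed fix for it does not work: you claim the companion isomorphism (or at least a compatible surjection) $\tn H^1(W_k,X^*(T)_\C)\to\Hom_\tn{cts}(T(k),\C)$ ``can be proved by reduction to $T=\G_{\tn m}$ via Shapiro's lemma.'' Shapiro's lemma only handles \emph{induced} tori, i.e.\ products of Weil restrictions of $\G_{\tn m}$, because only then is $X^*(T)_\C$ an induced $W_k$-module. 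A general $k$-torus is not induced (e.g.\ a norm-one torus), and the entire difficulty of the statement lies in the passage from induced tori to general ones; for induced tori the lemma is already immediate from $W_k^\tn{ab}\simeq k^\times$. So as written, the crucial middle step is only established in the case where the result was already easy.

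To close the gap you need a dévissage: embed $T$ into an induced torus $S$, let $R=S/T$, and use the vanishing $\tn H^2(W_k,X^*(R)_\C)=0$ (\Cref{thm23}) to see that $\tn H^1(W_k,X^*(S)_\C)\to\tn H^1(W_k,X^*(T)_\C)$ is surjective; combined with the induced case and the fact that unramified (equivalently, continuously liftable) characters of $T(k)$ are exactly the restrictions of such characters of $S(k)$ (because $X_*(T)^{\Gamma\!_k}$ is a summand of $X_*(S)^{\Gamma\!_k}$, or because the lattice $T(k)/T(k)^\HC$ injects into $S(k)/S(k)^\HC$ and additive characters extend), this completes the argument. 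This is precisely the mechanism of the paper's proof, which works directly with images and restrictions and thereby never needs your companion isomorphism at all; note also that for your route only \emph{surjectivity} of the companion map is needed, and that defining it functorially for general $T$ and checking its compatibility with the Langlands correspondence for tori (which is itself constructed by resolutions by induced tori) is additional work you would have to carry out. Two minor points: your lifting argument for an unramified character should lift $\eta$ from the closed subgroup $\theta_T(T(k))$ (a lattice in the nonarchimedean case), not from the ambient vector space, which is harmless but should be said correctly; and in the archimedean case all of this must be phrased in Borel cohomology, where the long exact sequence is available by Moore's theory as used in the paper.
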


\begin{proof}
When $T=\G_{\tn m}$ the claim is clear, and
Shapiro's lemma then proves the claim
for any induced torus.
In general, embed $T$ in an induced torus~$S$.
Using the diagram
\begin{center}
\begin{tikzcd}
\tn H^1(W_k,X^*(S)_\C)
\dar[twoheadrightarrow]\rar &
\tn H^1(W_k,\widehat S) \dar\rar{\tn{LLC}} &
\Hom_\tn{cts}(S(k),\C^\times) \dar \\
\tn H^1(W_k,X^*(T)_\C) \rar &
\tn H^1(W_k,\widehat T) \rar{\tn{LLC}} &
\Hom_\tn{cts}(T(k),\C^\times)
\end{tikzcd}
\end{center}
and the fact that the left vertical arrow
is a surjection by \Cref{thm23},
it suffices to show that
the unramified characters of~$T(k)$
are precisely the restrictions
of the unramified characters of~$S(k)$.
Clearly restriction preserves unramification,
and every unramified character of~$T(k)$
extends to an unramified character of~$S(k)$
because $X_*(T)^{\Gamma\!_k}$
is a summand of $X_*(S)^{\Gamma\!_k}$.
\end{proof}

The universal cover exact sequence for $\widehat T$
expands, by \Cref{thm23}, to the exact sequence
\begin{center}
\begin{tikzcd}
\tn H^1(W_k,X^*(T)_\C) \rar &
\tn H^1(W_k,\widehat T) \rar &
\tn H^2(W_k,X^*(T)) \rar &
0.
\end{tikzcd}
\end{center}
At the same time, there is a canonical identification
\[
\coker\bigl(\Hom(X_*(T)_\R^{\Gamma\!_k},\C^\times)
\to\Hom_\tn{cts}(T(k),\C^\times)\bigr)
\simeq \Hom_\tn{cts}(T(k)^\HC,\C^\times).
\]
Combining these two facts
yields a cohomological description
of the character group of~$T(k)^\HC$.

\begin{corollary}
Let $T$ be a $k$-torus.
The local Langlands correspondence
and universal cover coboundary
furnish an isomorphism
(in Borel cohomology if $k$ is archimedean)
\[
\tn H^2(W_k,X^*(T)) \simeq \Hom_\tn{cts}(T(k)^\HC,\C^\times).
\]
\end{corollary}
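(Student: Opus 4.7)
The plan is to combine the two displayed facts immediately preceding the corollary into a commutative ladder of right-exact sequences and read off the isomorphism from the induced map on cokernels. On the top row I place the exact sequence
\[
\tn H^1(W_k,X^*(T)_\C) \to \tn H^1(W_k,\widehat T) \to \tn H^2(W_k,X^*(T)) \to 0
\]
coming from the universal-cover sequence for $\widehat T$ via \Cref{thm23}; on the bottom row I place the right-exact sequence
\[
\Hom(X_*(T)_\R^{\Gamma\!_k},\C^\times) \to \Hom_\tn{cts}(T(k),\C^\times) \to \Hom_\tn{cts}(T(k)^\HC,\C^\times) \to 0
\]
coming from the displayed cokernel identification. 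These two rows are joined by the local Langlands correspondence for tori in the middle column, which is an isomorphism (the Borel-cohomology version in the archimedean case), and by the natural inflation map along $\theta_T$ on the left column.

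The only nontrivial step is to check that the two left-hand arrows have the same image under the middle LLC isomorphism. This is exactly the content of \Cref{thm30}: the image of $\tn H^1(W_k,X^*(T)_\C) \to \tn H^1(W_k,\widehat T)$ under LLC is the subgroup of unramified characters of~$T(k)$, and this subgroup is precisely the image of the bottom-left arrow by the definition of unramification as factorization through~$\theta_T$. Once this matching of images is in hand, a short diagram chase on the two right-exact sequences identifies the cokernels and produces the desired isomorphism
\[
\tn H^2(W_k,X^*(T)) \simeq \Hom_\tn{cts}(T(k)^\HC,\C^\times).
\]

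I expect no real obstacle: the substantive work has already been packaged into \Cref{thm23}, \Cref{thm30}, and the cokernel identification preceding the corollary, so what remains is a formal diagram chase. The only mildly delicate point is to keep track of Borel versus continuous cohomology in the archimedean case, but this is already built into the statements of the inputs and the proof then goes through uniformly for all local fields~$k$.
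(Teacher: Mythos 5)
Your argument is correct and is essentially the paper's own: the corollary is obtained by viewing $\tn H^2(W_k,X^*(T))$ as the cokernel of $\tn H^1(W_k,X^*(T)_\C)\to\tn H^1(W_k,\widehat T)$ via \Cref{thm23}, using \Cref{thm30} to match that image under the local Langlands correspondence with the unramified characters (the image of $\Hom(X_*(T)_\R^{\Gamma\!_k},\C^\times)$), and then invoking the cokernel identification with $\Hom_\tn{cts}(T(k)^\HC,\C^\times)$. Nothing further is needed.
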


\subsection{Evaluation at an involution}
\label{sec:cc:ei}
Let $T$ be a $k$-torus.
Every Galois-equivariant homomorphism
$X^*(T) \to \{\pm1\}$ induces a map
\[
\tn H^2(W_k,X^*(T)) \to \tn H^2(W_k,\{\pm1\}).
\]
The source is the character group of~$T(k)^\HC$
and the target is a group of order two.
It follows by Pontryagin duality
that this map is evaluation of characters
at an involution.
The goal of this subsection is to show that
when $T=\G_{\tn m}$ and the map
$X^*(T)=\Z\to\{\pm1\}$ is nontrivial,
this involution is nontrivial.
Although the claim is quite weak,
it is all that is needed for our computation
of $e_{\rep,*}\varphi^*(c_G)$.

When $k$ is nonarchimedean of characteristic zero
we can describe $\tn H^2(W_k,X^*(T))$
using the cup-product pairing
of Tate duality,
as in \Cref{app:karpuk},
and the claim follows immediately from
the naturality of that pairing.
But in general there is no such
naturality statement compatible with our
coboundary description of $\tn H^2(W_k,X^*(T))$.
Instead, we prove nontriviality
using an argument with long exact sequences
that rests on the following computation.

\begin{lemma} \label{thm27}
Let $A$ be a $W_k$-module which is
finitely generated as an abelian group.
\begin{enumerate}
\item
If $k$ is archimedean then
$\tn H_\tn{Borel}^3(W_k,A)$ is torsionfree.
\item
If $k$ is nonarchimedean then
$\tn H^3(W_k,A)$ is a $p$-group.
\end{enumerate}
\end{lemma}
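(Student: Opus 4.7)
The strategy is to reduce to $A$ torsion-free via the short exact sequence $0 \to A_{\tn{tors}} \to A \to A/A_{\tn{tors}} \to 0$, and then to exploit the short exact sequence
\[
0 \to A/A_{\tn{tors}} \to (A/A_{\tn{tors}}) \otimes_{\Z} \Q \to (A/A_{\tn{tors}}) \otimes_{\Z} \Q/\Z \to 0
\]
together with the vanishing results of \Cref{thm23}.

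\textbf{Nonarchimedean case.} Combining the long exact sequence with \Cref{thm23} yields $\tn H^3(W_k, A/A_{\tn{tors}}) \cong \tn H^2(W_k, (A/A_{\tn{tors}}) \otimes \Q/\Z)$. Decomposing the coefficient group into $\ell$-primary parts reduces the problem to $\tn H^2(W_k, A \otimes \Q_\ell/\Z_\ell)$ for each prime $\ell$. For $\ell \neq p$, I would apply the Hochschild--Serre spectral sequence for $1 \to I_k \to W_k \to \Z \to 1$: since $\Z$ has cohomological dimension $1$ and the $\ell$-Sylow subgroup of $I_k$ (the pro-$\ell$ quotient of tame inertia) is topologically cyclic of dimension $1$, the group $W_k$ has $\ell$-cohomological dimension at most $2$. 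One verifies that the $\ell \neq p$ contributions to $\tn H^2$ vanish by noting that Frobenius acts on tame inertia via the cyclotomic character (by $q = |\mathcal O_k/\mathfrak m_k|$), so its eigenvalues on the divisible $\ell$-primary modules in question are never $1$, killing the relevant coinvariant term. The torsion piece $\tn H^3(W_k, A_{\tn{tors}})$ is $p$-primary by the same cohomological dimension argument.

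\textbf{Archimedean case.} Invoke Wigner's theorem (\Cref{thm3}) to identify $\tn H^i_{\tn{Borel}}(W_k, A) = \tn H^i_{\tn{sing}}(BW_k, A)$, and compute the classifying space cohomology directly. For $k = \C$, the space $BW_\C = B\C^\times = \C P^\infty$ has integral cohomology $\Z[x]$ with $|x| = 2$, so $\tn H^3 = 0$ and $\tn H^4 = \Z$; since $W_\C$ is connected, every continuous action on a discrete $A$ is trivial, and the universal coefficient theorem forces $\tn H^3(BW_\C, A) = 0$. For $k = \R$, the decomposition $W_\R \cong \R_{>0} \times \Pin^-(2)$ yields $BW_\R \simeq B\Pin^-(2)$. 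Using the fibration $\R P^2 \to B\Pin^-(2) \to BS^3$ arising from $\Pin^-(2) = N_{\mathrm{SU}(2)}(\mathrm{U}(1))$, a Serre spectral sequence computation gives $\tn H^3(BW_\R, \Z) = 0$ and $\tn H^4(BW_\R, \Z) = \Z$; a twisted-coefficient version treats nontrivial local systems, which factor through $\pi_0(W_\R) = \Z/2$. The conclusion is $\tn H^3_{\tn{Borel}}(W_\R, A) = 0$, a fortiori torsion-free.

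\textbf{Main obstacle.} The most delicate step is the $k = \R$ archimedean case with nontrivial coefficients, where tracking Serre spectral sequence differentials on $B\Pin^-(2)$ with twisted coefficients requires care. An abstract alternative avoids this explicit computation: since the short exact sequence from the opening already presents $\tn H^3_{\tn{Borel}}(W_k, A)$ as a torsion quotient of $\tn H^2(W_k, A \otimes \Q/\Z)$, torsion-freeness is equivalent to vanishing of this cokernel, a criterion amenable to uniform verification.
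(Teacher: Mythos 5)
Your nonarchimedean argument is essentially sound, but it takes a genuinely different route from the paper for the prime-to-$p$ vanishing: where you compute directly with the Hochschild--Serre sequence for $1\to I_k\to W_k\to\Z\to1$ and kill the term $\tn H^1(\Z,\tn H^1(I_k,A\otimes\Q_\ell/\Z_\ell))$ by a Frobenius-eigenvalue argument, the paper instead dualizes each $\tn H^2(W_k,P[n])$ via Weil--Tate duality (\Cref{thm14}), passes to the limit, and shows $\Hom_{W_k}(P,\mu)=0$, which has the advantage of working uniformly (and of reusing a result the paper needs anyway); your route avoids duality and is more self-contained. One point you leave implicit is the heart of the matter and should be spelled out: the reason no Frobenius eigenvalue equals $1$ is not just the cyclotomic twist by $q$, but the conjunction of that twist with the fact that Frobenius acts on $A$ through $\Aut(A)$ for $A$ finitely generated over $\Z$, so its eigenvalues are algebraic units, whereas $q$ is not a unit. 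Without this use of finite generation the claim is false (e.g.\ the coefficient module $\Q_\ell/\Z_\ell(1)$ has nonvanishing $\tn H^2$), so it cannot be omitted. You should also record that $\tn H^1(I_k,A\otimes\Q_\ell/\Z_\ell)$ is divisible (from $\mathrm{cd}_\ell(I_k)=1$), so that ``no eigenvalue $1$'' really does force the Frobenius coinvariants to vanish and not merely to be finite.

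The archimedean case, for $k=\R$, has a genuine gap, which you yourself flag: everything you actually compute ($\tn H^\bullet$ of $B\Pin^-(2)$ via the fibration over $BS^3$) is for constant coefficients, but the statement concerns an arbitrary finitely generated $W_\R$-module $A$, on which $\pi_0(W_\R)=\Gamma_{\!\R}$ may act nontrivially; moreover \Cref{thm3} as stated covers only trivial coefficients, so even the translation to singular cohomology with local coefficients needs justification. Your proposed reduction via $0\to A_\tn{tors}\to A\to A/A_\tn{tors}\to 0$ does not rescue this: torsion-freeness of the outer $\tn H^3$ terms does not imply torsion-freeness of the middle one (a nonzero torsion class in $\tn H^3_\tn{Borel}(W_\R,A)$ could come from $\tn H^3_\tn{Borel}(W_\R,A_\tn{tors})$), so what you actually need is the vanishing statements for twisted coefficients that you have deferred. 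The paper sidesteps exactly this by running the Hochschild--Serre sequence for $\C^\times\trianglelefteq W_\R$ as in \Cref{thm23} and citing Deligne's explicit computation (page~310 of \cite{deligne76}) that $\tn H^3_\tn{Borel}(W_\R,A)$ is a free $\Z$-module of finite type; either quote that computation or carry out the twisted spectral-sequence calculation (equivalently, verify the surjectivity of the map $\tn H^2_\tn{Borel}(W_\R,A_\Q)\to\tn H^2_\tn{Borel}(W_\R,A_\Q/A)$ you mention, together with the finite-coefficient case) before the real case can be considered proved.
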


A (possibly infinite) group is a \emph{$p$-group}
if each of its elements has order a power of~$p$.

\begin{proof}
When $k$ is archimedean,
we can argue as in the proof of \Cref{thm23}.
For $k=\C$ we conclude that
$\tn H_\tn{Borel}^3(W_\C,A)=0$
and for $k=\R$ we conclude,
using the calculation on page~310 of \cite{deligne76},
that $\tn H_\tn{Borel}^3(W_\R,A)$ is
a free $\Z$-module of finite type.

Next, assume $k$ is nonarchimedean.
When $\fieldchar k=0$,
Karpuk proved \cite[Section~3.2]{karpuk13}
the stronger statement
that $\tn H^i(W_k,A) = 0$ for all $i\geq3$.
A slight modification of his argument,
which we outline below, proves the \namecref{thm27}.

The Hochschild-Serre spectral sequence,
mentioned in the proof of \Cref{thm23},
gives the vanishing for $i\geq4$
and can be used to show, for $i=3$,
that $\tn H^2(W_k,P) \simeq \tn H^3(W_k,M)$,
where $P\defeq M_\Q/M$.
Let $P[n]$ denote the $n$-torsion subgroup of~$P$.
The cohomology of $W_k$ in $P$ can be computed
as a direct limit of the cohomologies
in the torsion subgroups:
\[
\tn H^2(W_k,P)
\simeq \varinjlim_n \tn H^2(W_k,P[n]).
\]
Since $P[n]$ decomposes as a direct sum
of its Sylow subgroups,
we can rewrite the direct limit as
\[
\tn H^2(W_k,P)
\simeq \tn H^2(W_k,P[p^\infty])
\oplus \varinjlim_{p\nmid n} \tn H^2(W_k,P[n])
\]
where $P[p^\infty]\defeq\bigcup_{n\geq0} P[p^n]$.
Since the group $P[p^\infty]$ is a $p$-group,
the cohomology group $\tn H^2(W_k,P[p^\infty])$
is a $p$-group as well.
It therefore suffices to show
that the second summand above vanishes.

By Tate duality (see \Cref{thm14}),
if $n$ is prime to~$p$ then
$\tn H^2(W_k,P[n])$ is dual to $\Hom_{W_k}(P[n],\mu)$
where $\mu$ is the group of roots of unity in~$\bar k$.
It follows that the second term above,
$\varinjlim_{p\nmid n} \tn H^2(W_k,P[n])$,
is dual to
\[
\Hom_{W_k}(P/P[p^\infty],\mu).
\]
Since $P[p^\infty]$ is a summand of~$P$,
it suffices to show that $\Hom_{W_k}(P,\mu) = 0$.
Karpuk has an argument for the vanishing
that works just as well when $\fieldchar k > 0$.
\end{proof}

\Cref{thm27} together with a long exact sequence argument
proves our desired nontriviality.

\begin{corollary} \label{thm28}
Let $\ell$ be a finite separable extension of~$k$,
let $M = \Ind_{\ell/k}\Z$,
and let $f\colon M\to\{\pm1\}$ be
the unique nontrivial homomorphism.
If $\fieldchar k\neq 2$ then the induced map
\[
f_*: \tn H^2(W_k,M) \to
\tn H^2(W_k,\{\pm1\})
\]
(taken in Borel cohomology if $k$ is archimedean)
is nontrivial.
\end{corollary}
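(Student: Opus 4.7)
The plan is to exploit the long exact sequence associated with the short exact sequence
$$0 \to \ker f \to M \xrightarrow{f} \{\pm 1\} \to 0$$
of $W_k$-modules. Since all three groups are discrete, the surjection $f$ admits a (trivially continuous) set-theoretic section, so the sequence induces a long exact sequence in continuous cohomology when $k$ is nonarchimedean and in Borel cohomology when $k$ is archimedean. Extracting the relevant segment,
$$\tn H^2(W_k, M) \xrightarrow{f_*} \tn H^2(W_k, \{\pm 1\}) \xrightarrow{\delta} \tn H^3(W_k, \ker f),$$
and remembering that the middle group has order two (cf.~\Cref{thm19}), the nontriviality of $f_*$ is equivalent to the vanishing of the connecting map~$\delta$, and I would prove the latter.

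Because the source of~$\delta$ is $2$-torsion, its image is too, so it suffices to show that $\tn H^3(W_k, \ker f)$ has no element of order two. The module $\ker f$ is finitely generated as an abelian group, so \Cref{thm27} applies: in the archimedean case the group is torsion-free, and in the nonarchimedean case with $\fieldchar k = p > 0$ it is a $p$-group, necessarily with $p \neq 2$ by hypothesis. In either situation the desired vanishing is immediate.

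The main obstacle will be the remaining case, $k$ nonarchimedean of characteristic zero with residue characteristic two, where the ``$p$-group'' statement of \Cref{thm27} alone permits $2$-torsion. To clear this obstacle I would invoke the stronger result of Karpuk already used inside the proof of \Cref{thm27}, namely that $\tn H^i(W_k, A) = 0$ for all $i \geq 3$ and every finitely generated $W_k$-module~$A$ when $\fieldchar k = 0$. Applied to $A = \ker f$, this forces $\tn H^3(W_k, \ker f) = 0$ outright, so $\delta$ vanishes and $f_*$ is surjective, hence nontrivial.
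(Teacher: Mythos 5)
Your proof is correct and is essentially the paper's own argument: both rest on a long exact sequence together with \Cref{thm27} (supplemented by Karpuk's vanishing of $\tn H^{\geq 3}(W_k,-)$ when $\fieldchar k=0$, which you make explicit for residue characteristic two), the only cosmetic difference being that the paper first factors $f$ through the augmentation $M\to\Z$ and uses the sequence $0\to\Z\xrightarrow{2}\Z\to\{\pm1\}\to 0$, whereas you apply the long exact sequence directly to $0\to\ker f\to M\to\{\pm1\}\to 0$. In either form the conclusion is the same: the connecting map into the degree-three group vanishes because that group has no $2$-torsion, so $f_*$ is surjective onto the order-two group $\tn H^2(W_k,\{\pm1\})$.
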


\begin{proof}
Since $f$ factors through
the augmentation map $M\to\Z$,
it suffices to prove the \namecref{thm28}
in the case where $\ell=k$ and $M=\Z$.
The long exact sequence for
the short exact sequence
$1\to\Z\xto2\Z\to\{\pm1\}\to1$
yields a coboundary map
\[
\tn H^2(W_k,\{\pm1\})\to\tn H^3(W_k,\Z) 
\]
whose cokernel measures
the failure of surjectivity.
But by \Cref{thm27} the coboundary map
must be trivial:
there are no nontrivial homomorphisms
between $\{\pm1\}$ and a torsionfree group
or a $p$-group with $p\neq 2$.
\end{proof}

\subsection{Involutions from representations of
\texorpdfstring{$L$}{L}-groups}
\label{sec:cc:inv}
In this subsection we explain how to construct
from an orthogonal representation~$\rep$ of~${}^LG$
a central involution $z_\rep\in Z(k)$,
generalizing Gross and Reeder's canonical involution.
We then relate this involution
to our description of the character group of~$Z(k)^\HC$.

The involution $z_\rep$ depends only on
the restriction of~$\rep$ to~$\widehat G$.
This restriction is an algebraic representation
of a complex reductive group and can thus be understood
through its weights.
We compute the weights of the representation
with respect to a fixed, Galois-stable maximal torus
$\widehat T$ of~$\widehat G$,
which is dual to a minimal Levi~$T$
of the quasi-split group~$G$.
Since $\rep|_{\widehat G}$ is the restriction
of a representation of the $L$-group,
the multiset of its weights is Galois-stable.
Moreover, each weight can be interpreted
as a coweight of~$T$.

Let $m\colon X_*(T)\to\N$ be
a Galois-invariant multiset of weights.
In our application $m$ will be
the multiset of weights of a representation of~${}^LG$,
but the greater generality is convenient for some proofs.
As in \Cref{sec:spin:crit}, choose a gauge
$X^*(\widehat T)\setminus\{0\} \to\{\pm1\}$.
This time we require the gauge to be Galois-invariant,
and the requirement can be met
because the Galois action preserves
some pinning containing~$\widehat T$.
Define
\[
z_m \defeq \sum_{0<\varpi\in X_*(T)} \varpi(-1)^{m(\varpi)}.
\]
The involution~$z_m$ is independent of the choice of gauge.
For $\rep$ a representation of~${}^LG$ we set
$z_\rep\defeq z_m$ where $m(\varpi)$
is the multiplicity of $\varpi$ in $\rep|_{\widehat T}$.

In this setting, the generalization
of the spin character of \Cref{sec:spin:char}
is the character
$e_m\colon\pi_1(\widehat G)\simeq X^*(T)\to\{\pm1\}$
defined by the formula
\[
e_m(\lambda) = \prod_{0<\varpi\in X_*(T)}
(-1)^{m(\varpi)\langle\lambda,\varpi\rangle}.
\]

\begin{lemma} \label{thm25}
Let $m\colon X_*(T)\to\N$ be a Galois-invariant multiset of weights.
If $G$ has connected center then the composite map
\begin{center}
\begin{tikzcd}
\Hom_\tn{cts}(Z(k)^\HC,\C^\times) \rar{\simeq} &
\tn H^2(W_k,\pi_1(\widehat G)) \rar{e_{m,*}} &
\tn H^2(W_k,\{\pm1\}) \rar{\sgn} & \{\pm1\}
\end{tikzcd}
\end{center}
(taken in Borel cohomology if $k$ is archimedean)
is evaluation at~$z_m$.
\end{lemma}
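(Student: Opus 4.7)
The plan is a functorial descent to~$\G_{\tn m}$. By Pontryagin duality for the locally compact abelian group $Z(k)^\HC$, any continuous homomorphism from $\Hom_\tn{cts}(Z(k)^\HC,\C^\times)$ to $\{\pm1\}$ is evaluation at a unique element of $Z(k)^\HC$ of order dividing~$2$, so it will suffice to identify the composite in the lemma as evaluation at~$z_m$.

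I would first reduce from~$Z$ to the minimal Levi~$T$. The inclusion $Z(k)^\HC\into T(k)^\HC$ is closed, so Pontryagin duality makes restriction $\Hom_\tn{cts}(T(k)^\HC,\C^\times)\twoheadrightarrow\Hom_\tn{cts}(Z(k)^\HC,\C^\times)$ surjective. The corollary of \Cref{sec:cc:hc} is natural in the torus: restriction of characters from~$T(k)^\HC$ to~$Z(k)^\HC$ corresponds to pushforward on cohomology along the quotient $q\colon X^*(T)\twoheadrightarrow X^*(Z)$. Setting $\tilde e_m\defeq e_m\circ q\colon X^*(T)\to\{\pm1\}$, one thus reduces to the $T$-analog of the lemma: the composite using $\tilde e_m$ in place of~$e_m$ and $T(k)^\HC$ in place of~$Z(k)^\HC$ should be evaluation at~$z_m\in T(k)^\HC$.

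Next I would reduce from~$T$ to~$\G_{\tn m}$ via the cocharacter $v_m\defeq\sum_{0<\varpi}m(\varpi)\varpi\in X_*(T)$. Since $G$ is quasi-split the Galois action preserves a pinning of~$\widehat G$, so a Galois-stable gauge exists; thus $v_m$ lies in $X_*(T)^{\Gamma\!_k}$, meaning $v_m\colon\G_{\tn m}\to T$ is defined over~$k$. The identity $\tilde e_m(\lambda)=(-1)^{\langle\lambda,v_m\rangle}$ displays $\tilde e_m$ as the composite $X^*(T)\xto{v_m^*}\Z\xto{\bmod 2}\{\pm1\}$. Naturality of the corollary along~$v_m$, together with the observations that $v_m(\cal O_k^\times)\subseteq T(k)^\HC$ and $v_m(-1)=z_m$, will reduce the claim to the $\G_{\tn m}$-case: the composite
\[
\Hom_\tn{cts}(\cal O_k^\times,\C^\times)\xto{\simeq}
\tn H^2(W_k,\Z)\xto{\bmod 2}
\tn H^2(W_k,\{\pm1\})\xto{\sgn}\{\pm1\}
\]
should be evaluation at~$-1\in\cal O_k^\times$.

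The $\G_{\tn m}$-case is then immediate from \Cref{thm28}. By Pontryagin duality the composite is evaluation at some order-$\leq2$ element of~$\cal O_k^\times$, which is~$\pm1$ since $\fieldchar k\neq 2$; and by \Cref{thm28} applied with $\ell=k$, the middle reduction-mod-$2$ map is nontrivial, so the composite must be nontrivial and hence evaluation at~$-1$. The hard part will be verifying the two naturality assertions used above; these follow from the functoriality of the LLC for tori, of the coboundary of the exponential sequence, and of the passage to the Harish-Chandra quotient by unramified characters.
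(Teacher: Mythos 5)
Your proposal is correct, and it reaches the same endgame as the paper (Pontryagin duality, nontriviality via \Cref{thm28}, uniqueness of the element of order two), but by a genuinely different reduction. The paper first uses additivity to reduce to a single multiplicity-one Galois orbit $\Omega$, factors $e_m$ through the induced module $\Z[\Omega]$, and routes the computation through the induced torus $S$ with $X^*(S)=\Z[\Omega]$ and $S(k)\simeq\ell^\times$, so it needs \Cref{thm28} for a general separable extension $\ell/k$. You instead package the whole multiset into the single Galois-fixed cocharacter $v_m=\sum_{0<\varpi}m(\varpi)\varpi$, reduce along the resulting $k$-morphism $\G_{\tn m}\to T$ (after an explicit $Z$-to-$T$ step), and then need only the case $\ell=k$ of \Cref{thm28} --- which is in fact the case to which the paper's own proof of \Cref{thm28} reduces. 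Your route buys the absence of any orbit decomposition and of induced tori; its cost is that you invoke naturality of the identification $\tn H^2(W_k,X^*(\cdot))\simeq\Hom_\tn{cts}((\cdot)(k)^\HC,\C^\times)$ for two morphisms of tori ($Z\into T$ and $v_m\colon\G_{\tn m}\to T$) rather than one, but this is the same functoriality (LLC for tori, exponential coboundary, unramified quotient) that the paper also uses without proof. Two small points to tidy: in the $Z$-to-$T$ step, deducing ``evaluation at $z_m$'' on characters of $Z(k)^\HC$ from the $T$-statement requires $z_m\in Z(k)^\HC$; this holds precisely because $e_m$ factors through $\pi_1(\widehat G)\simeq X^*(Z)$ (equivalently, $\langle\alpha,v_m\rangle$ is even for every root $\alpha$), which is what makes the lemma well posed, but it should be said. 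And your final display is nonarchimedean-specific: for archimedean $k$ replace $\cal O_k^\times$ by $\G_{\tn m}(k)^\HC$ (that is, $\{\pm1\}$ or the unit circle) and work in Borel cohomology; the argument is unchanged, since $-1$ is still the unique element of order two there.
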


\begin{proof}
By additivity it suffices to consider the case
where $m$ is a (multiplicity-one) Galois orbit~$\Omega$.
After choosing a gauge, we can factor
$e_m\colon X^*(T)\to\{\pm1\}$ as the composition
\begin{center}
\begin{tikzcd}
X^*(T) \rar &
\Z[\Omega] \rar &
\{\pm1\}
\end{tikzcd}
\end{center}
of the map
\[
\lambda\mapsto\sum_{\varpi\in\Omega}
\langle\lambda,\varpi\rangle[\varpi]
\]
and the mod-two augmentation map $\Z[\Omega]\to\{\pm1\}$.

Apply the functor $\tn H^2(W_k,-)$ to this composition.
By naturality of the coboundary,
the image of the first map is isomorphic to
the restriction map
\[
\Hom_\tn{cts}(T(k)^\HC,\C^\times)
\to \Hom_\tn{cts}(S(k)^\HC,\C^\times)
\]
where $S$ is the $k$-torus with character group~$\Z[\Omega]$.
The image of the second map is a homomorphism
\[
\Hom_\tn{cts}(S(k)^\HC,\C^\times) \to\{\pm1\}
\]
which is nontrivial by \Cref{thm28}.
And by Pontryagin duality this map must be
evaluation at an element of order two.
Since $S(k)\simeq\ell^\times$
where $\ell$ is the fixed field of any element of~$\Omega$,
the group $S(k)$ has a unique element of order two,
namely~$-1$.
The \namecref{thm25} follows.
\end{proof}

Let $\chi_\varphi$ denote Langlands's central character,
the construction of which is summarized
in Borel's Corvallis article
\cite[10.1]{borel_corvallis2},

\begin{theorem} \label{thm29}
Let $\varphi\colon W_k\to{}^LG$ be an $L$-parameter,
let $\rep\colon {}^LG\to\tn O(V)$
be a complex representation of~${}^LG$,
and let $c_G\in\tn H_\tn{Borel}^2({}^LG,\pi_1(\widehat G))$
classify the extension $\widehat G_\tn{univ}\rtimes W_k$.
Then
\[
\sgn(e_{\rep,*}\varphi^*(c_G)) = \chi_\varphi(z_\rep).
\]
\end{theorem}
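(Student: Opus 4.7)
The plan is to combine the earlier identifications made in \Cref{sec:cc}. In particular, the corollary to \Cref{thm30} furnishes, in the connected-center case, a canonical isomorphism
\[
\tn H^2(W_k, \pi_1(\widehat G)) \simeq \Hom_\tn{cts}(Z(k)^\HC, \C^\times),
\]
while \Cref{thm25} identifies the composite map through $e_{\rep,*}$ and $\sgn$ with evaluation at $z_\rep$. Granting these, the only remaining task is to recognize $\varphi^*(c_G)$, viewed under the displayed isomorphism, as the restriction of Langlands's central character $\chi_\varphi$ to $Z(k)^\HC$.

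First I would reduce to the case where $Z$ is connected by the standard $z$-extension trick. Choose $1 \to Z_1 \to G' \to G \to 1$ with $G'$ of connected center and $Z_1$ an induced torus; lift $\varphi$ to a tempered parameter $\varphi'$ of $G'$ and inflate $\rep$ to a representation $\rep'$ of ${}^LG'$. By Langlands's definition, $\chi_\varphi$ is the restriction of $\chi_{\varphi'}$ to $Z(k) \hookrightarrow Z'(k)$; the involution $z_\rep$ is the image of $z_{\rep'}$ under this inclusion, since both are computed from the same multiset of weights under the identification $\widehat T \hookrightarrow \widehat{T'}$; and naturality of the spin character and of the universal-cover class $c_G$ under ${}^LG' \to {}^LG$ reduces the claim for $(\varphi,\rep)$ to that for $(\varphi',\rep')$.

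Next, assuming $Z$ is connected, the identification $\pi_1(\widehat G) \simeq X^*(Z)$ holds as $W_k$-modules. By \Cref{thm16}, $\varphi^*(c_G)$ is the coboundary of the crossed homomorphism $\varphi \colon W_k \to \widehat G$ for the universal-cover extension of $\widehat G$. Composing $\varphi$ with the projection to the cocenter $\widehat Z = \widehat G/\widehat G_{\tn{der}}$ and applying naturality of the coboundary against the morphism of extensions
\begin{center}
\begin{tikzcd}
1 \rar & \pi_1(\widehat G) \dar[equals]\rar & \widehat G_\tn{univ} \dar\rar & \widehat G \dar\rar & 1 \\
1 \rar & X^*(Z) \rar & \widehat Z_\tn{univ} \rar & \widehat Z \rar & 1
\end{tikzcd}
\end{center}
identifies $\varphi^*(c_G)$ with the coboundary of $\bar\varphi \colon W_k \to \widehat Z$ for the universal cover of $\widehat Z$. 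By the very definition of Langlands's central character, this class corresponds under the corollary to \Cref{thm30} to the restriction $\chi_\varphi|_{Z(k)^\HC}$: the corollary is assembled precisely from the local Langlands correspondence for tori together with the universal-cover coboundary.

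Finally I would apply \Cref{thm25} to conclude that $\sgn(e_{\rep,*}\varphi^*(c_G)) = \chi_\varphi|_{Z(k)^\HC}(z_\rep)$, which equals $\chi_\varphi(z_\rep)$ since $z_\rep$ has order two and hence lies in the compact subgroup $Z(k)^\HC$. The main obstacle, I expect, is the third paragraph: one must verify that the two a priori distinct ways of associating a character of $Z(k)^\HC$ to $\varphi$ — the coboundary for the universal cover of $\widehat G$, and local Langlands applied to the projection $\bar\varphi$ to $\widehat Z$ — actually agree. This is a diagram chase reducing to the definitions, but it requires care in aligning conventions, in particular the sign of the identification $\pi_1(\widehat G) \simeq X^*(Z)$ and the sign built into the universal-cover coboundary.
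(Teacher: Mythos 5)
Your connected-center argument is essentially the paper's own proof: identify $\varphi^*(c_G)$ via \Cref{thm16} with the coboundary of the cocycle $\varphi_0$ attached to $\varphi$, push it through the morphism from the universal cover of $\widehat G$ to that of $\widehat Z$ (using $\pi_1(\widehat G)\simeq X^*(Z)$), recognize the image in $\tn H^1(W_k,\widehat Z)$ as the cocycle defining Langlands's $\chi_\varphi$, and conclude with the corollary to \Cref{thm30} and \Cref{thm25}; the sign worries you raise at the end are harmless, since the identification used is the very one (LLC plus universal-cover coboundary) by which the corollary is built, and any residual ambiguity by inversion is invisible when evaluating at an element of order two.

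The genuine gap is in your reduction to connected center, which is set up in the wrong direction. Langlands defines $\chi_\varphi$ for disconnected $Z$ by \emph{embedding} $G$ into a group $G_1$ with connected center, so that dually one has a surjection $h\colon{}^LG_1\to{}^LG$, one \emph{lifts} $\varphi$ to $\varphi_1$ along $h$, and one sets $\chi_\varphi \defeq \chi_{\varphi_1}|_{Z(k)}$ using $Z(k)\subseteq Z_1(k)$. You instead posit a central extension $1\to Z_1\to G'\to G\to 1$ with $Z_1$ an induced torus and $G'$ of connected center. First, such a $G'$ need not exist: for $G=\SL_2$ every central extension by a torus is isomorphic to $\SL_2\times Z_1$, whose center is disconnected. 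Second, even granting existence, the maps go the wrong way for the claims you make: dually one gets an inclusion ${}^LG\into{}^LG'$ (so there is nothing to ``lift''), and the centers map by a surjection $Z'\to Z$, so there is no inclusion $Z(k)\into Z'(k)$ along which $\chi_\varphi$ could be the restriction of $\chi_{\varphi'}$; in particular your reduction never connects with the actual definition of $\chi_\varphi$. Replacing your $G'$ by the embedding $G\into G_1$ repairs this: one then needs the compatibility $h^*(c_G)=f_*(c_{G_1})$ for $f\colon\pi_1(\widehat G_1)\to\pi_1(\widehat G)$ (proved by lifting $h$ to the universal covers), the identity $e_\rep\circ f=e_{\rep\circ h}$, and the equality $\chi_\varphi(z_\rep)=\chi_{\varphi_1}(z_{\rep\circ h})$, after which the disconnected case follows from the connected one exactly as in the paper.
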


\begin{proof}
First, assume the center of~$G$ is connected.
Let $\varphi_0\colon W_k\to\widehat G$
denote the $1$-cocycle corresponding to~$\varphi$,
so that $\varphi(w) = \varphi_0(w)w$.
The morphism of short exact sequences
\begin{center}
\begin{tikzcd}
1 \rar &
\pi_1(\widehat G) \rar\dar[rotate=90]{\simeq} &
\widehat G_\tn{univ} \rar\dar &
\widehat G \rar\dar &
1 \\
1 \rar &
X^*(Z) \rar &
\hat{\frak z} \rar &
\widehat Z \rar &
1
\end{tikzcd}
\end{center}
gives rise to a commutative square
\begin{center}
\begin{tikzcd}
\tn H^1(W_k,\widehat G) \rar\dar &
\tn H^1(W_k,\widehat Z) \dar \\
\tn H^2(W_k,\pi_1(\widehat G)) \rar{\simeq} &
\tn H^2(W_k, X^*(Z)).
\end{tikzcd}
\end{center}
Pass $\varphi_0$ around this square.
By \Cref{thm16}, the left arrow
maps $\varphi_0$ to~$\varphi^*(c_G)$.
The top arrow maps $\varphi_0$
to the cocycle representing Langlands's character
$\chi_\varphi\colon Z(k)\to\C^\times$,
and the image of this cocycle under the right arrow
corresponds to the restriction of~$\chi_\varphi$
to~$Z(k)^\HC$.
Hence this restriction corresponds to~$\varphi^*(c_G)$.
The \namecref{thm29} now follows from \Cref{thm25}.

For a general~$G$, whose center need not be connected,
Langlands's construction of~$\chi_\varphi$
starts with a choice of embedding of~$G$
into a group~$G_1$ with connected center.
Let $Z_1$ be the center of~$G_1$.
It turns out that every $L$-parameter
$\varphi\colon W_k\to{}^LG$ lifts to an $L$-parameter
$\varphi_1\colon W_k\to{}^LG_1$.
One then defines $\chi_\varphi$
as the restriction to~$Z(k)$
of the character~$\chi_{\varphi_1}$ of~$Z_1(k)$.
The resulting character $\chi_\varphi$
is independent of the choice of lift
and the choice of~$G_1$.

The projection $\widehat G_1\to\widehat G$
induces the diagram
\begin{center}
\begin{tikzcd}[row sep=small]
&
\pi_1(G_1)
\rar[hookrightarrow] \arrow[dd, "f"]
\dlar[swap]{e_{\rep\circ h}} &
\widehat G_{1,\tn{univ}}\rtimes W_k
\rar[twoheadrightarrow] \arrow[dd, "g"] &
{}^LG_1 \arrow[dd, "h"] &
& (c_1 \defeq c_{G_1}) \\
\{\pm1\} & & & &
W_k \ular[swap]{\varphi_1} \dlar{\varphi} \\
&
\pi_1(G) \ular{e_\rep} \rar[hookrightarrow] &
\widehat G_\tn{univ}\rtimes W_k
\rar[twoheadrightarrow] &
{}^LG & & (c_G)
\end{tikzcd}
\end{center}
with exact rows.
Let $c_1\in\tn H^2({}^LG_1,\pi_1(G_1))$
classify the upper extension.
On the one hand, the existence of~$g$
implies that $h^*(c_G) = f_*(c_1)$,
from which we conclude that
\[
e_{\rep,*}\varphi^*(c_G)
= e_{\rep,*}\varphi_1^*\,h^*(c_G)
= e_{\rep,*}\varphi_1^*\,f_*(c_1)
= e_{\rep\circ h,*}\varphi_1^*(c_1). 
\]
On the other hand, its easy to see
from the definition of~$z_\rep$ that
\[
\chi_\varphi(z_\rep)
= \chi_{\varphi_1}(z_{\rep\circ h}).
\]
These calculations reduce the general
case to the previous one.
\end{proof}

\section{Synthesis} \label{sec:synth}
In this section we complete the proof
of \Cref{thm1} and explain its relationship
to conjectures of Hiraga, Ichino, and Ikeda.

\subsection{Proof of Theorem~A}
Let $G$ be a quasi-split reductive $k$-group,
let $\varphi\colon\WD_k\to{}^LG$ be a tempered $L$-parameter,
and let $\rep\colon{}^LG\to\tn O(V)$
be a complex orthogonal representation.
Since root numbers of orthogonal Weil-Deligne
representations are unaffected by semisimplification,
we replace $\varphi$ by its restriction
$\varphi\colon W_k\to{}^LG$ to the Weil group.
The natural action of the Weil group on~$\widehat G$
lifts to an action on its universal cover.
Using this action, apply \Cref{thm4} to the diagram
\begin{center}
\begin{tikzcd}
1 \rar &
\pi_1(\widehat G) \rar\dar{e_\rep} &
\widehat G_\tn{univ} \rar\dar &
\widehat G \rar\dar{\rep|_{\widehat G}} &
1 \\
1 \rar &
\{\pm1\} \rar &
\Pin(V) \rar &
\tn O(V) \rar &
1 &
(c_\tn{pin}),
\end{tikzcd}
\end{center}
taking $W=W_k$.
Here $c_\tn{pin}\in\tn H_\tn{Borel}^2(\tn O(V),\{\pm1\})$
classifies the (bottom) pin extension and
$c_G\in\tn H_\tn{Borel}^2({}^LG,\pi_1(\widehat G))$
classifies the extension $\widehat G_\tn{univ}\rtimes W_k$.
We conclude from the \namecref{thm4} that
\begin{equation} \label{thm20}
\rep^*(c_\tn{pin}) =
e_{\rep,*}(c_G)\cdot p^*\rep|_{W_k}^*(c_\tn{pin})
\end{equation}
where $p\colon{}^LG \to W$ is the projection.
Hence
\[
\varphi^*\rep^*(c_\tn{pin}) = \varphi^*e_{\rep,*}(c_G)\cdot
\varphi^*p^*\rep|_{W_k}^*(c_\tn{pin}),
\]
valued in $\tn H^2(W_k,\{\pm1\})$
for $k$ nonarchimedean and
in $\tn H_\tn{Borel}^2(W_k,\{\pm1\})$
for $k$ archimedean.
By our formulation of Deligne's theorem, \Cref{thm19},
\[
\frac{\omega(\varphi,\rep)}{\omega(\varphi,\det\rep)}
= \sgn\bigl(\varphi^*\rep^*(c_\tn{pin})\bigr).
\]
At the same time, letting
$\varphi_\tn{prin}\colon\WD_k\to{}^LG$
denote the principal parameter,
the composition $\rep|_{W_k}\circ p\circ\varphi$
is the restriction of~$\rep\circ\varphi_\tn{prin}$
to the Weil group.
Deligne's theorem again implies that
\[
\frac{\omega(\varphi_\tn{prin},\rep)}%
{\omega(\varphi_\tn{prin},\det\rep)}
= \sgn\bigl(\varphi^*p^*\rep|_{W_k}^*(c_\tn{pin})\bigr).
\]
Since $\widehat G$ is connected and $\rep$ is orthogonal,
$\det\rep$ restricts trivially to~$\widehat G$.
Hence 
\[
\omega(\varphi_\tn{prin},\det\rep)
= \omega(\varphi,\det\rep).
\]
All in all, then, \eqref{thm20} simplifies to
\[
\frac{\omega(\varphi,\rep)}%
{\omega(\varphi_\tn{prin},\rep)}
= \sgn\bigl(\varphi^*e_{\rep,*}(c_G)\bigr).
\]
Finally, \Cref{thm29} identifies
the righthand side with $\chi_\varphi(z_\rep)$.

\subsection{Application to Plancherel measure}
Let $G$ be a reductive $k$-group
and let $M$ be a Levi subgroup of~$G$.
From this data we can form
an orthogonal representation~$r_M$ of~${}^LM$
which one might call the relative adjoint
representation for $G\supseteq M$,
namely, the representation
\[
\Lie(\widehat G)/\Lie(Z(\widehat M)^{\Gamma\!_k}).
\]
The nonzero weights of this representation on~$\widehat M$
are the elements of the root system of~$\widehat G$.

Now let $\varphi\colon\WD_k\to{}^LM$
be a tempered discrete $L$-parameter for~$M$
and $\pi$ a representation of~$G(k)$
in the $L$-packet of~$\varphi$.
Hiraga, Ichino, and Ikeda predicted
a relationship between the gamma factor
of the Weil-Deligne representation
$r_M\circ\varphi$ and
the Plancherel measure on the part
of the tempered dual of~$G(k)$
coming from parabolic inductions
of unramified twists of~$\pi$
\cite[Conjecture~1.5]{hiraga_ichino_ikeda08}.
\Cref{thm1} largely computes the root number
of this Weil-Deligne representation:
\[
\omega(\varphi,r_M)
= \omega(\varphi_\tn{prin},r_M)
\cdot \pi(z_\tn{Ad})
\]
where $z_\tn{Ad}$ is Gross and Reeder's
canonical involution for~$G$ (not~$M$).

\appendix
\section{Characters via Weil-Tate duality}
\label{app:karpuk}

Let $T$ be a finite type $k$-group
of multiplicative type.
Our description of $\tn H^2(W_k,X^*(T))$
in \Cref{sec:cc} when $T$ is a torus
brings to mind a related and more classical result,
Tate duality, which in this case
describes $\tn H^2(\Gamma\!_k,X^*(T))$
as the character group of the profinite
completion $T(k)_\tn{pro}$ of~$T(k)$.
This conclusion holds more generally
whenever $T$ is reduced.
By analogy, one would hope that
our description of $\tn H^2(W_k,X^*(T))$
from \Cref{sec:cc:hc}
as the character group of $T(k)^\HC$
could be strengthened by relaxing
the hypothesis that $T$ is a torus
to the hypothesis that $T$ is reduced.

In this appendix we provide partial
evidence, in \Cref{thm14},
for this hope by giving
such a description in two cases,
both for nonarchimedean~$k$:
when $\fieldchar k$ is arbitrary
and $T$ is finite and reduced,
and when $\fieldchar k=0$ and $T$ is arbitrary.
Our starting point is Karpuk's thesis \cite{karpuk13},
which constructs a cup-product pairing
generalizing Tate duality to the Weil group,
provided that $\fieldchar k=0$.
After explaining how this works,
we will have two identifications
of $\tn H^2(W_k,X^*(T))$ with a character group
when $T$ is a torus,
one by coboundary and one by cup product.
We then show that the two identifications
are inverses of each other.

To compare the Harish-Chandra subgroup
with Karpuk's formulation of Weil-Tate duality,
we need a small lemma on (metric) completions.
Let $\ell=\breve k$ be the completion of
the maximal unramified extension of~$k$
and let $\bar\ell$ be a separable closure of~$\ell$.
By Krasner's lemma, $\Gamma_\ell = I_k$.

\begin{lemma} \label{thm15}
An element $a\in\bar\ell$ is algebraic over~$k$
if and only if its Galois orbit is finite.
\end{lemma}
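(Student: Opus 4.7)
The plan is to interpret ``Galois orbit'' as the $W_k$-orbit on~$\bar\ell$, where $W_k$ acts as follows: its subgroup $I_k=\Gamma_\ell$ acts as the Galois group of~$\bar\ell/\ell$, while a Frobenius lift acts on~$k^\tn{unr}$ in the usual way, extends continuously to~$\breve k=\ell$, and then extends (non-uniquely, but canonically up to~$I_k$) to an automorphism of the separable closure~$\bar\ell$. Because $I_k\subseteq W_k$, the resulting orbits are independent of all the choices made.

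The forward implication is immediate: if $a$ is algebraic over~$k$, then $a$ lies in an embedded separable closure $\bar k\subseteq\bar\ell$, and its $W_k$-orbit is contained in the finite set of $k$-conjugates of~$a$, namely the roots of its minimal polynomial.

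For the converse, let $\{a_1,\dots,a_n\}$ be the $W_k$-orbit of~$a$ and form the monic polynomial $f(x)=\prod_{i=1}^n(x-a_i)\in\bar\ell[x]$. Its coefficients, being elementary symmetric functions of the orbit, are $W_k$-invariant. The crux is then to show that $\bar\ell^{W_k}=k$, for once this is granted we have $f\in k[x]$ and so $a$ is algebraic over~$k$. To identify the fixed field, I would argue in two steps: first $\bar\ell^{W_k}\subseteq\bar\ell^{I_k}=\ell$ using $\Gamma_\ell=I_k$; then the fixed field of a Frobenius lift on~$\breve k$ is~$k$, because the Frobenius fixes exactly~$k$ on the dense subfield~$k^\tn{unr}\subseteq\breve k$, acts continuously on~$\breve k$, and $k$ is closed (being complete) in~$\breve k$.

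The only genuinely nonformal point is this last one, that the Frobenius fixed field on~$\breve k$ is no larger than~$k$; everything else is bookkeeping with the exact sequence $1\to I_k\to W_k\to\langle\tn{Frob}\rangle\to1$ and continuity of the Galois action.
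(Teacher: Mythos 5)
Your reduction coincides with the paper's: interpret the orbit as the $W_k$-orbit, form the monic polynomial whose root set is the orbit, note that its coefficients are invariant, and reduce to the statement that the invariants of $\bar\ell$ are exactly $k$; the inertia step $\bar\ell^{I_k}=\ell$ is also fine. The paper settles the remaining point --- that the fixed field of a Frobenius lift $\sigma$ on $\breve k$ is $k$ --- by citing a theorem of Tate (and, in positive characteristic, a lemma from Serre's \emph{Corps locaux}). This is exactly where your proposal has a genuine gap: that fact does \emph{not} follow from ``$\sigma$ fixes exactly $k$ on the dense subfield $k^{\tn{unr}}$, $\sigma$ is continuous, and $k$ is closed in $\breve k$.'' Those three facts only say that the fixed set is a closed subfield of $\breve k$ containing $k$ whose intersection with $k^{\tn{unr}}$ is $k$; a closed subset can meet a dense subfield in a small set and still be much larger, and nothing you have stated rules out a $\sigma$-fixed element of $\breve k$ transcendental over $k$ --- which is precisely what the lemma must exclude. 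If you try to run the density argument directly (take $x$ fixed and $x_n\in k^{\tn{unr}}$ with $x_n\to x$), you only learn that $\sigma x_n-x_n\to 0$; since the $x_n$ are not themselves fixed, you cannot place them in $k$, and the argument stalls.

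What is actually needed is a quantitative refinement: if $y\in\cal O_{\breve k}$ satisfies $\sigma y\equiv y$ modulo $\frak m^n$, then $y$ is congruent modulo $\frak m^n$ to an element of $\cal O_k$. Concretely one argues by successive approximation: for $x\in\cal O_{\breve k}$ with $\sigma x=x$, the residue of $x$ in $\bar\F_q$ is fixed by the $q$-power Frobenius and hence lies in $\F_q$; subtract a lift $a_0\in\cal O_k$, divide by a uniformizer $\pi$ of $k$ (which is $\sigma$-fixed since $\breve k/k$ is unramified), and iterate; completeness of $k$ then gives $x=\sum_i a_i\pi^i\in\cal O_k$, and the general case follows by scaling by a power of $\pi$. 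This successive-approximation argument (or simply citing Tate's theorem and Serre's lemma, as the paper does) is the true content of the step you flagged as ``nonformal''; the topological inference you offered in its place is not a proof.
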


\begin{proof}
The forward implication is clear.
For the reverse implication,
consider the polynomial $f(x)\in\bar\ell[x]$
whose set of roots is the Galois orbit of~$a$.
Since the coefficients of~$f$ are Galois-invariant,
it suffices to show that
$k$ is the set of Galois-invariant elements of~$\bar\ell$.
This claim is a theorem of Tate
\cite[Theorem~1]{tate_driebergen}
which holds also when $\fieldchar k > 0$
\cite[Chapter~XIII, \S5, Lemma~1]{serre79}.
\end{proof}

\begin{theorem} \label{thm14}
Let $T$ be a finite type $k$-group of multiplicative type.
If $\fieldchar k = 0$ or $T$ is finite and reduced
then the cup-product pairing
\[
\tn H^2(W_k,X^*(T))\otimes
\tn H^0\bigl(W_k,\Hom(X^*(T),\cal O_{\bar\ell}^\times)\bigr) \to\Q/\Z
\]
induces a canonical identification
\[
\tn H^2(W_k,X^*(T))\simeq \Hom_\tn{cts}(T(k)^\HC,\C^\times).
\]
\end{theorem}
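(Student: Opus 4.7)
The plan is to deduce the theorem from Karpuk's generalization of Tate duality to the Weil group, proved in his thesis. Karpuk establishes that the cup-product pairing in question is a perfect pairing of locally compact abelian groups when $\fieldchar k = 0$ and $T$ is a torus; the same method handles finite reduced $T$ in arbitrary residue characteristic. Consequently the substance of the proof is twofold: to identify $\tn H^0(W_k, \Hom(X^*(T), \cal O_{\bar\ell}^\times))$ with $T(k)^\HC$ when $T$ is a torus, and to verify that the resulting identification of $\tn H^2(W_k, X^*(T))$ with a character group agrees with the coboundary identification of \Cref{sec:cc:hc}, so that the isomorphism really is canonical.

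For the first point, the evaluation pairing identifies $\Hom(X^*(T), \bar\ell^\times)$ with $T(\bar\ell)$ equivariantly with respect to the natural $W_k$-actions, once $W_k$ acts on $\bar\ell$ through $\Gamma_\ell = I_k$ together with a topological Frobenius. Choosing a finite Galois splitting field $\ell_0$ of $T$, the subgroup $W_{\ell_0}$ acts trivially on $X^*(T)$, so by \Cref{thm15} a $W_k$-equivariant homomorphism $X^*(T) \to \bar\ell^\times$ factors through $\ell_0^\times$ and defines an element of $T(\ell_0)^{\Gamma_{\ell_0/k}} = T(k)$. The further integrality requirement translates to the condition that every geometric character of $T$ take integral-unit values at the point; applying this to orbit sums $\alpha = \sum_\sigma \sigma\chi$, which are $k$-rational, and using the fact that $\ord_k N_{\ell/k}(x) = 0$ if and only if $x \in \cal O_\ell^\times$, one sees that the condition on geometric characters is equivalent to the condition on $k$-rational characters, which is precisely the definition of $T(k)^\HC$.

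The harder step is matching the two identifications of $\tn H^2(W_k, X^*(T))$. I would trace a finite-order character of $T(k)^\HC$ through both constructions, lifting it via the local Langlands correspondence to a class in $\tn H^1(W_k, \widehat T)$ and then pushing this class forward to $\tn H^2(W_k, X^*(T))$ both by the coboundary of the exponential sequence used in \Cref{thm30} and by Karpuk's cup product. The main obstacle will be this comparison: the two constructions invoke slightly different long exact sequences, and reconciling their sign and duality conventions requires care. Once the torus case is settled, the general characteristic-zero case reduces to it by embedding $T$ into a short exact sequence of multiplicative-type groups whose flanking terms are tori and applying the five-lemma to the resulting long exact sequences on both sides; the finite reduced case in arbitrary residue characteristic reduces, via the Hochschild-Serre spectral sequence for $I_k \subseteq W_k$ and the one-dimensional cohomology of $W_k/I_k = \Z$, to classical Tate duality for $\Gamma_k$-modules of order prime to the residue characteristic.
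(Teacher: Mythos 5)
Your first two steps coincide with the paper's proof: invoke Karpuk's Weil--Tate duality, then identify $\Hom(X^*(T),\cal O_{\bar\ell}^\times)^{W_k}$ with $T(k)^\HC$ using the valuation sequence for $\bar\ell^\times$ and \Cref{thm15} (your orbit-sum argument is the same computation the paper leaves implicit). But your plan has two genuine problems. First, the step you single out as ``the harder step'' --- checking that the cup-product identification agrees with the coboundary identification of \Cref{sec:cc:hc} --- is not part of the statement: the theorem only asserts that the pairing induces an identification. Worse, the expected agreement is false: the paper compares the two identifications later, in \Cref{thm18}, and the comparison square \emph{anti}commutes (the composite with the Langlands correspondence is inversion). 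So the step is both unnecessary for \Cref{thm14} and, as you have framed it, aimed at a statement that fails; leaving it at ``requires care'' means the part of your proof you consider essential is unresolved.

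Second, your reduction in characteristic zero from tori to general multiplicative-type $T$ is not justified. Embedding $T$ in a sequence $1\to T\to S\to R\to 1$ with $S,R$ tori does give a short exact sequence of character lattices and hence a long exact sequence in $\tn H^\bullet(W_k,X^*(-))$, but there is no corresponding exact sequence on the other side of the comparison: $T(k)\to S(k)\to R(k)$ is not exact (there are $\tn H^1$ obstructions), $S(k)^\HC\to R(k)^\HC$ need not be surjective, and so the groups $\Hom_\tn{cts}((-)(k)^\HC,\C^\times)$ do not assemble into a long exact sequence to which the five lemma could be applied. The paper avoids this entirely because Karpuk's Proposition~3.3.5 is already stated for arbitrary finite-type groups of multiplicative type over a $p$-adic field, so no dévissage to tori is needed; you have understated the scope of the input. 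Likewise, your parenthetical claim that ``the same method handles finite reduced $T$ in arbitrary residue characteristic'' is unsupported (Karpuk assumes $\fieldchar k=0$); the paper instead uses Deligne's lemma that inflation $\tn H^2(\Gamma\!_k,X^*(T))\to\tn H^2(W_k,X^*(T))$ is an isomorphism for finite modules, reducing to classical Tate duality, with $T(k)^\HC=T(k)=T(k)_\tn{pro}$ for finite $T$. Your Hochschild--Serre sketch points in the same direction but should be carried out, or replaced by that citation.
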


Here we interpret $\Q/\Z$ as a subgroup of~$\C^\times$
via the exponential map $t\mapsto\exp(2\pi it)$.

\begin{proof}
Let $\bar k$ be the separable closure of~$k$ in~$\bar\ell$.
First, assume $\fieldchar k=0$.
Karpuk showed that the topological groups
$\tn H^2(W_k,X^*(T))$
and $\Hom(X^*(T),\cal O_{\bar\ell}^\times)^{W_k}$
are Pontryagin dual to each other
\cite[Proposition~3.3.5]{karpuk13}. 
The split short exact sequence
for the valuation on $\bar\ell^\times$
gives rise to an exact sequence
describing Karpuk's group as a kernel:
\begin{center}
\begin{tikzcd}
1 \rar &
\Hom(X^*(T),\cal O_{\bar\ell}^\times)^{W_k} \rar &
\Hom(X^*(T),\bar\ell^\times)^{W_k} \rar &
\Hom(X^*(T),\Z)^{W_k}.
\end{tikzcd}
\end{center}
Since the action of $W_k$ on $X^*(T)$ has finite orbits,
any $W_k$-equivariant homomorphism $X^*(T)\to\bar\ell^\times$
factors through~$\bar k^\times$
by \Cref{thm15}.
Hence we may rewrite the sequence as
\begin{center}
\begin{tikzcd}
1 \rar &
\Hom(X^*(T),\cal O_{\bar\ell}^\times)^{W_k} \rar &
T(k) \rar & X_*(T).
\end{tikzcd}
\end{center}
So the lefthand group is the Harish-Chandra subgroup of~$T(k)$.

When $T$ is finite and reduced,
the fact \cite[Lemma~4.2.1]{deligne76} that the inflation map
\[
\tn H^2(\Gamma\!_k,X^*(T)) \to \tn H^2(W_k,X^*(T))
\]
is an isomorphism
reduces the problem to Tate duality for finite modules.
\end{proof}

Our next goal is to compare the description
of \Cref{thm14} with our earlier coboundary description
when $T$ is a torus.
Tate duality is the intermediary in the comparison.
Let $T(k)_\tn{pro}$ denote
the profinite completion of~$T(k)$.
Since $T(k)^\HC$ is compact,
it is a subgroup of $T(k)_\tn{pro}$.

\begin{lemma} \label{thm17}
Let $T$ be a $k$-torus.
The following square commutes,
where the horizontal arrows are
the cup-product pairings
and the vertical arrows are restriction.
\begin{center}
\begin{tikzcd}
\tn H^2(\Gamma\!_k,X^*(T)) \rar{\simeq}
\dar &
\Hom_\tn{cts}(T(k)_\tn{pro},\C^\times)
\dar[twoheadrightarrow] \\
\tn H^2(W_k,X^*(T)) \rar{\simeq} &
\Hom_\tn{cts}(T(k)^\HC,\C^\times).
\end{tikzcd}
\end{center}
\end{lemma}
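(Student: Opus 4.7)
The commutativity will follow from naturality of cup products along the continuous homomorphism $\iota\colon W_k\to\Gamma\!_k$. The top horizontal arrow is induced by the classical Tate cup product
\[
\tn H^2(\Gamma\!_k,X^*(T))\otimes T(\bar k)^{\Gamma\!_k} \xto{\cup} \tn H^2(\Gamma\!_k,\bar k^\times)
\xto{\tn{inv}_\Gamma} \Q/\Z,
\]
with the right factor identified with $T(k)$, while the bottom is the cup product used in \Cref{thm14}, whose right factor is identified with $T(k)^\HC$ via the proof of that theorem.

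First I would fix an embedding $\bar k\into\bar\ell$ covering $k\into\ell$ and observe, using the computation in the proof of \Cref{thm14}, that an element $x\in T(k)^\HC\subseteq T(k)=T(\bar k)^{\Gamma\!_k}$ maps under $\bar k^\times\into\bar\ell^\times$ to a $W_k$-equivariant homomorphism $X^*(T)\to\cal O_{\bar\ell}^\times$. Naturality of cup product then yields a commutative square
\begin{center}
\begin{tikzcd}
\tn H^2(\Gamma\!_k,X^*(T))\otimes T(k) \dar \rar{\cup} &
\tn H^2(\Gamma\!_k,\bar k^\times) \dar \\
\tn H^2(W_k,X^*(T))\otimes T(k)^\HC \rar{\cup} &
\tn H^2(W_k,\cal O_{\bar\ell}^\times)
\end{tikzcd}
\end{center}
in which the left vertical arrow is~$\iota^*$ on the first factor and restriction of characters on the second.

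The remaining step is to verify that the Tate invariant~$\tn{inv}_\Gamma$ and Karpuk's map $\tn H^2(W_k,\cal O_{\bar\ell}^\times)\to\Q/\Z$ agree under the right vertical arrow of the square above. This is the compatibility of Weil-Tate duality with classical Tate duality implicit in Karpuk's thesis: for finite coefficients the inflation isomorphism $\tn H^2(\Gamma\!_k,-)\simeq\tn H^2(W_k,-)$ of \cite[Lemma~4.2.1]{deligne76} identifies the two dualities on the nose, and the general torus case reduces to the finite case by a dimension-shifting argument using the vanishing of \Cref{thm23}. The main obstacle will be tracing Karpuk's construction carefully enough to extract this compatibility of invariant maps; once it is established, chasing a pair $(c,x)$ around the original square is formal, converting the equality of cup-product values into the equality of restricted characters via Pontryagin duality.
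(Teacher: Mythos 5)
Your proposal takes essentially the same route as the paper: its entire proof is the observation that the inclusion $T(k)^\HC\into T(k)_\tn{pro}$ is the map on $\tn H^0$ induced by $\cal O_{\bar\ell}^\times\into\bar\ell^\times$ (using \Cref{thm15} to identify the $\Gamma\!_k$-invariants with $T(k)$), with naturality of the cup product and the compatibility of Karpuk's pairing with classical Tate duality left implicit. Your added attention to matching the two invariant maps only makes the argument more complete; the one cosmetic fix needed is that your auxiliary square should land both cup products in $\tn H^2(W_k,\bar\ell^\times)$, since there is no direct map $\tn H^2(\Gamma\!_k,\bar k^\times)\to\tn H^2(W_k,\cal O_{\bar\ell}^\times)$.
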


\begin{proof}
It suffices to observe that
the subgroup inclusion $T(k)^\HC\into T(k)_\tn{pro}$
can be identified with the map
\[
\tn H^0(W_k,\Hom(X^*(T),\cal O_{\bar\ell}^\times))
\to\tn H^0(\Gamma\!_k,\Hom(X^*(T),\bar\ell^\times))
\]
induced by inclusion
$\cal O_{\bar\ell}^\times \into\bar\ell^\times$.
\Cref{thm15} identifies the righthand $\tn H^0$ group
with $T(k)$.
\end{proof}

Tate duality describes $\tn H^2(\Gamma\!_k,X^*(T))$
as the character group of~$T(k)_\tn{pro}$.
Moreover, the extension
of Artin reciprocity to tori
describes $\tn H^1(\Gamma\!_k,\widehat T)$
as the same character group.
The two comparisons are related
by the coboundary map
for the exponential exact sequence
\begin{center}
\begin{tikzcd}
1 \rar &
X^*(T) \rar &
X^*(T)_\C \rar &
\widehat T \rar & 1.
\end{tikzcd}
\end{center}
There is a twist, however:
the composite isomorphism below
is inversion \cite[Section~8.2]{gross_reeder10}.
\[
\Hom_\tn{cts}(T(k)_\tn{pro})
\simeq \tn H^1(\Gamma\!_k,\widehat T)
\to \tn H^2(\Gamma\!_k,X^*(T))
\simeq \Hom_\tn{cts}(T(k)_\tn{pro})
\]

\begin{lemma} \label{thm18}
Let $T$ be a $k$-torus.
The following square anticommutes, where
the top arrow is the (profinitely completed)
Langlands correspondence,
the bottom arrow is from \Cref{thm14},
the left arrow is the exponential coboundary,
and the right arrow is restriction.
\begin{center}
\begin{tikzcd}
\tn H^1(W_k,\widehat T) \rar{\simeq}
\dar &
\Hom_\tn{cts}(T(k),\C^\times)
	\dar[twoheadrightarrow] \\
\tn H^2(W_k,X^*(T)) \rar{\simeq} &
\Hom_\tn{cts}(T(k)^\HC,\C^\times).
\end{tikzcd}
\end{center}
\end{lemma}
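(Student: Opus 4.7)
The plan is to reduce the claimed anticommutativity to the parallel statement for the absolute Galois group, which holds by the remark (due to Gross and Reeder) immediately preceding the lemma. I would embed the Weil square into a cube whose back face is the Galois analog, with vertices $\tn H^1(\Gamma\!_k,\widehat T)$, $\Hom_\tn{cts}(T(k)_\tn{pro},\C^\times)$, $\tn H^2(\Gamma\!_k,X^*(T))$, and $\Hom_\tn{cts}(T(k)_\tn{pro},\C^\times)$, with edges given by the Langlands correspondence for $\Gamma\!_k$-tori, the exponential coboundary, and the classical Tate pairing. The vertical (back-to-front) edges are restriction on cohomology and, on the right, restriction of characters along $T(k)^\HC\into T(k)\to T(k)_\tn{pro}$. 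The four side faces commute: the top face is naturality of the local Langlands correspondence for tori with respect to $W_k\subseteq\Gamma\!_k$; the left face is naturality of coboundary maps; the bottom face is \Cref{thm17}; and the right face is a triviality.

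Writing $\alpha,\beta\colon\tn H^1(W_k,\widehat T)\to\Hom_\tn{cts}(T(k)^\HC,\C^\times)$ for the top-right and bottom-left paths around the Weil square, I must show $\alpha(\chi)\cdot\beta(\chi)=1$ for every $\chi$. I verify this on the two pieces of the exact sequence
\[
\tn H^1(W_k, X^*(T)_\C) \to \tn H^1(W_k,\widehat T) \to \tn H^2(W_k, X^*(T)) \to 0
\]
from \Cref{thm23}. On classes in the image of the first arrow, $\beta$ is trivial by exactness and $\alpha$ is trivial because by \Cref{thm30} the LLC sends such classes to unramified characters, which vanish on $T(k)^\HC$. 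For $\chi$ restricted from some $\tilde\chi\in\tn H^1(\Gamma\!_k,\widehat T)$, the side-face commutativity of the cube identifies $\alpha(\chi)\cdot\beta(\chi)$ with the restriction to $T(k)^\HC$ of the analogous back-face product for~$\tilde\chi$, which vanishes by the Galois statement.

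These two subgroups together span $\tn H^1(W_k,\widehat T)$ provided that restriction $\tn H^2(\Gamma\!_k,X^*(T))\to\tn H^2(W_k,X^*(T))$ is surjective, since then any $\chi\in\tn H^1(W_k,\widehat T)$ differs from the restriction of a Galois class by an unramified one. Via \Cref{thm17} this surjectivity corresponds to surjectivity of restriction of characters from $T(k)_\tn{pro}$ to its closed subgroup $T(k)^\HC$ --- i.e., Pontryagin duality --- and in fact this surjection is precisely the right-hand vertical arrow appearing in the statement of \Cref{thm17}. The only step I expect to require real care is checking the commutativity of the bottom face of the cube, namely the assertion that Karpuk's Weil--Tate pairing refines the classical Tate pairing; fortunately this is exactly \Cref{thm17}, so nothing new is required.
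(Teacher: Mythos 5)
Your proposal is correct and takes essentially the same route as the paper: both reduce to the Galois-level inversion statement via \Cref{thm17} and the naturality of the Langlands correspondence, decompose an arbitrary class in $\tn H^1(W_k,\widehat T)$ as an inflated Galois class times a class coming from $\tn H^1(W_k,X^*(T)_\C)$ (using the surjectivity of $\tn H^2(\Gamma\!_k,X^*(T))\to\tn H^2(W_k,X^*(T))$ obtained from \Cref{thm17} and Pontryagin duality), and kill the latter piece because it maps to unramified characters, which are trivial on $T(k)^\HC$. The only minor difference is that you cite \Cref{thm30} directly for this last vanishing, whereas the paper re-runs the same induced-torus/Shapiro argument; your shortcut is legitimate.
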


\begin{proof}
Consider the following extension of the given diagram
in which the left square commutes by
the compatibility of inflation and coboundary.
\begin{center}
\begin{tikzcd}
\tn H^1(\Gamma\!_k,\widehat T)
	\rar[hookrightarrow]\dar{\simeq} &
\tn H^1(W_k,\widehat T) \rar{\simeq}\dar &
\Hom_\tn{cts}(T(k),\C^\times)
	\dar[twoheadrightarrow] \\
\tn H^2(\Gamma\!_k,X^*(T))
	\rar[twoheadrightarrow] &
\tn H^2(W_k,X^*(T)) \rar &
\Hom_\tn{cts}(T(k)^\HC,\C^\times).
\end{tikzcd}
\end{center}
By \Cref{thm17} and the compatibility
between Tate duality and
the local Langlands correspondence,
the right square becomes commutative
after restriction along
the upper left arrow.
We'll use this to deduce commutativity
of the right square by a diagram chase.

The bottom left arrow
is surjective by \Cref{thm17}
and exactness of Pontryagin duality.
Starting with an element $c\in\tn H^1(W_k,\widehat T)$,
move it clockwise around the left square,
choosing a lift along the bottom left arrow.
We thereby produce an element
$c'\in\tn H^1(W_k,\widehat T)$
in the image of $\tn H^1(\Gamma\!_k,\widehat T)$.
The elements $c$ and~$c'$ have the same image
in~$\tn H^2(W_k,X^*(T))$,
hence differ by an element in the kernel
of the middle vertical arrow,
or in other words, by an element
in the image of $\tn H^1(W_k,X^*(T)_\C)$.
To prove commutativity of the right square,
it therefore suffices to show that
the following composition
$f_T\colon\tn H^1(W_k,X^*(T))
\to\Hom_\tn{cts}(T(k)^\HC,\C^\times)$
vanishes:
\begin{center}
\begin{tikzcd}
\tn H^1(W_k,X^*(T)_\C) \rar &
\tn H^1(W_k,\widehat T) \rar{\simeq} &
\Hom_\tn{cts}(T(k),\C^\times) \rar[twoheadrightarrow] &
\Hom_\tn{cts}(T(k)^\HC,\C^\times).
\end{tikzcd}
\end{center}

For this, we use the same proof strategy
as in the proof of \Cref{thm30}.
When $T=\G_{\tn m}$, the image of the first map
in $\Hom_\tn{cts}(k^\times,\C^\times)$
is the group of unramified characters
and the claim is clear.
It follows from Shapiro's lemma
that $f_T$ vanishes for any torus~$T$
that is induced, in other words,
a product of Weil restrictions of split tori.
For a general torus~$T$,
choose an embedding $T\into S$ into
an induced torus~$S$
and let $R$ denote the cokernel, a third torus.
This embedding yields the commutative square
\begin{center}
\begin{tikzcd}
\tn H^1(W_k,X^*(S)_\C) \rar{f_S}\dar &
\Hom_\tn{cts}(S(k)^\HC,\C^\times) \dar \\
\tn H^1(W_k,X^*(T)_\C) \rar{f_T} &
\Hom_\tn{cts}(T(k)^\HC,\C^\times).
\end{tikzcd}
\end{center}
Since $\tn H^2(W_k,X^*(R)_\C) = 0$
by \Cref{thm23},
the left arrow is surjective.
We can thus deduce the vanishing of~$f_T$
from the vanishing of~$f_S$.
\end{proof}

\bibliography{orntp.bib}

\providecommand{\bysame}{\leavevmode\hbox to3em{\hrulefill}\thinspace}
\providecommand{\MR}{\relax\ifhmode\unskip\space\fi MR }
\providecommand{\MRhref}[2]{%
  \href{http://www.ams.org/mathscinet-getitem?mr=#1}{#2}
}
\providecommand{\href}[2]{#2}
\begin{thebibliography}{Moo76b}

\bibitem[ABS64]{atiyah_bott_shapiro63}
M.~F. Atiyah, R.~Bott, and A.~Shapiro, \emph{Clifford modules}, Topology. An
  International Journal of Mathematics \textbf{3} (1964), 3--38. \MR{167985}

\bibitem[Bor79]{borel_corvallis2}
A.~Borel, \emph{Automorphic {$L$}-functions}, Proceedings of the {S}ymposium in
  {P}ure {M}athematics of the {A}merican {M}athematical {S}ociety
  ({T}wenty-fifth {S}ummer {R}esearch {I}nstitute) held at {O}regon {S}tate
  {U}niversity, {C}orvallis, {O}re., {J}uly 11--{A}ugust 5, 1977 (A.~Borel and
  W.~Casselman, eds.), Proceedings of Symposia in Pure Mathematics, vol. 33,
  part 2, American Mathematical Society, Providence, R.I., 1979, pp.~27--61.
  \MR{546608}

\bibitem[Bou68]{bourbaki_lie4-6}
N.~Bourbaki, \emph{\'{E}l\'{e}ments de math\'{e}matique. {F}asc. {XXXIV}.
  {G}roupes et alg\`ebres de {L}ie. {C}hapitre {IV}: {G}roupes de {C}oxeter et
  syst\`emes de {T}its. {C}hapitre {V}: {G}roupes engendr\'{e}s par des
  r\'{e}flexions. {C}hapitre {VI}: {S}yst\`emes de racines}, Actualit\'{e}s
  Scientifiques et Industrielles, No. 1337, Hermann, Paris, 1968. \MR{0240238}

\bibitem[Bou82]{bourbaki_lie9}
Nicolas Bourbaki, \emph{\'{E}l\'{e}ments de math\'{e}matique. {F}asc. {XXXIV}.
  {G}roupes et alg\`ebres de {L}ie. {C}hapitre {IX}. {G}roupes de {L}ie
  r\'{e}els compacts}, Masson, Paris, 1982. \MR{682756}

\bibitem[Con14]{conrad14}
Brian Conrad, \emph{Reductive group schemes}, Autour des sch{\'e}mas en
  groupes. {V}ol. {I}, Panoramas et Synth{\`e}ses, vol. 42/43, Soci{\'e}t{\'e}
  Math{\'e}matique de France, Paris, 2014, pp.~93--444. \MR{3362641}

\bibitem[CS19]{clausen_scholze19}
Dustin Clausen and Peter Scholze, \emph{Lectures on {C}ondensed {M}athematics},
  2019.

\bibitem[Del76]{deligne76}
Pierre Deligne, \emph{Les constantes locales de l'\'{e}quation fonctionnelle de
  la fonction {$L$} d'{A}rtin d'une repr\'{e}sentation orthogonale},
  Inventiones Mathematicae \textbf{35} (1976), 299--316. \MR{506172}

\bibitem[Fla08]{flach08}
M.~Flach, \emph{Cohomology of topological groups with applications to the
  {W}eil group}, Compositio Mathematica \textbf{144} (2008), no.~3, 633--656.
  \MR{2422342}

\bibitem[Fr{\"{o}}85]{frohlich85}
A.~Fr{\"{o}}hlich, \emph{Orthogonal representations of {G}alois groups,
  {S}tiefel-{W}hitney classes and {H}asse-{W}itt invariants}, Journal f\"{u}r
  die Reine und Angewandte Mathematik. [Crelle's Journal] \textbf{360} (1985),
  84--123. \MR{799658}

\bibitem[GKT89]{gunarwardena_kahn_thomas89}
J.~Gunarwardena, B.~Kahn, and C.~Thomas, \emph{Stiefel-{W}hitney classes of
  real representations of finite groups}, Journal of Algebra \textbf{126}
  (1989), no.~2, 327--347. \MR{1024996}

\bibitem[GR10]{gross_reeder10}
Benedict~H. Gross and Mark Reeder, \emph{Arithmetic invariants of discrete
  {L}anglands parameters}, Duke Mathematical Journal \textbf{154} (2010),
  no.~3, 431--508. \MR{2730575}

\bibitem[Hat02]{hatcher02}
Allen Hatcher, \emph{Algebraic topology}, Cambridge University Press,
  Cambridge, 2002. \MR{1867354}

\bibitem[HBP79]{heath-brown_patterson79}
D.~R. Heath-Brown and S.~J. Patterson, \emph{The distribution of {K}ummer sums
  at prime arguments}, Journal f\"{u}r die Reine und Angewandte Mathematik.
  [Crelle's Journal] \textbf{310} (1979), 111--130. \MR{546667}

\bibitem[HII08a]{hiraga_ichino_ikeda08erratum}
Kaoru Hiraga, Atsushi Ichino, and Tamotsu Ikeda, \emph{Correction to:
  ``{F}ormal degrees and adjoint {$\gamma$}-factors'' [{J}. {A}mer. {M}ath.
  {S}oc. {\bf 21} (2008), no. 1, 283--304; mr2350057]}, Journal of the
  {A}merican Mathematical Society \textbf{21} (2008), no.~4, 1211--1213.
  \MR{2425185}

\bibitem[HII08b]{hiraga_ichino_ikeda08}
\bysame, \emph{Formal degrees and adjoint {$\gamma$}-factors}, Journal of the
  {A}merican Mathematical Society \textbf{21} (2008), no.~1, 283--304, see also
  \cite{hiraga_ichino_ikeda08erratum}. \MR{2350057}

\bibitem[Kar13]{karpuk13}
David~A. Karpuk, \emph{Cohomology of the {W}eil group of a {$p$}-adic field},
  Journal of Number Theory \textbf{133} (2013), no.~4, 1270--1288. \MR{3003999}

\bibitem[Knu92]{knuth92}
Donald~E. Knuth, \emph{Two notes on notation}, American Mathematical Monthly
  \textbf{99} (1992), no.~5, 403--422. \MR{1163629}

\bibitem[Lap04]{lapid04}
Erez~M. Lapid, \emph{On the root number of representations of orthogonal type},
  Compositio Mathematica \textbf{140} (2004), no.~2, 274--286. \MR{2027189}

\bibitem[Lic09]{lichtenbaum09}
Stephen Lichtenbaum, \emph{The {W}eil-\'{e}tale topology for number rings},
  Annals of Mathematics. Second Series \textbf{170} (2009), no.~2, 657--683.
  \MR{2552104}

\bibitem[Mac57]{mackey57}
George~W. Mackey, \emph{Les ensembles bor\'{e}liens et les extensions des
  groupes}, Journal de Math\'{e}matiques Pures et Appliqu\'{e}es. Neuvi\`eme
  S\'{e}rie \textbf{36} (1957), 171--178. \MR{89998}

\bibitem[May99]{may99}
J.~P. May, \emph{A concise course in algebraic topology}, Chicago Lectures in
  Mathematics, University of Chicago Press, Chicago, IL, 1999. \MR{1702278}

\bibitem[Mit11]{mitchell11}
Stephen~A. Mitchell, \emph{Notes on principal bundles and classifying spaces},
  2011, \url{https://sites.math.washington.edu/~mitchell/Notes/prin.pdf}.

\bibitem[Moo64a]{moore64a}
Calvin~C. Moore, \emph{Extensions and low dimensional cohomology theory of
  locally compact groups. {I}}, Transactions of the American Mathematical
  Society \textbf{113} (1964), 40--63. \MR{171880}

\bibitem[Moo64b]{moore64b}
\bysame, \emph{Extensions and low dimensional cohomology theory of locally
  compact groups. {II}}, Transactions of the American Mathematical Society
  \textbf{113} (1964), 64--86. \MR{171880}

\bibitem[Moo76a]{moore76a}
\bysame, \emph{Group extensions and cohomology for locally compact groups.
  {III}}, Transactions of the American Mathematical Society \textbf{221}
  (1976), no.~1, 1--33. \MR{414775}

\bibitem[Moo76b]{moore76b}
\bysame, \emph{Group extensions and cohomology for locally compact groups.
  {IV}}, Transactions of the American Mathematical Society \textbf{221} (1976),
  no.~1, 35--58. \MR{414776}

\bibitem[MS74]{milnor_stasheff74}
John~W. Milnor and James~D. Stasheff, \emph{Characteristic classes}, Princeton
  University Press, Princeton, N. J.; University of Tokyo Press, Tokyo, 1974,
  Annals of Mathematics Studies, No. 76. \MR{0440554}

\bibitem[NSW08]{neukirch_schmidt_wingberg08}
J\"{u}rgen Neukirch, Alexander Schmidt, and Kay Wingberg, \emph{Cohomology of
  number fields}, second ed., Grundlehren der Mathematischen Wissenschaften
  [Fundamental Principles of Mathematical Sciences], vol. 323, Springer-Verlag,
  Berlin, 2008. \MR{2392026}

\bibitem[Pet50]{pettis50}
B.~J. Pettis, \emph{On continuity and openness of homomorphisms in topological
  groups}, Annals of Mathematics. Second Series \textbf{52} (1950), 293--308.
  \MR{38358}

\bibitem[Ros09]{rosendal09}
Christian Rosendal, \emph{Automatic continuity of group homomorphisms}, The
  Bulletin of Symbolic Logic \textbf{15} (2009), no.~2, 184--214. \MR{2535429}

\bibitem[Sam52]{samelson52}
Hans Samelson, \emph{Topology of {L}ie groups}, Bulletin of the American
  Mathematical Society \textbf{58} (1952), 2--37. \MR{45129}

\bibitem[Sel16]{sella16}
Yehonatan Sella, \emph{Comparison of sheaf cohomology and singular cohomology},
  2016.

\bibitem[Ser79]{serre79}
Jean-Pierre Serre, \emph{Local fields}, Graduate Texts in Mathematics, vol.~67,
  Springer-Verlag, 1979, Translated from the French by Marvin Jay Greenberg.
  \MR{554237}

\bibitem[Ser01]{serre01}
\bysame, \emph{Complex semisimple {L}ie algebras}, Springer Monographs in
  Mathematics, Springer-Verlag, Berlin, 2001, Translated from the French by G.
  A. Jones, Reprint of the 1987 edition. \MR{1808366}

\bibitem[Ser02]{serre02}
\bysame, \emph{Galois cohomology}, english ed., Springer Monographs in
  Mathematics, Springer-Verlag, Berlin, 2002, Translated from the French by
  Patrick Ion and revised by the author. \MR{1867431}

\bibitem[Sta78]{stasheff78}
James~D. Stasheff, \emph{Continuous cohomology of groups and classifying
  spaces}, Bulletin of the American Mathematical Society \textbf{84} (1978),
  no.~4, 513--530. \MR{494071}

\bibitem[Tat67]{tate_driebergen}
J.~T. Tate, \emph{{$p$}-divisible groups}, Proceedings of a {C}onference on
  {L}ocal {F}ields ({D}riebergen, 1966), Springer, Berlin, 1967, pp.~158--183.
  \MR{0231827}

\bibitem[Wig73]{wigner73}
David Wigner, \emph{Algebraic cohomology of topological groups}, Transactions
  of the American Mathematical Society \textbf{178} (1973), 83--93. \MR{338132}

\end{thebibliography}
\bibliographystyle{amsalpha}

\end{document}